\DeclareMathAlphabet{\pazocal}{OMS}{zplm}{m}{n}
\newtheorem{theorem}{Theorem}[section]
\newtheorem{lemma}[theorem]{Lemma}
\newtheorem{proposition}[theorem]{Proposition}
\newtheorem{corollary}[theorem]{Corollary}
 \newtheorem{main}{Theorem}
  \newtheorem{cmain}[main]{Corollary}
\theoremstyle{definition}
\newtheorem{definition}[theorem]{Definition}
\newtheorem{example}[theorem]{Example}
\newenvironment{alternativetheorem}[1]
  {\innercustomthm}
  {\endinnercustomthm}
\theoremstyle{remark}
\newtheorem{remark}[theorem]{Remark}
\numberwithin{equation}{section}
\newcommand{\R}{\ensuremath{\mathbb{R}}}
\newcommand{\N}{\ensuremath{\mathbb{N}}}
\renewcommand{\c}{ {\mathbf{c}}}
\renewcommand{\d}{ {\mathbf{d}}}
\newcommand{\bb}{\mathcal{B}}
\renewcommand{\u}{\ensuremath{\pazocal{U}}}
\newcommand{\ub}{\mathcal{U}}
\newcommand{\hub}{{\mathcal{\hat U}}}
\newcommand{\us}{\mathbf{U}}
\newcommand{\vs}{ {\mathbf{V}}}
\newcommand{\set}[1]{\left\{#1\right\}}
\newcommand{\la}{\lambda}
\newcommand{\ga}{\gamma}
\newcommand{\ep}{\varepsilon}
\newcommand{\f}{\infty}
\newcommand{\de}{\delta}
\newcommand{\al}{\alpha}
\newcommand{\lle}{\preccurlyeq}
\newcommand{\lge}{\succcurlyeq}
\newcommand{\si}{\sigma}
\newcommand{\ra}{\rightarrow}
\begin{document}

\title{Bifurcation sets arising from non-integer base expansions}

\author{Pieter Allaart}
\address[P. Allaart]{Mathematics Department, University of North Texas, 1155 Union Cir \#311430, Denton, TX 76203-5017, U.S.A.}
\email{allaart@unt.edu}

\author{Simon Baker}
\address[S. Baker]{Mathematics institute, University of Warwick, Coventry, CV4 7AL, UK}
\email{simonbaker412@gmail.com}

\author{Derong Kong}
\address[D. Kong]{Mathematical Institute, University of Leiden, PO Box 9512, 2300 RA Leiden, The Netherlands}
\email[Corresponding author]{d.kong@math.leidenuniv.nl}

\date{\today}
\dedicatory{}


\subjclass[2010]{Primary:11A63, Secondary: 37B10, 28A78}

\begin{abstract}
Given a positive integer $M$ and $q\in(1,M+1]$, let $\pazocal U_q$ be the set of $x\in[0, M/(q-1)]$ having a unique $q$-expansion: there exists a unique sequence $(x_i)=x_1x_2\ldots$ with each $x_i\in\set{0,1,\ldots, M}$ such that
 \[
 x=\frac{x_1}{q}+\frac{x_2}{q^2}+\frac{x_3}{q^3}+\cdots.
 \]
 Denote by $\mathbf U_q$ the set of  corresponding sequences of  all points in  $\pazocal U_q$.
 It is well-known that the function $H: q\mapsto h(\mathbf U_q)$ is a Devil's staircase, where $h(\mathbf U_q)$ denotes the topological entropy of $\mathbf U_q$. In this paper we {give several characterizations of} the bifurcation set
 \[
 \mathcal B:=\set{q\in(1,M+1]: H(p)\ne H(q)\textrm{ for any }p\ne q}.
 \]
Note that $\mathcal B$ is contained in the set {$\mathcal{U}$} of bases $q\in(1,M+1]$ such that $1\in\pazocal U_q$. By using {a} transversality technique we also calculate the Hausdorff dimension of the difference {$\mathcal{U}\setminus\mathcal B$}. Interestingly this quantity is always strictly between $0$ and $1$. When $M=1$ the Hausdorff dimension of {$\mathcal{U}\setminus\mathcal B$} is $\frac{\log 2}{3\log \la^*}\approx 0.368699$, where $\la^*$ is the unique root in $(1, 2)$ of the equation $x^5-x^4-x^3-2x^2+x+1=0$. 
\end{abstract}
\keywords{Bifurcation sets; Univoque sets; Univoque bases; Hausdorff dimensions.}
\maketitle

\section{Introduction}\label{s1}
Fix a positive integer $M$. 
For $q\in(1,M+1]$, a sequence $(x_i)=x_1x_2\ldots$ with each ${x_i}\in\set{0,1,\ldots, M}$ is called a \emph{$q$-expansion} of $x$ if
\begin{equation}\label{e11}
x=\sum_{i=1}^\f\frac{x_i}{q^i}=:\pi_q((x_i)).
\end{equation}
Here the \emph{alphabet} $\set{0,1,\ldots, M}$ will be fixed throughout the paper. 
Clearly, $x$ has a $q$-expansion if and only if $x\in I_q:=[0, M/(q-1)]$. When $q=M+1$ we know that each $x\in I_{M+1}=[0, 1]$ has a unique $(M+1)$-expansion except for countably many points, which  have precisely two expansions. When $q\in(1,M+1)$ the set of expansions of an $x\in I_q$ can be much more complicated. Sidorov showed in \cite{Sidorov_2007} that Lebesgue almost every $x\in I_q$ has a continuum of $q$-expansions. Therefore, the set of $x\in I_q$ with a unique $q$-expansion is negligible in the sense of Lebesgue measure. On the other hand, the {third} author and his coauthors showed in \cite{Kong_Li_Dekking_2010} ({see also Glendinning and Sidorov \cite{Gle-Sid-01} for the case $M=1$})  that the set of $x\in I_q$ with a unique $q$-expansion   has positive Hausdorff dimension when $q>q_{KL}$, where $q_{KL}=q_{KL}(M)$ is the \emph{Komornik-Loreti constant} (see Section \ref{s2} for more details).

For $q\in(1,M+1]$ let $\u_q$ be the  \emph{univoque set} of $x\in I_q$ having a unique $q$-expansion. This means that for any $x\in\u_q$ there exists a unique sequence $(x_i)\in\set{0,1,\ldots, M}^\N$ such that $x=\pi_q((x_i))$. Denote by
$\us_q=\pi_q^{-1}(\u_q)$ the corresponding set of $q$-expansions.
Note that   $\pi_q$ is a bijection from $\us_q$ to $\u_q$.   So the study of the univoque set $\u_q$ is equivalent  to the study of the \emph{symbolic univoque set} $\us_q$.

{De Vries and Komornik \cite{DeVries_Komornik_2008} discovered an intimate connection between $\u_q$ and the set
\begin{equation} \label{eq:U}
\ub:=\set{q\in(1,M+1]: 1\in\u_q}
\end{equation}
of bases for which the number 1 has a unique expansion. For $M=1$, the set $\ub$ was first studied by Erd\H{o}s et al.~\cite{Erdos_Joo_Komornik_1990,Erdos_Horvath_Joo_1991}. They showed that the set $\ub$ is uncountable,  of first category and of zero Lebesgue measure. Later, Dar\'{o}czy and K\'{a}tai \cite{Daroczy_Katai_1993} proved that the set $\ub$ has full Hausdorff dimension.   Komornik and Loreti \cite{Komornik_Loreti_2007} showed that the topological closure $\overline{\ub}$ is a \emph{Cantor set}: a non-empty perfect set with no interior points. Indeed, for general $M\ge 1$, the above properties of $\ub$ also hold (cf.~\cite{Vries-Komornik-Loreti-2016, Komornik_Kong_Li_2015_1}). Some connections with dynamical systems, continued fractions and even the Mandelbrot set can be found in \cite{Bon-Car-Ste-Giu-2013}. 
}

\subsection{Set-valued bifurcation set $\hub$}

Let ${\Omega:=}\set{0, 1, \ldots, M}^\N$ be the set of all sequences with each element from $\set{0, 1,\ldots, M}$. Then $({\Omega}, \rho)$ is a compact metric space with respect to the metric $\rho$ defined by
\begin{equation}\label{e12}
\rho((c_i), (d_i))=(M+1)^{-\inf\set{j\ge 1: c_j\ne d_j}}.
\end{equation}
Under the metric $\rho$ the Hausdorff dimension of any subset  $E\subseteq {\Omega}$ is  well-defined.

Note that the set-valued map $F: q\mapsto \us_q$ is increasing, i.e., $\us_p\subseteq\us_q$ for any $p, q\in(1, M+1]$ with  $p<q$ (see Section \ref{s2} for more explanation). In \cite{DeVries_Komornik_2008} de Vries and Komornik showed that the map $F$ is locally constant almost everywhere. On the other hand,  the third author and his coauthors proved in \cite{Kong_Li_Lv_Vries2016} that there exist infinitely many  $q\in(1,M+1]$ such that the difference between $\us_q$ and $\us_p$ for any $p\ne q$ is significant: $\us_q\bigtriangleup \us_p$ has positive Hausdorff dimension, where $A\bigtriangleup B=(A\setminus B)\cup(B\setminus A)$ stands for the {symmetric difference} of two sets $A$ and $B$. Let $\hub$ be the \emph{bifurcation set} of the set-valued map $F$,  defined by
\[
\hub=\hub(M):=\set{q\in(1,M+1]: \dim_H(\us_p\bigtriangleup\us_q)>0\textrm{ for any }p\ne q}.
\]
{Compared to the set $\ub$ from \eqref{eq:U}, {we know by \cite[Theorems 1.1 and 1.2]{Kong_Li_Lv_Vries2016}}   that $\hub\subset\ub$ and the difference $\ub\backslash\hub$ is countably infinite. As a result, $\hub$ is a Lebesgue null set of full Hausdorff dimension.} 
Furthermore,
\begin{equation}\label{e13}
(1,M+1]\setminus\hub=(1,q_{KL}]\cup\bigcup[q_0, q_0^*].
\end{equation}
The union on the right hand-side of (\ref{e13}) is pairwise disjoint and countable. By the definition of $\hub$ it follows that each connected component $[q_0, q_0^*]$ is a maximum interval such that the difference $\us_{q_0}\bigtriangleup\us_{q_0^*}=\us_{q_0^*}\setminus\us_{q_0}$ has zero Hausdorff dimension. So the closed interval $[q_0, q_0^*]$ is called a \emph{plateau} of $F$. Indeed, for any $q\in(q_0, q_0^*)$ the difference $\us_q\setminus\us_{q_0}$ is at most countable, and for $q=q_0^*$ the difference $\us_{q_0^*}\setminus\us_{q_0}$ is uncountable but of zero Hausdorff dimension (cf.~\cite[Lemma 3.4]{Kong_Li_Lv_Vries2016}). Furthermore, each left endpoint $q_0$ is an algebraic integer, and each right endpoint $q_0^*$, called a \emph{de Vries-Komornik number}, is a transcendental number (cf.~\cite{Kong_Li_2015}).

Instead of  investigating  the bifurcation set $\hub$ directly,  we consider two modified bifurcation sets:
\begin{align*}
\ub^L=\ub^L(M)&:=\set{q\in(1,M+1]:\dim_H(\us_q\setminus\us_p)>0 \textrm{ for any  }  p\in(1,q)};\\
\ub^R=\ub^R(M)&:=\set{q\in(1,M+1]: \dim_H(\us_r\setminus\us_q)>0 \textrm{ for any }  r\in(q, M+1]}.
\end{align*}
 The sets $\ub^L, \ub^R$ are called the \emph{left bifurcation set} and the \emph{right bifurcation set} of $F$, respectively. {In view of  \cite[Theorem 1.1]{Kong_Li_Lv_Vries2016},  the right bifurcation set $\ub^R$ is equal to the set of univoque bases such that $1$ has a unique expansion, i.e., $\ub^R=\ub$.} Clearly, $\hub\subset\ub^L$ and $\hub\subset\ub^R$. Furthermore,
 \[\ub^L\cap\ub^R=\hub\quad\textrm{and}\quad \ub^L\cup\ub^R=\overline{\hub}.\]
 {By   (\ref{e13}) it follows that  the difference set $\ub^L\setminus\hub$ consists of all left endpoints  of the plateaus in $(q_{KL}, M+1]$ of $F$, and hence it is countable.   Similarly, the difference set $\ub^R\setminus\hub$ consists of all right endpoints of the plateaus of $F$.} Therefore,
{\begin{equation}
 \label{e14}
 \begin{split}
 (1,M+1]\setminus\ub^L&=(1,q_{KL}]\cup\bigcup(q_0, q_0^*],\\
  (1,M+1]\setminus\ub^R&=(1,q_{KL})\cup\bigcup[q_0, q_0^*).
  \end{split}
 \end{equation}
 }

{Since the differences among $\hub, \ub^L$, $\ub^R=\ub$ and $\overline{\ub}$ are at most countable, the dimensional results obtained in this paper for $\ub=\ub^R$ also hold for $\hub, \ub^L$ and $\overline{\ub}$.}

Now we recall from \cite{Kong_Li_Lv_Vries2016} the following characterizations of the left and right bifurcation sets $\ub^L$ and $\ub^R$ respectively.

{\begin{theorem}[\cite{Kong_Li_Lv_Vries2016}]\label{th:11}\mbox{}

 \begin{enumerate}[{\rm(i)}]
 \item  $q\in\ub^L$ if and only if  
 $\dim_H(\ub\cap(p, q))>0$ {for any} $p\in(1,q).$
 
 \item $q\in\ub^R$ if and only if 
$\dim_H(\ub\cap(q, r))>0$ {for any} $ r\in(q, M+1].$
 \end{enumerate}
 
\end{theorem}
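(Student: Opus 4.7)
The plan is to prove (i) and (ii) by establishing, for each open subinterval $J \subset (1, M+1]$, a bi-H\"older (in the logarithmic sense) correspondence between $\ub \cap J$ under the Euclidean metric and a subset of $\us_q \setminus \us_p$ (resp.\ $\us_r \setminus \us_q$) under the sequence metric $\rho$. This correspondence will rest on the standard lexicographic characterization of $\us_q$ in terms of the quasi-greedy expansion $\alpha(q) = \alpha_1(q)\alpha_2(q)\ldots$ of $1$ in base $q$: $(x_i) \in \us_q$ iff for every $n \geq 0$, $x_{n+1} < M$ forces $\sigma^n((x_i)) \prec \alpha(q)$, while $x_{n+1} > 0$ forces $\sigma^n((x_i)) \succ \overline{\alpha(q)}$. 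For $r \in \ub$, the quasi-greedy expansion $\alpha(r)$ coincides with the unique $r$-expansion of $1$, which is central to the constructions below.

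For the direction $(\Leftarrow)$ in (i), fix $p \in (1, q)$ and assume $\dim_H(\ub \cap (p, q)) > 0$. I plan to associate to each $r \in \ub \cap (p, q)$ a sequence $\gamma(r) \in \us_q \setminus \us_p$ built from $\alpha(r)$---for instance $\gamma(r) := 0^k \alpha(r)$ for a suitable $k$, or an analogous block concatenation that handles the case $\alpha_1(r) = M$. The sequence $\gamma(r)$ inherits $r$-uniqueness from $\alpha(r)$, hence $\gamma(r) \in \us_r \subseteq \us_q$; conversely, because $\alpha(r) \succ \alpha(p)$, an appropriate shift $\sigma^n(\gamma(r))$ violates the admissibility threshold $\alpha(p)$, so $\gamma(r) \notin \us_p$. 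The map $r \mapsto \gamma(r)$ will be bi-H\"older because perturbing $r$ by at most $(M+1)^{-n}$ alters $\alpha(r)$ only from coordinate $\asymp n$ onward. Consequently $\dim_H(\us_q \setminus \us_p) \geq \dim_H(\gamma(\ub \cap (p, q))) > 0$, giving $q \in \ub^L$. The direction $(\Leftarrow)$ of (ii) is dual: for $r \in \ub \cap (q, M+1]$ the same construction produces $\gamma(r) \in \us_r \setminus \us_q$.

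For the direction $(\Rightarrow)$ in (i), I will argue the contrapositive: assuming $\dim_H(\ub \cap (p_0, q)) = 0$ for some $p_0 \in (1, q)$, I will show $\dim_H(\us_q \setminus \us_{p_0}) = 0$. Introduce the critical-base map $\Phi: \us_q \to (1, q]$ defined by $\Phi((x_i)) := \inf\{s \in (1, M+1] : (x_i) \in \us_s\}$. By monotonicity of $F$, $(x_i) \in \us_q \setminus \us_{p_0}$ iff $\Phi((x_i)) \in (p_0, q]$, and at $s = \Phi((x_i))$ the lexicographic admissibility is tight, which by the standard criterion forces $s \in \overline{\ub}$. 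The map $\Phi$ will be H\"older in the sense that sequences agreeing on a long common prefix have close critical bases. Therefore $\dim_H(\us_q \setminus \us_{p_0}) \leq \dim_H(\overline{\ub} \cap (p_0, q]) = \dim_H(\ub \cap (p_0, q)) = 0$, using that $\overline{\ub} \setminus \ub$ is countable. Direction $(\Rightarrow)$ of (ii) will be handled by the dual map $\Psi((x_i)) := \sup\{s : (x_i) \in \us_s\}$.

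The main technical obstacle I foresee is the quantitative H\"older control tying the Euclidean metric on $(1, M+1]$ to the sequence metric $\rho$ on $\Omega$, both for the embedding $r \mapsto \gamma(r)$ and for the critical-base maps $\Phi, \Psi$. Concretely, one needs (log-)Lipschitz continuity of $q \mapsto \alpha(q)$ uniformly on compact subsets of $(1, M+1]$, together with a matching lower bound that prevents collapse. A secondary subtlety will be cleanly absorbing the countable exceptional set $\overline{\ub} \setminus \ub$, as well as the plateau endpoints appearing in (\ref{e14}), so that positivity (or vanishing) of Hausdorff dimension transfers faithfully in both directions between the parameter space and the symbolic space.
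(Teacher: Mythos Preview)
The paper does not give its own proof of this statement: Theorem~\ref{th:11} is quoted from \cite{Kong_Li_Lv_Vries2016} as background. I therefore assess your plan on its own merits.

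Your $(\Leftarrow)$ direction is essentially sound after a small repair. The sequence $\gamma(r)=0^{k}\alpha(r)$ does \emph{not} lie in $\us_r$: at $n=k$ one would need $\sigma^{k}(\gamma(r))=\alpha(r)\prec\alpha(r)$, which fails (equivalently, $0^{k}\alpha(r)$ and $0^{k-1}10^{\infty}$ are two distinct $r$-expansions of $r^{-k}$). What is true, and all you need, is that $\gamma(r)\in\us_q$ directly, since $r<q$ gives the strict inequality $\alpha(r)\prec\alpha(q)$ at the shift $n=k$; and $\gamma(r)\notin\us_p$ because $\alpha(r)\succ\alpha(p)$. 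The bi-H\"older control you anticipate for $r\mapsto\alpha(r)$ on $\overline{\ub}\cap[p,q]$ is exactly the content of Lemmas~\ref{l35} and~\ref{l36}, so this half goes through.

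The $(\Rightarrow)$ direction, however, has a genuine gap, with two compounding problems. First, the critical-base map $\Phi$ is not H\"older, indeed not even continuous, on $\us_q\setminus\us_{p_0}$. For $M=1$, the sequences $(10)^{\infty}$ and $(10)^{N}(110)^{\infty}$ agree on their first $2N$ digits, yet their critical bases are the golden ratio and the root $r$ with $\alpha(r)=(110)^{\infty}$, a fixed positive distance apart for every $N$. Second, even if $\Phi$ were H\"older, the inequality it yields runs the wrong way: Lemma~\ref{l26} bounds $\dim_H\Phi(X)$ from above by a multiple of $\dim_H X$, whereas you need to bound $\dim_H(\us_q\setminus\us_{p_0})$ \emph{from above} by $\dim_H(\overline{\ub}\cap(p_0,q])$. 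Since $\Phi$ is highly non-injective, no forward H\"older estimate can deliver that. (Incidentally, your dual map $\Psi((x_i))=\sup\{s:(x_i)\in\us_s\}$ is vacuous: because $s\mapsto\us_s$ is increasing, this supremum is identically $M+1$.) A viable route to $(\Rightarrow)$ must go the other way---exhibiting $\us_q\setminus\us_{p_0}$ as a countable union of H\"older images of pieces of $\overline{\ub}\cap(p_0,q]$---or else invoke the plateau structure \eqref{e13}--\eqref{e14} together with the fact, recorded just before those equations, that $\dim_H(\us_{q_0^*}\setminus\us_{q_0})=0$ on each plateau $[q_0,q_0^*]$.
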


\begin{remark}\label{rem:12} 
Since $\hub=\ub^L\cap\ub^R$,  Theorem \ref{th:11}  also gives an equivalent condition for the bifurcation set $\hub$, i.e.,   $q\in\hub$ if and only if
\[
\dim_H(\ub\cap(p, q))>0\qquad\textrm{and}\qquad \dim_H(\ub\cap(q,r))>0
\]
for any $1<p<q<r\le M+1$.
\end{remark}
}

\subsection{Entropy bifurcation set $\bb$}
 For a symbolic subset $X\subset{\Omega}$ its \emph{topological entropy} is defined by
\[
h(X):=\liminf_{n\ra\f}\frac{\log\# B_n(X)}{n},
\]
where $B_n(X)$ denotes the set of all length $n$ subwords occurring in elements of $X$, and $\# A$ denotes the cardinality of a set $A$. {Here and throughout the paper we use base $M+1$ logarithms.} Recently, Komornik et al. showed in \cite{Komornik_Kong_Li_2015_1} (see also Lemma \ref{l25} below) that the function
\[
H: (1, M+1]\ra [0, {1}]; \qquad q\mapsto h(\us_q)
\]
is a Devil's staircase:
\begin{itemize}
\item $H$ is a continuous and {non-decreasing} function from $(1,M+1]$ onto $[0, {1}]$.
\item $H$ is {locally constant} Lebesgue almost everywhere  in $(1, M+1]$.
\end{itemize}

Let $\bb$ be the \emph{bifurcation set} of the entropy function $H$, defined by
\[
\bb=\bb(M):=\set{q\in(1,M+1]: H(p)\ne H(q)\textrm{ for any }p\ne q}.
\]
In \cite{AlcarazBarrera-Baker-Kong-2016} Alcaraz Barrera with  the second and third authors proved that $\bb\subset\ub$, and hence $\bb$ is of zero Lebesgue measure. They also showed   that $\bb$ has full Hausdorff dimension.  Furthermore, $\bb$ has no isolated points and can be written as
\begin{equation}\label{e15}
(1, M+1]\setminus\bb=(1,q_{KL}]\cup\bigcup[p_L, p_R],
\end{equation}
where the union on the right hand side is countable and pairwise disjoint. By the definition of the bifurcation set $\bb$ it follows that    each connected component $[p_L, p_R]$   is a maximal interval on which $H$ is constant. Thus each closed interval  $[p_L,p_R]$ is called a \emph{plateau} of $H$ (or an \emph{entropy plateau}). Furthermore, the left and right endpoints of each entropy plateau in $(q_{KL},  M+1]$ are both algebraic numbers (see also Lemma \ref{l31} below).

In analogy with $\ub^L$ and $\ub^R$ we also define two one-sided bifurcation sets of $H$:
\begin{align*}
\bb^L&=\bb^L(M):=\set{q\in(1, M+1]: H(p)<H(q) \textrm{ for any }p\in(1,q)};\\
\bb^R&=\bb^R(M):=\set{q\in(1, M+1]: H(r)>H(q)\textrm{ for any }r\in(q, M+1]}.
\end{align*}
We call $\bb^L$ and $\bb^R$  the \emph{left bifurcation set} and the \emph{right bifurcation set} of $H$, respectively. Comparing these sets with the bifurcation sets $\hub, \ub^L$ and $\ub^R$ of $F,$ we have analogous properties for the bifurcation sets $\bb, \bb^L$ and $\bb^R$.
For example,  $\bb\subset\bb^L$ and $\bb\subset\bb^R$. Furthermore,
\[\bb^L\cap\bb^R=\bb\qquad \textrm{and}\qquad \bb^L\cup\bb^R=\overline{\bb}.\]
{The difference set $\bb^L\setminus\bb$ consists of all left endpoints of the plateaus in $(q_{KL},  M+1]$ of $H$. Similarly, $\bb^R\setminus\bb$ consists of all right endpoints of the plateaus of $H$.} In other words, by (\ref{e15}) we have
\begin{align}
\label{e16}
\begin{split}
(1, M+1]\setminus\bb^L&=(1, q_{KL}]\cup\bigcup(p_L, p_R],\\
(1, M+1]\setminus\bb^R&=(1,q_{KL})\cup\bigcup[p_L, p_R).
\end{split}
\end{align}
 We emphasize that $M+1$ belongs to $\bb, \bb^L$ and $\bb^R$. Since $\bb\subset\hub$,  by (\ref{e14}) and (\ref{e16}) we also have
\[
\bb^L\subset\ub^L\qquad \textrm{and}\qquad\bb^R\subset\ub^R.
\]

Now we state our main results.
Inspired by the characterizations of $\ub^L$ and $\ub^R$ described in {Theorem  \ref{th:11}}, we characterize the left and right bifurcation sets $\bb^L$ and $\bb^R$ respectively.

\begin{main}
\label{main:1}
If $M=1$ or $M$ is even, the following statements are equivalent.
\begin{enumerate}[{\rm(i)}]
\item $q\in\bb^L$.
\item $\dim_H(\us_q\setminus\us_p)=\dim_H\us_q>0$ for any $p\in(1,q)$.
\item $\lim_{p\nearrow q}\dim_H(\bb \cap(p, q))=\dim_H\u_q>0$.
\item $\lim_{p\nearrow q}\dim_H(\ub\cap(p,q))=\dim_H\u_q>0.$
\end{enumerate}
\end{main}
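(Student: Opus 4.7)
The plan is to close the loop (i) $\Rightarrow$ (ii) $\Rightarrow$ (iv) $\Rightarrow$ (iii) $\Rightarrow$ (i). The implications (i) $\Rightarrow$ (ii) and (iii) $\Rightarrow$ (i) are essentially soft, while the substance of the argument sits in (ii) $\Rightarrow$ (iv); the remaining link (iv) $\Rightarrow$ (iii) uses a uniform dimension bound on $\ub \setminus \bb$.

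For (i) $\Rightarrow$ (ii), I would exploit the inclusion $\us_p \subseteq \us_q$ for $p < q$ (recorded in Section \ref{s2}) together with the identification $\dim_H \us_r = h(\us_r) = H(r)$ that follows from the choice of base-$(M+1)$ logarithms and Lemma \ref{l25}. These give
\[
\dim_H \us_q = \max\bigl\{H(p),\, \dim_H(\us_q \setminus \us_p)\bigr\},
\]
and since (i) forces $H(p) < H(q) = \dim_H \us_q$, the second term in the maximum must equal $\dim_H \us_q$; positivity follows from $q > q_{KL}$. For (iii) $\Rightarrow$ (i) I argue by contrapositive: if $q \notin \bb^L$ then by (\ref{e16}) either $q \le q_{KL}$ (forcing $\dim_H \u_q = 0$ and failing the positivity required in (iii)) or $q \in (p_L, p_R]$ for some entropy plateau, and in the latter case any $p \in (p_L, q)$ yields $\bb \cap (p, q) \subseteq \bb \cap (p_L, p_R] = \emptyset$, since $\bb = \bb^L \cap \bb^R$ excludes both the interior and the right endpoint of every plateau. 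For (iv) $\Rightarrow$ (iii), I decompose $\ub \cap (p, q) = (\bb \cap (p, q)) \cup ((\ub \setminus \bb) \cap (p, q))$ and bound $\dim_H((\ub \setminus \bb) \cap (p, q))$: the set $\ub \setminus \bb$ is supported on the countable union of entropy plateaus, and on each plateau $[p_L, p_R]$ the algebraic structure of the endpoints supplied by Lemma \ref{l31}, combined with the description of $\hub$ in (\ref{e13}), bounds $\dim_H(\ub \cap [p_L, p_R])$ strictly below $\dim_H \u_q$ uniformly for $p$ close enough to $q$, so the limit is carried entirely by $\bb$.

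The heart of the argument is (ii) $\Rightarrow$ (iv). The strategy is to transfer the symbolic abundance of $\us_q \setminus \us_p$ onto the set of univoque bases in $(p, q)$ via a transversality argument of the sort advertised in the abstract. To each sequence $(x_i)$ in a carefully chosen subset of $\us_q \setminus \us_p$ I associate the unique base $r = r((x_i))$ in which $(x_i)$ is the quasi-greedy expansion of $1$; one checks that $r \in (p, q]$ and $r \in \overline{\ub}$. A uniform transversality estimate shows that the map $(x_i) \mapsto r((x_i))$ is H\"older with exponent $\log_{M+1} q$ between the symbolic metric $\rho$ and the Euclidean metric, so the symbolic dimension $H(q) = \dim_H(\us_q \setminus \us_p)$ transfers to Euclidean dimension $H(q)/\log_{M+1} q = \dim_H \u_q$ for the resulting set of bases. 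Letting $p \nearrow q$ and absorbing the at-most-countable difference between $\overline{\ub}$ and $\ub$ then yields (iv).

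The principal obstacle is precisely this transversality construction: one must select sequences in $\us_q \setminus \us_p$ whose induced bases cluster throughout $(p, q)$ with enough separation to realise the full dimension. The hypothesis $M = 1$ or $M$ even appears to be essential because the construction leans on the reflection map around the middle digit $M/2$, which is an integer only in these cases; this reflection both preserves quasi-greediness of the candidate sequences and delivers the matched pairs needed to keep the transversality constant bounded away from zero. For odd $M \ge 3$ the missing central digit breaks this symmetry and a substantially different encoding would be required.
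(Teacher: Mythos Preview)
Your scheme (i) $\Rightarrow$ (ii) $\Rightarrow$ (iv) $\Rightarrow$ (iii) $\Rightarrow$ (i) differs from the paper's (i) $\Leftrightarrow$ (ii) and (i) $\Rightarrow$ (iii) $\Rightarrow$ (iv) $\Rightarrow$ (i), and while (i) $\Rightarrow$ (ii) and (iii) $\Rightarrow$ (i) are fine, the two substantive links have genuine problems.

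For (ii) $\Rightarrow$ (iv), the transversality argument is pointed the wrong way. A H\"older estimate on $(x_i)\mapsto r((x_i))$ with exponent $\log_{M+1}q$ gives only the \emph{upper} bound $\dim_H(\text{bases})\le \dim_H(\text{sequences})/\log_{M+1}q$, which is already Lemma \ref{l311}; you need a \emph{lower} bound on $\dim_H(\ub\cap(p,q))$. For that one uses instead the Lipschitz property of $\alpha$ (Lemma \ref{l35}), and then one must exhibit a subset of $\us_q\setminus\us_p$ of full dimension $H(q)$ whose elements satisfy the self-bounding inequalities $\overline{(x_i)}\prec\sigma^n((x_i))\le(x_i)$ that characterise $\{\alpha(\ell):\ell\in\overline{\ub}\}$. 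That ``carefully chosen subset'' is precisely the object under investigation, and you never construct it. Moreover, by Theorem~\ref{main-prime} the implication (ii) $\Rightarrow$ (iv) holds for \emph{every} $M$ (for odd $M\ge 3$ both (ii) and (iv) are equivalent to $q\in\bb^L\cup\{q_\star\}$), so the hypothesis $M=1$ or $M$ even cannot be what makes this step work; your reflection-symmetry explanation is a red herring here.

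The hypothesis is actually needed in your step (iv) $\Rightarrow$ (iii). Your argument there requires $\dim_H(\ub\cap[p_L,p_R])<\dim_H\u_q$ for every entropy plateau meeting a left neighbourhood of $q$, but when $M=2k+1\ge 3$ and $q=q_\star$ the plateau $[k+2,q_\star]$ has period $m=1$ and $\dim_H(\ub\cap[k+2,q_\star])=\log 2/\log q_\star=h(\us_{q_\star})/\log q_\star=\dim_H\u_{q_\star}$, so the strict inequality fails---this is exactly the equality case of Lemma \ref{l31}(ii). The paper avoids this difficulty by proving (ii) $\Rightarrow$ (i) and (iv) $\Rightarrow$ (i) directly via the plateau analysis of Proposition \ref{p38} and Corollary \ref{cor:39}: for any $q$ inside or at the right end of a plateau one shows $\dim_H(\vs_q\setminus\vs_{p_L})<\dim_H\vs_{p_L}$ (resp.\ the analogous local $\ub$-dimension estimate), and the \emph{strict} inequality in Lemma \ref{l31}(ii) is precisely where $M=1$ or $M$ even enters. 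The forward implication (i) $\Rightarrow$ (iii) is obtained not by transversality but by quoting an existing local-dimension result for $\bb$ (Lemma \ref{l310}); the transversality technique in the paper is reserved for Section \ref{s4}, where it computes $\dim_H(\ub\cap[p_L,p_R])$ exactly.
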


For odd $M\geq 3$ this theorem must be modified. This is due to the surprising presence of a single exceptional base $q_\star$ which is not an element of $\bb^L$, but for which (ii) and (iv) of Theorem \ref{main:1} nonetheless hold. Let
\begin{equation}
\label{e17}
{q_\star=} q_\star(M):=\begin{cases}
\frac{k+3+\sqrt{k^2+6k+1}}{2} & \qquad\mbox{if \quad$M=2k+1$},\\
\frac{k+3+\sqrt{k^2+6k-3}}{2} & \qquad\mbox{if \quad$M=2k$}.
\end{cases}
\end{equation}
(We will have use for $q_\star(M)$ with $M$ even later on.)

\begin{alternativetheorem}{\ref{main:1}}
\label{main-prime}
Suppose $M=2k+1\geq 3$. 
\begin{enumerate}[(a)]
\item $q\in\bb^L$ if and only if $\lim_{p\nearrow q}\dim_H(\bb \cap(p, q))=\dim_H\u_q>0$.
\item The following statements are equivalent:
\begin{enumerate}[{\rm(i)}]
\item $q\in\bb^L\cup\{q_\star(M)\}$.
\item $\dim_H(\us_q\setminus\us_p)=\dim_H\us_q>0$ for any $p\in(1,q)$.
\item $\lim_{p\nearrow q}\dim_H(\ub\cap(p,q))=\dim_H\u_q>0.$
\end{enumerate}
\end{enumerate}
\end{alternativetheorem}

The characterization of $\bb^R$ is more straightforward:

\begin{main}\label{main:2}
The following statements are equivalent for every $M\in\N$.
\begin{enumerate}[{\rm(i)}]
\item $q\in\bb^R$.
\item $\dim_H(\us_r\setminus\us_q)=\dim_H\us_r>0$ for any $r\in(q,M+1]$.
\item $\lim_{r\searrow q}\dim_H(\bb\cap(q,r))=\dim_H\u_q>0$, or $q= q_{KL}$.
\item $\lim_{r\searrow q}\dim_H(\ub\cap(q, r))=\dim_H\u_q>0$, or $q= q_{KL}$.
\end{enumerate}
\end{main}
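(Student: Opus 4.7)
The plan is to establish the four equivalences by combining a direct entropy-to-dimension translation for $(i)\Leftrightarrow(ii)$ with a density analysis at $q$ from the right for $(i)\Leftrightarrow(iii)\Leftrightarrow(iv)$. The key preliminary is that, since the metric $\rho$ in (\ref{e12}) contracts by $(M+1)^{-1}$ and topological entropy is taken in base-$(M+1)$ logarithm, $H(q) = h(\us_q) = \dim_H \us_q$ for every $q$. I would first dispatch $q = q_{KL}$: there $H(q_{KL}) = 0$ so $H$ is strictly greater to the right, giving (i); condition (ii) holds because $\us_{q_{KL}}$ has dimension zero and hence $\us_r \setminus \us_{q_{KL}}$ inherits the full dimension of $\us_r$; and (iii)--(iv) follow from their disjunctive clauses.

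For $q > q_{KL}$, $(i)\Rightarrow(ii)$ is immediate from $H(q) = \dim_H \us_q$: the strict inequality $\dim_H \us_q < \dim_H \us_r$ together with $\us_q \subset \us_r$ forces $\dim_H(\us_r \setminus \us_q) = \dim_H \us_r > 0$. For $(ii)\Rightarrow(i)$ I argue by contrapositive: if $q \notin \bb^R$ then (\ref{e16}) places $q$ in some entropy plateau $[p_L, p_R)$ on which $H$ is constant; refining by the $F$-plateau substructure (using in particular the fact from \cite[Lemma 3.4]{Kong_Li_Lv_Vries2016} that $\us_{q_0^*}\setminus\us_{q_0}$ has zero Hausdorff dimension on each $F$-plateau $[q_0, q_0^*]$), I would locate some $r \in (q, p_R]$ in the same entropy plateau for which $\us_r \setminus \us_q$ has dimension zero, contradicting $\dim_H(\us_r \setminus \us_q) = \dim_H \us_r > 0$.

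The central block is $(i)\Leftrightarrow(iii)\Leftrightarrow(iv)$. The inclusion $\bb \subset \ub$ gives $(iii)\Rightarrow(iv)$ for free. For $(iv)\Rightarrow(i)$ I again work by contrapositive: if $q \in [p_L, p_R)$ for some entropy plateau, then every base in that plateau has $H$-value $H(q)$, and a direct dimension estimate for bases trapped inside a plateau shows $\dim_H(\ub \cap (q, p_R)) < \dim_H \u_q$, so the right-limit in (iv) cannot equal $\dim_H \u_q$. The forward direction $(i)\Rightarrow(iii)$ is the crux: I would construct an explicit self-similar family of bases in $\bb \cap (q, q+\ep)$ by concatenating a prefix of the quasi-greedy expansion of $q$ with admissible blocks indexing a dimension-$\dim_H\u_q$ subset of $\us_q$, yielding the lower bound $\liminf_{r\searrow q} \dim_H(\bb \cap (q, r)) \ge \dim_H \u_q$. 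The matching upper bound for $\ub$ (hence for $\bb$) uses the transversality technique announced in the abstract: any $p \in \ub \cap (q, q+\ep)$ has quasi-greedy expansion agreeing with that of $q$ on a long initial block, and a bi-Lipschitz comparison between a base and its greedy expansion transfers the symbolic dimension bound into the parameter bound $\dim_H \u_q$.

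I expect the principal obstacle to be matching the \emph{exact} value $\dim_H \u_q$ in $(iii)$ and $(iv)$, rather than just positivity; positivity already follows from Theorem \ref{th:11}, but the equality demands a careful two-sided dimension calculation and is where the transversality machinery carries the argument. A subsidiary difficulty in $(ii)\Rightarrow(i)$ is that an entropy plateau may strictly contain its constituent $F$-plateaus, so when $q$ fails to lie in any single $F$-plateau the choice of $r$ must be made by exploiting the density of $F$-plateau endpoints inside the ambient entropy plateau rather than by staying within one fixed $F$-component.
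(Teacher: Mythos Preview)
Your approach to $(ii)\Rightarrow(i)$ has a genuine gap. You propose to find, for $q\in[p_L,p_R)$, some $r\in(q,p_R]$ with $\dim_H(\us_r\setminus\us_q)=0$ by using the $F$-plateau structure. But the existence of such an $r$ is precisely the statement $q\notin\ub^R$, and since $\ub^R=\ub$ strictly contains $\bb^R$ with $\dim_H(\ub\cap[p_L,p_R])>0$ (Theorem~\ref{thm:41}), there are uncountably many $q\in[p_L,p_R)\setminus\bb^R$ for which $\dim_H(\us_r\setminus\us_q)>0$ for \emph{every} $r>q$. Your proposed fix (``exploiting the density of $F$-plateau endpoints'') cannot rescue this: if $q\in\hub\cap(p_L,p_R)$, then by the very definition of $\hub$ no $r$ with zero-dimensional difference exists, and no limiting argument over $F$-plateau endpoints produces one. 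What is actually needed is not zero dimension but the \emph{strict inequality} $\dim_H(\us_r\setminus\us_q)<\dim_H\us_r$ for some $r\in(q,p_R)$. The paper obtains this via Proposition~\ref{p38}(i), which shows that any sequence in $\vs_{p_R}\setminus\vs_{p_L}$ eventually lands in the sofic shift $X_{\mathcal G}$ of entropy $\log 2/m$, and then combines this with the entropy lower bound $h(\us_{p_L})\ge\log 2/m$ from Lemma~\ref{l31}(ii) (itself resting on the analysis in \cite{AlcarazBarrera-Baker-Kong-2016}). This quantitative comparison is the missing idea in your outline, and your $(iv)\Rightarrow(i)$ sketch (``a direct dimension estimate for bases trapped inside a plateau'') requires the same machinery via Proposition~\ref{p38}(ii).

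Two minor points. Your plan for $(i)\Rightarrow(iii)$ (building an explicit self-similar family of bases in $\bb\cap(q,q+\ep)$) is a legitimate alternative to the paper's route, which simply quotes the local-dimension result Lemma~\ref{l310} from \cite{Kalle-Kong-Li-Lv-2016}. And the upper bound you need for $(iii)/(iv)$ is not a transversality argument: the paper obtains it in Lemma~\ref{l311} from the H\"older continuity of $\al^{-1}$ established in Lemma~\ref{l36}. The transversality technique appears only in Section~\ref{s4}, in the proof of Theorem~\ref{main:4}, not here.
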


The asymmetry between the characterizations of $\bb^L$ and $\bb^R$ can be partially explained by the asymmetry of entropy plateaus. For instance, if $[p_L,p_R]$ is an entropy plateau, it follows from \cite[Lemma 4.10]{AlcarazBarrera-Baker-Kong-2016} that $p_L\in{\overline{\ub}\setminus \ub}$, whereas $p_R\in{\ub}$. Moreover, $p_R$ is a left {and right} accumulation point of ${\ub}$, but $p_L$ is not a right accumulation point of $\ub$. This helps explain why there is no counterpart in Theorem \ref{main:2} to the special base {$q_\star(M)$} of Theorem \ref{main-prime}.

\begin{remark}\label{rem:13} \mbox{}
\begin{enumerate}
\item  
Since $\bb=\bb^L\cap\bb^R$ and $q_{KL}\notin\bb$, Theorems \ref{main:1}, \ref{main-prime} and \ref{main:2} give equivalent conditions for the bifurcation set $\bb$. For example, {when $M=1$, $q\in\bb$}   if and only if
\[
  \lim_{p\nearrow q}\dim_H(\ub\cap(p,q))=\lim_{r\searrow q}\dim_H(\ub\cap(q,r))=\dim_H\u_q>0.
\]

\item 
In view of Lemma \ref{l311} {below}, we emphasize that the limits in statements (iii) and (iv) of  Theorems \ref{main:1} and \ref{main:2} are at most equal to $\dim_H\u_q$ for every $q\in(1, M+1]$. So, the theorems characterize when this largest possible value is attained. 

\end{enumerate}
\end{remark}

{ 
Since the sets $\ub$ and $\bb$ are of Lebesgue measure zero and nowhere dense, a natural measure of their distribution within the interval $(1, M+1]$ are the local dimension functions 
\[
\lim_{\de\ra 0}\dim_H(\ub\cap(q-\de, q+\de))\qquad\textrm{and}\qquad \lim_{\de\ra 0}\dim_H(\bb\cap(q-\de, q+\de)).
\]
In \cite[Theorem 2]{Kalle-Kong-Li-Lv-2016} it was shown that 
{\[
q\in\overline{\bb}\setminus\set{q_{KL}}\quad\Longleftrightarrow\quad \lim_{\de\ra 0}\dim_H(\bb\cap(q-\de, q+\de))=\dim_H\u_q>0.
\]}
As for the set $\ub$, we will show in Lemma \ref{l311} below that 
\begin{equation}\label{e18}
\lim_{\de\ra 0}\dim_H(\ub\cap(q-\de, q+\de))\le \dim_H\u_q\qquad\textrm{for all}\quad q\in(1, M+1].
\end{equation}
Observe that {$q_\star(M)\in\bb^R$} for $M=2k+1\ge 3$. {(See Lemma \ref{l31} below.) Thus} Theorems \ref{main:1}, \ref{main-prime} and \ref{main:2} imply that the upper bound $\dim_H\u_q$ for the limit in (\ref{e18}) is attained if and only if $q\in\overline{\bb}$. Precisely:

\begin{cmain}\label{cor:3}
$q\in\overline{\bb}\setminus\set{q_{KL}}$ if and only if 
\[\lim_{\de\ra 0}\dim_H(\ub\cap(q-\de, q+\de))=\dim_H\u_q>0.\]
\end{cmain}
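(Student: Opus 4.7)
The plan is to derive Corollary \ref{cor:3} as a direct consequence of Theorems \ref{main:1}, \ref{main-prime} and \ref{main:2}, combined with the upper bound (\ref{e18}). The key observation I rely on is that the two-sided local dimension splits as a maximum of two one-sided quantities,
\begin{equation*}
\lim_{\de\ra 0}\dim_H(\ub\cap(q-\de, q+\de))=\max\left\{\lim_{p\nearrow q}\dim_H(\ub\cap(p,q)),\; \lim_{r\searrow q}\dim_H(\ub\cap(q,r))\right\},
\end{equation*}
which follows from the stability of Hausdorff dimension under finite unions together with the fact that the single point $q$ contributes nothing (the one-sided expressions in $\de$ are monotone in $\de$, so the limits exist). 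Each of the one-sided quantities on the right is precisely the object characterized by the earlier theorems.

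For the forward direction I would fix $q\in\overline{\bb}\setminus\set{q_{KL}}$ and use the identity $\overline{\bb}=\bb^L\cup\bb^R$ to split into cases. If $q\in\bb^L$, then Theorem \ref{main:1}(iv) (when $M=1$ or $M$ is even) or Theorem \ref{main-prime}(b)(iii) (when $M=2k+1\ge 3$, using $\bb^L\subseteq\bb^L\cup\set{q_\star(M)}$) yields $\lim_{p\nearrow q}\dim_H(\ub\cap(p,q))=\dim_H\u_q>0$. If instead $q\in\bb^R$, then $q\ne q_{KL}$ combined with Theorem \ref{main:2}(iv) forces $\lim_{r\searrow q}\dim_H(\ub\cap(q,r))=\dim_H\u_q>0$. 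In either case the two-sided limit is at least $\dim_H\u_q$, and the reverse inequality is exactly (\ref{e18}), so equality is attained.

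For the converse I would assume $\lim_{\de\ra 0}\dim_H(\ub\cap(q-\de,q+\de))=\dim_H\u_q>0$. Positivity of $\dim_H\u_q$ forces $q>q_{KL}$, so in particular $q\ne q_{KL}$. By the max identity above, at least one of the two one-sided limits equals $\dim_H\u_q>0$. Applying the reverse implication of Theorem \ref{main:1}(iv) / \ref{main-prime}(b)(iii) in the left case, or of Theorem \ref{main:2}(iv) in the right case, places $q$ in $\bb^L\cup\set{q_\star(M)}$ or in $\bb^R$. Recalling that $q_\star(M)\in\bb^R$ for odd $M\ge 3$ (as noted in the paragraph preceding the corollary), in every subcase $q\in\bb^L\cup\bb^R=\overline{\bb}$.

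The only mild obstacle I anticipate is the bookkeeping for the exceptional base $q_\star(M)$ when $M$ is odd and at least $3$: the equivalence between membership in $\bb^L$ and the left $\ub$-limit in Theorem \ref{main-prime} is only valid modulo this single base, so I have to appeal separately to the fact $q_\star(M)\in\bb^R$ in order to keep the argument inside $\overline{\bb}$. Apart from that, every step is a straightforward invocation of previously established results.
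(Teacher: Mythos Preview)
Your proposal is correct and follows essentially the same approach as the paper, which derives the corollary directly from Theorems \ref{main:1}, \ref{main-prime} and \ref{main:2} together with the upper bound (\ref{e18}) and the observation $q_\star(M)\in\bb^R$. You have simply made explicit the routine step of splitting the two-sided local dimension into a maximum of the two one-sided limits, which the paper leaves implicit.
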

}

{Clearly, $\lim_{\de\ra 0}\dim_H(\ub\cap(q-\de, q+\de))=0$ when $q\not\in\overline{\ub}$. It is interesting to ask which values this limit can take for $q\in\overline{\ub}\backslash\overline{\bb}$. This may be the subject of a future paper.
}

%
%
%
%

\subsection{The difference set $\ub \setminus \bb$}

Note that $\bb\subset {\ub}$, and both are Lebesgue null sets of full Hausdorff dimension.  Furthermore, $\ub\setminus\bb$ is a dense subset  of $\ub$. So the box dimension of $\ub\setminus\bb$ is given by
\[\dim_B(\ub\setminus\bb)=\dim_B(\overline{\ub\setminus\bb})=\dim_B {\overline{\ub}}=1. \]
    On the other hand, our next result shows that the Hausdorff dimension of $\ub\setminus\bb$ is significantly smaller than one.
		
{\begin{main}
\label{main:4}\mbox{}

\begin{enumerate}[{\rm(i)}]
\item If $M=1$, then
\[\dim_H(\ub\setminus\bb)=\frac{\log 2}{3\log\lambda^*}\approx 0.368699,\]
where $\lambda^*\approx 1.87135$ is the unique root in $(1,2)$ of the equation $x^5-x^4-x^3-2x^2+x+1=0$.

\item If $M=2$, then 
\[
\dim_H(\ub\setminus\bb)=\frac{\log 2}{2\log\ga^*}\approx 0.339607,
\]
where $\ga^*\approx 2.77462$ is the unique root in $(2,3)$ of the equation $x^4-2x^3-3x^2+2x+1=0$.

\item If $M\ge 3$, then 
\[
\dim_H(\ub\setminus\bb)=\frac{\log 2}{\log q_\star(M)},
\]
where $q_\star(M)$ is given by \eqref{e17}. 
\end{enumerate}
\end{main}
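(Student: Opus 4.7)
My plan is to decompose $\ub\setminus\bb$ via the entropy-plateau structure (\ref{e15}):
\[
\ub\setminus\bb=\bigl(\ub\cap(1,q_{KL}]\bigr)\cup\bigcup_{[p_L,p_R]}\bigl(\ub\cap[p_L,p_R]\bigr),
\]
where the union runs over the countably many entropy plateaus in $(q_{KL},M+1]$. Since $q_{KL}$ is the smallest base in $\overline{\ub}$, the first term on the right is a single point, which contributes dimension zero. By the countable stability of Hausdorff dimension the computation therefore reduces to
\[
\dim_H(\ub\setminus\bb)=\sup_{[p_L,p_R]}\dim_H\bigl(\ub\cap[p_L,p_R]\bigr).
\]

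The next step is to compute $\dim_H(\ub\cap[p_L,p_R])$ for each plateau. Using the classification of entropy plateaus from \cite{AlcarazBarrera-Baker-Kong-2016}, the quasi-greedy expansion $\alpha(p_L)=w^\infty$ is periodic for a finite word $w$ depending on the plateau, while $\alpha(p_R)$ has a closely related form. For $q\in\ub\cap[p_L,p_R]$ the sequence $\alpha(q)$ must begin with $w$ and continue through an admissible tail constrained simultaneously by entropy-preservation and the univoque property. I would characterize these admissible tails as an explicit subshift $Y_w\subset\{0,1,\ldots,M\}^\N$ whose entropy can be read off from the combinatorics of $w$. To pass from this symbolic description to the parameter side, I would apply a transversality argument: the upper estimate $|p-q|\le Cq^{-N}$ when $\alpha(p)$ and $\alpha(q)$ share a prefix of length $N$ is automatic, while the matching lower estimate is the substantive transversality claim. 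Together they would yield a formula of the shape
\[
\dim_H\bigl(\ub\cap[p_L,p_R]\bigr)=\frac{h(Y_w)\log(M+1)}{\log p_L},
\]
which I then evaluate explicitly from the structure of $w$.

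With the per-plateau formula in hand, the remaining task is to optimize over all plateaus. For $M\ge 3$ the supremum equals $\log 2/\log q_\star(M)$, where $q_\star(M)$ from (\ref{e17}) is a quadratic algebraic number playing a distinguished role; I expect this value to arise from a specific plateau (or family of plateaus) whose left endpoint is, or accumulates to, $q_\star(M)$, with the associated $Y_w$ having natural topological entropy $\log 2$. For $M=2$ and $M=1$ the optimum is achieved at different and more complicated plateaus whose left endpoints $\gamma^*$ and $\lambda^*$ respectively satisfy the stated minimal polynomials; the denominators $2$ and $3$ in the dimension formulas reflect the spacing of free binary choices in the corresponding subshifts $Y_w$. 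Showing that these are indeed the suprema requires ruling out larger contributions from every other plateau, which is feasible since $\dim_H(\ub\cap[p_L,p_R])$ decays in a controlled way as the period of $w$ grows.

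The main obstacle is the transversality lower bound. One must construct a suitable Bernoulli-like measure on $Y_w$, push it through the parametrization $q\mapsto\alpha(q)$, and verify that the resulting measure has the right local dimension in the $q$-variable. This reduces to showing a transversality condition on the derivative of the map $q\mapsto\pi_q((\alpha_i(q)))$ restricted to the plateau, ensuring that distinct symbolic tails produce $q$-values separated at the expected scale. A secondary challenge is the case-by-case combinatorial optimization for small $M$, where the optimal plateau is surprisingly not the simplest one, so that the specific minimal polynomials for $\lambda^*$ and $\gamma^*$ emerge only after a careful enumeration guided by the plateau classification.
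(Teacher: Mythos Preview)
Your global strategy---decompose $\ub\setminus\bb$ along entropy plateaus, use countable stability, compute the dimension on each plateau, and optimize---is exactly what the paper does. However, two concrete errors derail the execution.

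First, the per-plateau formula and the identification of the optimizing bases are wrong. The paper proves (Theorem~\ref{thm:41}) that for a plateau of period $m$,
\[
\dim_H(\ub\cap[p_L,p_R])=\frac{\log 2}{m\log p_R},
\]
with $p_R$ in the denominator, not $p_L$. Correspondingly, $\lambda^*$, $\gamma^*$ and $q_\star(M)$ are the \emph{right} endpoints of the optimal plateaus (e.g.\ $\al(\lambda^*)=111(001)^\f$, $\al(q_\star(2k+1))=(k+2)k^\f$), not left endpoints or accumulation points. Your formula with $\log p_L$ would give strictly larger values and would not recover the stated constants.

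Second, and more importantly, the transversality argument you sketch does not close the gap between upper and lower bounds. The easy direction (H\"older continuity of $\al^{-1}$, your ``automatic'' estimate) yields only $\dim_H(\ub\cap[p_L,p_R])\le\frac{\log 2}{m\log p_L}$, while the Lipschitz direction (Lemma~\ref{l35}, your ``substantive'' claim) combined with the natural subshift gives the lower bound $\ge\frac{\log 2}{m\log p_R}$. Since $p_L<p_R$, these do not match, and no bi-Lipschitz or measure-pushing argument on the whole plateau can remove this discrepancy: the two metrics $\rho_{p_L}$ and $\rho_{p_R}$ are genuinely different. The paper's substantive idea is to subdivide $(p_L,p_R)$ into subintervals $[q_n,q_{n+1}]$ defined by $\al(q_n)=\bigl(a_1\ldots a_m^+(\overline{a_1\ldots a_m})^{n-1}\overline{a_1\ldots a_m^+}\bigr)^\f$, bound the symbolic set over each piece by a proper subshift $X_{\mathcal G,n+1}\subsetneq X_{\mathcal G}$ with entropy $(\log\varphi_{n+1})/m<(\log 2)/m$, and then prove via a polynomial transversality estimate (Lemmas~\ref{lem:polynomials}--\ref{lem:46}) that $\frac{\log\varphi_{n+1}}{\log 2}<\frac{\log q_n}{\log p_R}$ for every $n$. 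This is what forces the upper bound down to $\frac{\log 2}{m\log p_R}$ and is the step missing from your plan.
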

}

Table \ref{tab:1} below lists the values of $\dim_H(\ub\setminus\bb)$ for $1\leq M\leq 8$. For large $M$ we have {by Theorem \ref{main:4} (iii)} the simple approximation $\dim_H(\ub\setminus\bb)\approx\log 2/\log(k+3)$, where $k$ is the greatest integer less than or equal to $M/2$. This systematically underestimates the true value, with an error slowly tending to zero. {Observe also that $\dim_H(\ub\setminus\bb) \to 0$ as $M\to\f$.}

\begin{table}[h!]
\begin{tabular}{c|c|c|c|c|c|c|c|c}
\hline
$M$&1&2&3&4&5&6&7&8\\
\hline
$\dim_H(\ub\setminus\bb)$&0.3687& 0.3396& 0.5645& 0.4750& 0.4567& 0.4088&0.4005&0.3091\\
\hline
\end{tabular}
\bigskip
\caption{The numerical calculation of $\dim_H(\ub\setminus\bb)$ for $M=1,\ldots, 8$.}
\label{tab:1}
\end{table}

In \cite{Kalle-Kong-Li-Lv-2016}, Kalle et al.~showed that $\dim_H({\ub}\cap(1,t])=\max_{q\leq t}\dim_H \pazocal{U}_q$ for all $t>1$, and they asked whether more generally it is possible to calculate $\dim_H({\ub}\cap[t_1,t_2])$ for any interval $[t_1,t_2]$. In the process of proving Theorem \ref{main:4}, we give a partial answer to their question by computing the Hausdorff dimension of the intersection of ${\ub}$ with any entropy plateau $[p_L,p_R]$ {(see Theorem \ref{thm:41})}. 

The rest of the paper is arranged as follows. In Section \ref{s2} we recall some results from unique $q$-expansions, and give the Hausdorff dimension of the symbolic univoque set $\us_q$ (see Lemma \ref{l28}). Based on these observations we characterize the left and right bifurcation sets $\bb^L$ and $\bb^R$ in Section \ref{s3}, by proving Theorems \ref{main:1}, \ref{main-prime} and \ref{main:2}. In Section \ref{s4} we prove Theorem \ref{main:4}.

\section{Unique expansions}\label{s2}

In this section we will describe the symbolic univoque set $\us_q$ and calculate its Hausdorff dimension. {Recall that $\Omega=\{0,1,\dots,M\}^\N$.
Let} $\si$ be the \emph{left shift} on $\Omega$ defined by $\si((c_i))=(c_{i+1})$. Then $(\Omega,\si)$ is a \emph{full shift}. By a \emph{word} $\c$ we mean a finite string of digits $\c=c_1\ldots c_n$ with each digit $c_i\in {\{0,1,\dots,M\}}$. For two words $\c=c_1\ldots c_m$ and $\d=d_1\ldots d_n$ we denote by $\c\d=c_1\ldots c_m d_1\ldots d_n$ their concatenation. For a positive integer $n$ we write $\c^n=\c\cdots\c$ for the $n$-fold concatenation of $\c$ with itself. Furthermore, we write $\c^\f=\c\c\cdots$ for the infinite periodic sequence with period block $\c$. For a word $\c=c_1\ldots c_m$ we set $\c^+:=c_1\ldots c_{m-1}(c_m+1)$ if $c_m<M$, and set $\c^-:=c_1\ldots c_{m-1}(c_m-1)$ if $c_m>0$.
 Furthermore, we define the \emph{reflection} of the word $\c$ by $\overline{\c}:=(M-c_1)(M-c_2)\cdots(M-c_m)$. Clearly, $\c^+, \c^-$ and $\overline{\c}$ are all words with digits from ${\{0,1,\dots,M\}}$. For a sequence $(c_i)\in\Omega$ its reflection is also a sequence in $\Omega$ defined by $\overline{(c_i)}=(M-c_1)(M-c_2)\cdots$.

 Throughout the paper we will use the \emph{lexicographical ordering} $\prec, \lle, \succ$ and $\lge$ between sequences and words. More precisely, for two sequences $(c_i), (d_i)\in\Omega$ we say $(c_i)\prec (d_i)$ or $(d_i)\succ (c_i)$ if there exists an integer $n\ge 1$ such that $c_1\ldots c_{n-1}=d_1\ldots d_{n-1}$ and $c_n<d_n$. Furthermore, we say $(c_i)\lle (d_i)$ if $(c_i)\prec (d_i)$ or $(c_i)=(d_i)$. Similarly, for two words $\c$ and $\d$ we say $\c\prec \d$ or $\d\succ\c$ if $\c0^\f\prec \d0^\f$.

Let $q\in(1,M+1]$.  Recall that $\us_q$ is the symbolic univoque set which contains all sequences $(x_i)\in\Omega$ such that $(x_i)$ is the unique $q$-expansion of   $\pi_q((x_i))$. Here
 $\pi_q$ is the projection map defined in (\ref{e11}).  The description of $\us_q$ is based on the \emph{quasi-greedy} $q$-expansion  of $1$, denoted by $\al(q)=\al_1(q)\al_2(q)\ldots$,  which is the lexicographically largest $q$-expansion of $1$ not ending with $0^\f$ (cf.~\cite{Daroczy_Katai_1993}). The following characterization of $\al(q)$ was given in \cite[Theorem 2.2]{Baiocchi_Komornik_2007} (see also \cite[Proposition 2.3]{Vries-Komornik-Loreti-2016}).

 \begin{lemma}\label{l21}
 The map $q\mapsto \al(q)$ is a strictly increasing bijection from $(1, M+1]$ onto the set of all sequences $(a_i)\in\Omega$ not ending with $0^\f$ and satisfying
 \[
 a_{n+1}a_{n+2}\ldots \lle a_1a_2\ldots \qquad\textrm{for all }\quad n\ge 0.
 \]
 Furthermore, the map $q\mapsto \al(q)$
 is left-continuous.
  \end{lemma}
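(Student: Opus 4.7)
The plan is to establish, in order: the shift admissibility of $\alpha(q)$, strict monotonicity of $q\mapsto\alpha(q)$, surjectivity onto the set of sequences with the stated properties, and finally left-continuity.

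First, the shift condition $\sigma^n\alpha(q)\preccurlyeq\alpha(q)$ for all $n\ge 0$ follows from a standard maximality argument: the dynamics defining $\alpha(q)$ (successive subtraction of the largest digit that keeps the orbit strictly positive) forces each tail to be no lex-greater than the original sequence. Second, for strict monotonicity, I would use that $q\mapsto\pi_q((a_i))$ is continuous and strictly decreasing on $(1,M+1]$ for every nonzero $(a_i)\in\Omega$. Hence for $1<p<q\le M+1$ we have $\pi_q(\alpha(p))<\pi_p(\alpha(p))=1=\pi_q(\alpha(q))$. To upgrade this numerical strict inequality to the lex statement $\alpha(p)\prec\alpha(q)$, I would invoke a comparison lemma: for admissible sequences in $\Omega$ (those satisfying $\sigma^n(\cdot)\preccurlyeq(\cdot)$), the lex and numerical orderings in base $q$ coincide. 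Applied with $\alpha(p)$ and $\alpha(q)$, both admissible by the first step, this gives $\alpha(p)\prec\alpha(q)$.

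For surjectivity, given an admissible non-terminating $(a_i)$, the strict monotonicity and continuity of $q\mapsto\pi_q((a_i))$ together with $\pi_{M+1}((a_i))\le 1$ produce a unique $q\in(1,M+1]$ with $\pi_q((a_i))=1$. The comparison lemma then forces $(a_i)=\alpha(q)$, by showing $(a_i)$ lex-dominates every other $q$-expansion of $1$ not ending in $0^\f$. Finally, for left-continuity, take $q_n\nearrow q$ in $(1,M+1]$. Monotonicity yields a pointwise limit $(a_i):=\lim_{n\to\f}\alpha(q_n)\in\Omega$; joint continuity of $(q,(x_i))\mapsto\pi_q((x_i))$ combined with $\pi_{q_n}(\alpha(q_n))=1$ gives $\pi_q((a_i))=1$; the shift admissibility passes to pointwise limits; and $(a_i)$ cannot end in $0^\f$, for if $(a_i)=a_1\ldots a_k 0^\f$ with $a_k>0$, then $\sum_{i=1}^k a_i q_n^{-i}>\sum_{i=1}^k a_i q^{-i}=1$ for $n$ large, while $\alpha(q_n)$ begins with $a_1\ldots a_k$ for those $n$, forcing $\pi_{q_n}(\alpha(q_n))>1$ and contradicting $\pi_{q_n}(\alpha(q_n))=1$. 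Surjectivity then gives $(a_i)=\alpha(q)$.

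The main obstacle is the lex-numerical comparison lemma used in the monotonicity and surjectivity steps: showing that for admissible sequences the lex and numerical orderings in base $q$ agree. This is delicate because the correspondence fails for generic sequences (e.g.\ $10^\f\succ 0M^\f$ lex but numerically smaller in any base $q<M+1$), and the proof uses admissibility precisely to exclude this carry-reversal phenomenon. Once the lemma is established, the remaining steps thread together cleanly.
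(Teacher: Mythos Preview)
The paper does not supply its own proof of this lemma; it is quoted from \cite{Baiocchi_Komornik_2007} (see also \cite{Vries-Komornik-Loreti-2016}). Your outline follows the standard route taken in those references and is essentially correct.

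One point deserves tightening. Your ``comparison lemma'' --- that for self-admissible sequences the lexicographic and numerical orders in a fixed base $q$ agree --- is slightly mis-formulated, and as you apply it in the monotonicity step there is a whiff of circularity: you do not yet know $\alpha(p)\preccurlyeq\alpha(q)$, which is precisely what would make $\alpha(p)$ a legitimate quasi-greedy $q$-expansion to which Parry-type order preservation applies. The cleaner device is the analytic fact $\pi_q(\sigma^n\alpha(q))\le 1$ for all $n\ge 0$, which comes straight from the quasi-greedy algorithm. Assuming $\alpha(p)\succ\alpha(q)$ and comparing at the first index of disagreement then gives $\pi_q(\alpha(p))\ge 1$, with equality only if $\alpha(p)$ ends in $0^\infty$; since it does not, $\pi_q(\alpha(p))>1$, contradicting $\pi_q(\alpha(p))<\pi_p(\alpha(p))=1$. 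This yields strict monotonicity directly. For surjectivity the same device first gives $(a_i)\preccurlyeq\alpha(q)$; then self-admissibility forces $\sigma^n((a_i))\preccurlyeq(a_i)\preccurlyeq\alpha(q)$ for all $n$, which is exactly the Parry condition for $(a_i)$ to be the quasi-greedy $q$-expansion of its value $1$, whence $(a_i)=\alpha(q)$. Your left-continuity argument is fine as written.
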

	
{\begin{remark}\label{rem:22}
  Let ${\mathbf A}:=\set{\al(q): q\in(1,M+1]}$. Then Lemma \ref{l21} implies that the inverse  map 
  \[\al^{-1}: {\mathbf A}\ra (1,M+1];\qquad (a_i)\mapsto \al^{-1}((a_i)) \]
  is bijective and strictly increasing. Furthermore, we can even show that $\al^{-1}$ is continuous; see the proof of Lemma \ref{l36} below. 
  \end{remark}
}

Based on the quasi-greedy expansion $\al(q)$  we give the lexicographic characterization of the symbolic univoque set $\us_q$,  which was  essentially established by Parry \cite{Parry_1960} (see also \cite{Komornik_Kong_Li_2015_1}).

\begin{lemma}
\label{l23}
Let $q\in(1,M+1]$. Then $(x_i)\in\us_q$ if and only if
\[\left\{\begin{array}{lll}
x_{n+1}x_{n+2}\ldots \prec \al(q)&\quad\textrm{whenever}& x_n<M,\\
x_{n+1}x_{n+2}\ldots \succ \overline{\al(q)}&\quad\textrm{whenever}& x_n>0.
\end{array}\right.
\]
\end{lemma}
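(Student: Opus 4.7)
The plan is to rewrite the characterization as the intersection of two classical Parry-type descriptions: $(x_i)$ is the greedy $q$-expansion of $\pi_q((x_i))$ if and only if the first lexicographic condition holds, and it is the lazy $q$-expansion if and only if the second holds. Since $\us_q$ consists precisely of those $(x_i)$ that are simultaneously the greedy (lex-largest) and lazy (lex-smallest) $q$-expansions of their value, combining the two characterizations yields the lemma.

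The preparatory tool is a $\pi_q$-lex comparison with $\al(q)$: for any $(z)\in\Omega$, if $(z)\succeq\al(q)$ then $\pi_q((z))\geq 1$, and if $(z)\prec\al(q)$ then $\pi_q((z))\leq 1$. Both follow by expanding $\pi_q((z))-1$ at the first coordinate $k$ where $(z)$ and $\al(q)$ differ, then using the estimate $0<\pi_q(\sigma^k(\al(q)))\leq 1$. The upper bound holds because $\sigma^k(\al(q))$ still satisfies Parry's condition in Lemma~\ref{l21} and is therefore the quasi-greedy expansion of some value in $(0,1]$; the strict positivity holds because $\al(q)$ does not end in $0^\f$.

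With this in hand the greedy characterization reduces to a construction. If the first lex condition fails at some $n$, i.e.\ $x_n<M$ and $\sigma^n((x_i))\succeq\al(q)$, then $\pi_q(\sigma^n((x_i)))\geq 1$, so setting $y_n=x_n+1$ and choosing the tail $(y_{n+1},y_{n+2},\ldots)$ to be any $q$-expansion of $\pi_q(\sigma^n((x_i)))-1\in[0,M/(q-1)]$ produces $(y_i)\succ(x_i)$ with the same $\pi_q$-value, so $(x_i)$ is not greedy. Conversely, if $(x_i)$ is not greedy, pick $(y_i)\succ(x_i)$ with the same value and first differing index $n$; then $x_n<M$ and the identity $(y_n-x_n)=\pi_q(\sigma^n((x_i)))-\pi_q(\sigma^n((y_i)))$ forces $\pi_q(\sigma^n((x_i)))\geq 1$. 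If $\sigma^n((x_i))\succeq\al(q)$, the first condition already fails at $n$; otherwise $\sigma^n((x_i))\prec\al(q)$, and the $\pi_q$-lex comparison pins $\pi_q(\sigma^n((x_i)))=1$. At the first position $k$ where $\sigma^n((x_i))$ and $\al(q)$ disagree, the identity
\[
\pi_q(\sigma^{n+k}((x_i)))=\pi_q(\sigma^k(\al(q)))+(\al_k(q)-x_{n+k}),
\]
together with $\al_k(q)-x_{n+k}\geq 1$ and $\pi_q(\sigma^k(\al(q)))>0$, gives $\pi_q(\sigma^{n+k}((x_i)))>1$. The $\pi_q$-lex comparison then forces $\sigma^{n+k}((x_i))\succ\al(q)$, and since $x_{n+k}<\al_k(q)\leq M$ the first lex condition fails at $n+k$, as required.

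The lazy characterization is obtained from the greedy one by reflection: replacing $(x_i)$ by $\overline{(x_i)}$ and using $\pi_q(\overline{(x_i)})=M/(q-1)-\pi_q((x_i))$ swaps greedy with lazy and $\al(q)$ with $\overline{\al(q)}$, converting the second condition of the lemma into the first. The main obstacle throughout is the non-strictness of the $\pi_q$-lex comparison: equality $\pi_q((z))=1$ is possible for sequences strictly below $\al(q)$, so the converse half of the greedy characterization must propagate one differing coordinate forward and invoke the Parry-theoretic fact that $\al(q)$ has no $0^\f$ tail to upgrade the inequality to the strict form needed for the contradiction.
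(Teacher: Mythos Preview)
The paper does not supply its own proof of this lemma; it is stated with references to Parry~\cite{Parry_1960} and~\cite{Komornik_Kong_Li_2015_1}. Your overall plan---characterize the greedy and lazy expansions separately by Parry-type inequalities and intersect, handling the lazy half by reflection---is the standard route and is sound in outline.

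The gap is in your preparatory tool: the implication ``$(z)\prec\al(q)\Rightarrow\pi_q((z))\le 1$'' is false for general $(z)\in\Omega$. For instance, with $M=1$ and $q=3/2$ one has $\al_1(q)=1$, so $(z)=01^\f\prec\al(q)$, yet $\pi_q((z))=1/(q(q-1))=4/3>1$. Your justification bounds $\pi_q(\sigma^k(\al(q)))$ but places no control on $\pi_q(\sigma^k((z)))$, which can be as large as $M/(q-1)$. You invoke this false direction twice in the converse half of the greedy characterization: first to pin $\pi_q(\sigma^n((x_i)))=1$ (without which your displayed identity does not hold as written), and then to deduce $\sigma^{n+k}((x_i))\succ\al(q)$ from $\pi_q(\sigma^{n+k}((x_i)))>1$. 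Neither step is valid. The argument is repairable by iteration: from $\pi_q(\sigma^{n}((x_i)))\ge 1$ and $\sigma^{n}((x_i))\prec\al(q)$ with first disagreement at position $k$, subtracting the two expansions gives
\[
\pi_q(\sigma^{n+k}((x_i)))=\pi_q(\sigma^{k}(\al(q)))+(\al_k(q)-x_{n+k})+q^{k}\bigl(\pi_q(\sigma^{n}((x_i)))-1\bigr)>1,
\]
and $x_{n+k}<\al_k(q)\le M$. If the first lex condition never fails, one obtains indices $n=n_1<n_2<\cdots$ with excesses $\ep_j:=\pi_q(\sigma^{n_j}((x_i)))-1$ satisfying $\ep_{j+1}>q\,\ep_j$ and $\ep_2>0$, forcing $\ep_j\to\f$ and contradicting the bound $\pi_q\le M/(q-1)$.
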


Note by Lemma \ref{l21} that when $q$ is increasing the quasi-greedy expansion $\al(q)$ is also increasing in the lexicographical ordering.  By Lemma \ref{l23} it follows that the set-valued map $q\mapsto \us_q$ is also increasing, i.e., $\us_p\subseteq\us_q$ when $p<q$.

Recall from \cite{Komornik_Loreti_2002} that the Komornik-Loreti constant  $q_{KL}=q_{KL}(M)$ is the smallest element of $\ub^R$, and satisfies
\begin{equation}\label{eq:21}
\al(q_{KL})=\la_1\la_2\ldots,
\end{equation}
where for each $i\ge 1$,
\begin{equation} 
\label{eq:22}
\la_i=\la_i(M):=\begin{cases}
k+\tau_i-\tau_{i-1} & \qquad\textrm{if \quad$M=2k$},\\
k+\tau_i & \qquad\textrm{if \quad$M=2k+1$}.
\end{cases}
\end{equation}
Here $(\tau_i)_{i=0}^\f=0110100110010110\ldots$ is the classical \emph{Thue-Morse sequence} (cf.~\cite{Allouche_Shallit_1999}). We emphasize that the sequence $(\la_i)$ depends on $M$. {The following recursive relation of $(\la_i)$ was established in \cite{Komornik_Loreti_2002} (see also \cite{Kong_Li_2015}): 
\begin{equation}\label{eq:23}
\la_{2^n+1}\ldots \la_{2^{n+1}}=\overline{\la_1\ldots \la_{2^n}}\,^+\qquad\textrm{for all}\quad n\ge 0.
\end{equation}
By  (\ref{eq:21}) and (\ref{eq:22}) it follows that $q_{KL}(M)\ge (M+2)/2$ for all $M\ge 1$ (see also \cite{Baker-2014}), and the map $M\mapsto q_{KL}(M)$ is strictly increasing. 
 
\begin{example}
\label{ex:24} 
{The following values of $q_{KL}(M)$ will be needed in the proof of Theorem \ref{main:4} in Section \ref{s4}.}
\begin{enumerate}
\item {Let }$M=1$. Then by (\ref{eq:22}) we have $\la_1=1$. By (\ref{eq:21}) and (\ref{eq:23}) it follows that 
\[
\al(q_{KL}(1))=1101\,0011\;00101101\ldots=(\tau_i)_{i=1}^\f.
\]
This gives  $q_{KL}(1)\approx 1.78723$.

\item {Let }$M=2$. Then by (\ref{eq:22}) we have $\la_1=2$, and  by (\ref{eq:21}) and (\ref{eq:23}) that 
\[
\al(q_{KL}(2))=2102\,0121\;01202102\ldots.
\]
So $q_{KL}(2)\approx 2.53595$.

\item {Let }$M=3$. Then by (\ref{eq:22}) we have $\la_1=2$, and  by (\ref{eq:21}) and (\ref{eq:23}) that
\[
\al(q_{KL}(3))=2212\,1122\;11212212\ldots.
\]
Hence, $q_{KL}(3)\approx 2.91002$. 
\end{enumerate}
\end{example}
}

Now we recall from \cite{Komornik_Kong_Li_2015_1} the following result for the Hausdorff dimension of the univoque set $\u_q$.

\begin{lemma} \label{l25}\mbox{}
\begin{enumerate}
\item[{\rm(i)}]  For any $q\in(1,M+1]$ we have
\[
\dim_H\u_q=\frac{h(\us_q)}{\log q}.
\]
\item[{\rm(ii)}]  The entropy function $H: q\mapsto h(\us_q)$ is a Devil's staircase in $(1,M+1]$:
\begin{itemize}
\item $H$ is {non-decreasing} and continuous from $(1,M+1]$ onto $[0, {1}]$;
\item $H$ is { locally constant } almost everywhere in $(1,M+1]$.
\end{itemize}
\item [{\rm(iii)}] $H(q)>0$ if and only if $q>q_{KL}$. Furthermore, $H(q)=\log (M+1)$ if and only if $q=M+1$.
\end{enumerate}
\end{lemma}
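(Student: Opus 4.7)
My plan for Lemma \ref{l25} rests on three ingredients: a bi-H\"older comparison between the symbolic and geometric sides, control of the fibers of $F$ via the quasi-greedy expansion, and the specific lexicographic structure of $\al(q)$ at $q_{KL}$ and at $M+1$.

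For part (i), I would first show that $\pi_q\colon \us_q\to\u_q$ is bi-H\"older when $\us_q$ carries the metric $\rho$ from \eqref{e12} and $\u_q$ carries the Euclidean metric. The upper bound is immediate: if $(x_i),(y_i)\in\us_q$ first disagree at index $n+1$, then $|\pi_q((x_i))-\pi_q((y_i))|\le M q^{-n}/(q-1)$. The matching lower bound is a quantitative version of uniqueness: by Lemma \ref{l23}, the shifted tail $\si^n((x_i))$ lies strictly between $\overline{\al(q)}$ and $\al(q)$, so the images differ by at least $c(q)q^{-n}$ for some $c(q)>0$. This gives bi-H\"older equivalence with exponent $\log q/\log(M+1)$, whence $\dim_H\u_q=\dim_H^\rho(\us_q)\cdot\log(M+1)/\log q$. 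Combined with the standard identity $\dim_H^\rho(\us_q)=h(\us_q)/\log(M+1)$ for subshifts (valid because the Parry conditions of Lemma \ref{l23} make $\us_q$ a shift space with the right Moran structure), the formula follows.

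For part (ii), monotonicity of $H$ is immediate from Lemmas \ref{l21} and \ref{l23}. Left-continuity of $H$ reduces to left-continuity of $q\mapsto\al(q)$ (Lemma \ref{l21}): if $p_k\nearrow q$, then $\al(p_k)\to\al(q)$ coordinatewise, and Lemma \ref{l23} together with a standard pressure/entropy continuity argument on the resulting nested sequence of subshifts yields $h(\us_{p_k})\to h(\us_q)$. For local constancy almost everywhere I would invoke the result of de Vries and Komornik that the set-valued map $F$ is locally constant off a Lebesgue-null set, so $H=h\circ F$ inherits the same property; that $H$ surjects onto $[0,1]$ then follows by combining (iii) below with the intermediate value property produced by continuity.

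Part (iii) is the cleanest. The Komornik-Loreti constant is defined precisely as the smallest $q$ for which $1\in\u_q$; below $q_{KL}$ the Parry conditions of Lemma \ref{l23} force $\us_q$ to be at most countable (the Glendinning-Sidorov/Komornik-Loreti analysis of $\al(q)\prec(\la_i)$), yielding $h(\us_q)=0$, while for $q>q_{KL}$ one exhibits a full shift on two symbols embedded in $\us_q$, giving $h(\us_q)>0$. For the maximum, at $q=M+1$ every sequence not ending in $0^\f$ or $M^\f$ lies in $\us_{M+1}$, so $h(\us_{M+1})=\log(M+1)$; conversely, if $q<M+1$ then $\al(q)\prec M^\f$, which forbids a fixed finite word in every element of $\us_q$ and forces $h(\us_q)<\log(M+1)$ by a simple counting bound.

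The main obstacle I anticipate is right-continuity of $H$ in part (ii). The set-valued map $F$ is genuinely \emph{not} right-continuous: at the left endpoint $q_0$ of a plateau $[q_0,q_0^*]$ the fiber $\us_{q_0}$ can be strictly smaller than $\us_{q_0^*}$. One must therefore show that the discrepancy $\#B_n(\us_{q+\ep})-\#B_n(\us_q)$ grows subexponentially as $\ep\searrow 0$. This can be arranged by decomposing $\us_{q+\ep}\setminus\us_q$ according to the first block of length $n$ that violates the Parry condition for $\us_q$, and using that the length of the common prefix between $\al(q+\ep)$ and $\al(q)$ tends to infinity as $\ep\to 0^+$ whenever $\al(q)$ does not end in $0^\f$; this forces the ``difference set'' on level $n$ to have at most polynomially many representatives per allowed admissible word of $\us_q$, preserving the exponential growth rate.
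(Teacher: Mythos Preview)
The paper does not prove this lemma: it is simply recalled from \cite{Komornik_Kong_Li_2015_1} without argument (note the sentence ``Now we recall from \cite{Komornik_Kong_Li_2015_1} the following result\ldots'' immediately preceding the statement). There is therefore no proof in the paper to compare your sketch against.

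That said, two remarks on your outline. Your part (i) hinges on the ``standard identity'' $\dim_H^\rho(\us_q)=h(\us_q)/\log(M+1)$ for subshifts, but $\us_q$ is in general \emph{not} closed (the paper says so explicitly just before introducing $\vs_q$ in (\ref{eq:25})), so it is not a subshift and the identity is not automatic. Indeed, in this paper the equality $\dim_H\us_q=h(\us_q)$ is Lemma~\ref{l28}, which is \emph{derived from} Lemma~\ref{l25} via Lemmas~\ref{l26} and~\ref{l27}, not the other way round. To make your route self-contained you would need an independent proof that $\dim_H\vs_q=h(\vs_q)$ (e.g.\ via a measure of maximal entropy and the mass distribution principle on the closed subshift $\vs_q$), and then transfer this to $\us_q$ and $\u_q$.

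For part (ii), the right-continuity is indeed the sticking point, and your block-counting heuristic is not yet an argument: the claim that the discrepancy contributes only polynomially many words per admissible word of $\us_q$ is exactly what needs proof. In the original reference this is handled more cleanly by passing to the closed systems $\vs_q$, using $\vs_q=\bigcap_{r>q}\vs_r$, and invoking upper semicontinuity of entropy for decreasing intersections of subshifts. Your sketch for part (iii) is essentially correct.
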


We also need the following lemma for the Hausdorff dimension under H\"{o}lder continuous maps (cf.~\cite{Falconer_1990}).

\begin{lemma}\label{l26}
Let $f: (X, \rho_X)\ra (Y, \rho_Y)$ be a H\"{o}lder map between two metric spaces, i.e., there exist two constants $C>0$ and $\xi>0$ such that
\[
\rho_Y(f(x), f(y))\le C \rho_X(x, y)^\xi\quad\textrm{for any }x, y\in X.
\]
Then $\dim_H f(X)\le \frac{1}{\xi}\dim_H X.$
\end{lemma}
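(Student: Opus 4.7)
The plan is to argue directly from the definition of Hausdorff dimension via Hausdorff measures. Fix any $s>\dim_H X$; by definition of Hausdorff dimension, the $s$-dimensional Hausdorff measure $\mathcal H^s(X)=0$. The idea is to transfer a good cover of $X$ by sets of small diameter to a cover of $f(X)$, and bound the diameters of the images using the Hölder hypothesis.

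More precisely, first I would fix $\eta>0$ arbitrary and, for any $\de>0$, choose a cover $\{U_i\}_{i\in\N}$ of $X$ with $\mathrm{diam}(U_i)\le\de$ and $\sum_{i}\mathrm{diam}(U_i)^s<\eta$, which exists because $\mathcal H^s(X)=0$. Without loss of generality each $U_i\subseteq X$. Then $\{f(U_i)\}_{i\in\N}$ is a cover of $f(X)$. By the Hölder hypothesis,
\[
\mathrm{diam}(f(U_i))\le C\,\mathrm{diam}(U_i)^\xi \le C\de^\xi,
\]
so each $f(U_i)$ has diameter at most $C\de^\xi$, which can be made arbitrarily small by shrinking $\de$. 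Raising the previous inequality to the power $s/\xi$ and summing gives
\[
\sum_{i}\mathrm{diam}(f(U_i))^{s/\xi}\le C^{s/\xi}\sum_{i}\mathrm{diam}(U_i)^s<C^{s/\xi}\eta.
\]

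Since $\eta>0$ is arbitrary and the mesh $C\de^\xi\to 0$ as $\de\to 0$, this shows that the $(s/\xi)$-dimensional Hausdorff measure of $f(X)$ is zero, so $\dim_H f(X)\le s/\xi$. Letting $s\searrow\dim_H X$ yields the desired bound $\dim_H f(X)\le \frac{1}{\xi}\dim_H X$.

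There is no real obstacle here; the statement is a textbook fact (it appears essentially verbatim in Falconer's book, cited by the authors) and the only small care needed is that one can replace a cover of $X$ with sets from an ambient space by a cover of $X$ by subsets of $X$ (intersecting with $X$ does not increase diameters) before applying $f$. Accordingly, I would expect the authors either to give this short argument or to simply quote \cite{Falconer_1990}.
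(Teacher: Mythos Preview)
Your argument is correct and is the standard textbook proof. The paper does not give its own proof of this lemma; it simply states the result with a reference to Falconer's book (cf.~\cite{Falconer_1990}), exactly as you anticipated in your final remark.
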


Recall the metric $\rho$ from (\ref{e12}). It will be convenient to introduce a more general family of (mutually equivalent) metrics $\{\rho_q:q>1\}$ {on $\Omega$} defined by
\begin{equation*}
\rho_q((c_i),(d_i)):=q^{-\inf\{i\ge 1:c_i\neq d_i\}}, \qquad q>1.
\end{equation*}
Then $(\Omega, \rho_q)$ is a compact metric space.
Let $\dim_H^{(q)}$ denote Hausdorff dimension on $\Omega$ with respect to the metric $\rho_q$, so 
\[\dim_H^{(M+1)}E=\dim_H E\]
 for any subset $E\subseteq\Omega$. 
For $p>1$ and $q>1$,
\begin{equation*}
\rho_q((c_i),(d_i))=\rho_p((c_i),(d_i))^{\log q/\log p},
\end{equation*}
and {by Lemma \ref{l26}} this gives the useful relationship 
\begin{equation}
\dim_H^{(p)}E=\frac{\log q}{\log p}\dim_H^{(q)}E, \qquad E\subseteq\Omega.
\label{eq:24}
\end{equation}

The following result is well known (see \cite[Lemma 2.7]{Jordan-Shmerkin-Solomyak-2011} or \cite[Lemma 2.2]{Allaart-2017}):

\begin{lemma} \label{l27}
For each $q\in(1, M+1)$, the map $\pi_q$ is Lipschitz on $(\Omega,\rho_q)$, and the restriction
\begin{equation*}
\pi_q: (\mathbf{U}_q,\rho_q)\to (\pazocal{U}_q,|.|);\qquad \pi_q((x_i))=\sum_{i=1}^\infty \frac{x_i}{q^i}
\end{equation*}
is bi-Lipschitz, where $|.|$ denotes the Euclidean metric on $\R$. 
\end{lemma}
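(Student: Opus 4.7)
I would prove the two assertions separately. For the Lipschitz part on $(\Omega,\rho_q)$, if $(x_i),(y_i)\in\Omega$ first disagree at coordinate $n$ then $\rho_q((x_i),(y_i))=q^{-n}$, and the digit bound $|x_i-y_i|\le M$ together with the geometric series give
\[
|\pi_q((x_i))-\pi_q((y_i))|\le\sum_{i\ge n}\frac{M}{q^i}=\frac{Mq}{q-1}\,q^{-n}=\frac{Mq}{q-1}\,\rho_q((x_i),(y_i)),
\]
so $\pi_q$ is Lipschitz with constant $Mq/(q-1)$; this already supplies the upper bound needed for bi-Lipschitzness on $\us_q$.

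For the matching lower bound, let $(x_i),(y_i)\in\us_q$ first disagree at coordinate $n$, say $x_n<y_n$, and decompose
\[
q^n\bigl(\pi_q((y_i))-\pi_q((x_i))\bigr)=(y_n-x_n)+\pi_q(\si^n(y_i))-\pi_q(\si^n(x_i)).
\]
Since $\us_q$ is shift-invariant (immediate from Lemma \ref{l23}), $\si^n(x_i),\si^n(y_i)\in\us_q$; and since $x_n<M$ and $y_n>0$, Lemma \ref{l23} yields the lexicographic inequalities $\si^n(x_i)\prec\al(q)$ and $\si^n(y_i)\succ\overline{\al(q)}$.

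The pivotal step is to upgrade these to numerical inequalities; this would fail for arbitrary sequences in $\Omega$ when $q<M+1$, but succeeds on $\us_q$ because of uniqueness. Indeed, each $(z_i)\in\us_q$ must coincide with the greedy $q$-expansion of $\pi_q((z_i))$, since the greedy expansion is some $q$-expansion and the univoque condition forces it to be the only one. The greedy expansion map is strictly lex-order preserving on $[0,M/(q-1)]$, and the greedy expansion of $1$ is lex-$\lge\al(q)$. Hence $\pi_q((z_i))>1$ would force $(z_i)\succ\al(q)$; contrapositively, $(z_i)\lle\al(q)$ implies $\pi_q((z_i))\le 1$. The symmetric argument via the lazy expansion shows that $(z_i)\lge\overline{\al(q)}$ implies $\pi_q((z_i))\ge M/(q-1)-1$.

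Substituting into the display above yields
\[
q^n\bigl(\pi_q((y_i))-\pi_q((x_i))\bigr)\ge 1+\Bigl(\frac{M}{q-1}-1\Bigr)-1=\frac{M-q+1}{q-1},
\]
which is strictly positive precisely because $q<M+1$. Dividing by $q^n$ furnishes the lower Lipschitz constant $(M-q+1)/(q-1)$ and completes the bi-Lipschitz claim. The main obstacle is the lex-to-numeric conversion, which rests essentially on the uniqueness encoded in $\us_q$ and on the strict inequality $q<M+1$, since $(M-q+1)/(q-1)$ degenerates to zero at $q=M+1$.
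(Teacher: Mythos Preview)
Your proof is correct. The paper itself does not prove this lemma but simply cites it as well known from \cite{Jordan-Shmerkin-Solomyak-2011} and \cite{Allaart-2017}, so there is no in-paper argument to compare against.

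Your approach is essentially the standard one found in those references: the upper Lipschitz bound is an immediate geometric-series estimate, and the lower bound hinges on converting the lexicographic inequalities $\si^n(x_i)\prec\al(q)$ and $\si^n(y_i)\succ\overline{\al(q)}$ into the numerical bounds $\pi_q(\si^n(x_i))\le 1$ and $\pi_q(\si^n(y_i))\ge M/(q-1)-1$. You correctly identify that this conversion is exactly where uniqueness is used (a sequence in $\us_q$ is simultaneously the greedy and the lazy expansion of its value, and both of these coding maps are strictly order-preserving), and you correctly observe that the resulting lower constant $(M-q+1)/(q-1)$ is positive precisely because $q<M+1$. The shift-invariance of $\us_q$, which you invoke, is indeed immediate from the form of the inequalities in Lemma~\ref{l23}.
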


{Observe that the Hausdorff dimension does not exceed the lower box dimension (cf.~\cite{Falconer_1990}). This implies that $
\dim_H E\le h(E)
$  for any set $E\subset\Omega$.
{Using} Lemmas \ref{l25}--\ref{l27} we show that equality holds for $\us_q$.}

\begin{lemma} \label{l28}
Let $q\in(1, M+1]$. Then
\[
\dim_H\us_q={h(\us_q)}.
\]
\end{lemma}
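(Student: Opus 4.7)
The inequality $\dim_H \us_q \le h(\us_q)$ is already noted in the paragraph preceding the lemma (Hausdorff dimension never exceeds the lower box dimension, and the latter equals the topological entropy on a shift space). So the task reduces to establishing the reverse inequality $\dim_H \us_q \ge h(\us_q)$, and my plan is to combine Lemma \ref{l25}(i), Lemma \ref{l27}, and the metric-conversion identity \eqref{eq:24}.

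First I would dispose of the trivial boundary case $q=M+1$: here $\us_{M+1}$ differs from $\Omega$ by only countably many sequences, so $\dim_H \us_{M+1} = \dim_H \Omega = 1 = h(\us_{M+1})$ (recall that we use base $M+1$ logarithms, so $\log(M+1)=1$, which agrees with Lemma \ref{l25}(iii)).

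For $q\in(1, M+1)$, the key observation is that Lemma \ref{l27} gives a bi-Lipschitz bijection $\pi_q : (\us_q, \rho_q) \to (\u_q, |\cdot|)$, and bi-Lipschitz maps preserve Hausdorff dimension. Hence
\[
\dim_H^{(q)} \us_q = \dim_H \u_q = \frac{h(\us_q)}{\log q},
\]
where the second equality is Lemma \ref{l25}(i). Now I invoke the metric-conversion identity \eqref{eq:24} with $p=M+1$:
\[
\dim_H \us_q = \dim_H^{(M+1)} \us_q = \frac{\log q}{\log(M+1)}\,\dim_H^{(q)} \us_q = \log q \cdot \frac{h(\us_q)}{\log q} = h(\us_q),
\]
using once more that $\log(M+1) = 1$ in our base $M+1$ logarithm convention. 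This completes the proof.

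There is no substantive obstacle here; the argument is essentially a bookkeeping exercise that correctly tracks how the Hausdorff dimension on $\Omega$ (measured with the metric $\rho = \rho_{M+1}$) relates to the Hausdorff dimension on the real-line image $\u_q$ under $\pi_q$. The only point requiring a moment of care is that Lemma \ref{l27} is stated for $q<M+1$, which is why I isolate the endpoint $q=M+1$ at the outset and handle it by direct inspection.
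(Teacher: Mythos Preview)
Your proof is correct and follows essentially the same route as the paper's own argument: isolate the endpoint $q=M+1$, then for $q\in(1,M+1)$ combine the bi-Lipschitz property of $\pi_q$ (Lemma~\ref{l27}), the dimension formula $\dim_H\u_q=h(\us_q)/\log q$ (Lemma~\ref{l25}(i)), and the metric-conversion identity~\eqref{eq:24}. The only cosmetic difference is that you frame the argument as proving the reverse inequality $\dim_H\us_q\ge h(\us_q)$, whereas the paper simply computes $\dim_H\us_q$ directly and obtains equality in one chain.
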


{\begin{proof}
For $q=M+1$, one checks easily that 
\[
\dim_H\us_{M+1}={h(\us_{M+1})}=1.
\]
Let $q\in(1, M+1)$. By Lemmas \ref{l27} and \ref{l26}, $\dim_H^{(q)}\us_q=\dim_H \pazocal{U}_q$. So \eqref{eq:24},  {Lemmas \ref{l27}  and \ref{l25}} give
{\begin{align*}
\dim_H\us_q=\dim_H^{(M+1)}\us_q=\frac{\log q}{\log (M+1)}\dim_H^{(q)}\us_q&= {\log q} \dim_H \pazocal{U}_q= {h(\us_q)},
\end{align*}
}
as desired. We emphasize that the base for our logarithms is $M+1$. 
\end{proof}
}

Note that the symbolic univoque set $\us_q$ is not always closed. Inspired by the works of de Vries and Komornik \cite{DeVries_Komornik_2008} and Komornik et al.~\cite{Komornik_Kong_Li_2015_1} we introduce the set
\begin{equation}\label{eq:25}
\vs_q:=\set{(x_i)\in\Omega: \overline{\al(q)}\lle x_{n+1}x_{n+2}\ldots \lle \al(q)\textrm{ for all }n\ge 0}.
\end{equation}

We have the following relationship between $\vs_q$ and $\us_q$.

\begin{lemma}
\label{l29}
For any $0<p<q\le M+1$ we have
\[
\dim_H\vs_q=\dim_H\us_q \qquad\mbox{and} \qquad \dim_H(\vs_q\setminus\vs_p)=\dim_H(\us_q\setminus\us_p).
\]
\end{lemma}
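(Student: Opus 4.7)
The plan is to exploit the fact that $\vs_q\setminus\us_q$ is countable, together with the entropy--dimension correspondence for subshifts, to establish both equalities.

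First I would show that $\vs_q\setminus\us_q$ is countable. For any $(x_i)$ in this set, the lexicographic bounds $\overline{\al(q)}\lle \sigma^n((x_i))\lle \al(q)$ for every $n\ge 0$ coming from the definition of $\vs_q$, combined with the negation of the univocity conditions in Lemma~\ref{l23}, force the existence of some $n\ge 1$ such that either $\sigma^n((x_i))=\al(q)$ (with $x_n<M$) or $\sigma^n((x_i))=\overline{\al(q)}$ (with $x_n>0$); this is because under the $\vs_q$ bounds the ``$\succeq$'' or ``$\preceq$'' failure must collapse to equality. For each fixed $n$ there are at most $(M+1)^n$ admissible prefixes $x_1\cdots x_n$, so $\vs_q\setminus\us_q$ is a countable union of finite sets.

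From this, the first equality $\dim_H \vs_q=\dim_H\us_q$ follows. The decomposition $\vs_q=(\vs_q\cap\us_q)\cup(\vs_q\setminus\us_q)$ with the second piece of dimension zero gives $\dim_H\vs_q\le\dim_H\us_q$ directly. For the reverse inequality I would use that $\vs_q$ is a closed shift-invariant subshift, so $\dim_H\vs_q=h(\vs_q)$ by a standard mass-distribution argument on cylinders of length $n$ (using that they have $\rho_{M+1}$-diameter $(M+1)^{-n}$). The countable-difference bound above implies $\#B_n(\vs_q)\le\#B_n(\us_q)+$ polynomially many extra length-$n$ words (each extra one arising from a prefix of the special sequences just described), hence $h(\vs_q)=h(\us_q)$; combining with $\dim_H\us_q=h(\us_q)$ from Lemma~\ref{l28} closes the loop.

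For the second equality I would apply the elementary set-theoretic inclusion
\[
(\vs_q\setminus\vs_p)\bigtriangleup(\us_q\setminus\us_p)\subseteq(\vs_q\bigtriangleup\us_q)\cup(\vs_p\bigtriangleup\us_p).
\]
Since both $\vs_q\setminus\us_q$ and $\vs_p\setminus\us_p$ are countable by the argument above, the ``$\vs\setminus\us$'' halves of the right-hand side contribute dimension zero. The main obstacle is that the opposite halves $\us_q\setminus\vs_q$ and $\us_p\setminus\vs_p$ need not be countable (for instance, when $M\ge 2$ and $\al_1(q)<M$, every sequence in $\us_q$ starting with the digit $M$ lies outside $\vs_q$), so the naive countability argument does not suffice. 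I would handle this by repeating the subword-complexity argument of the previous paragraph for the difference sets themselves, using Lemma~\ref{l27} to transfer the bi-Lipschitz/subshift structure to the portion $\us_q\cap\vs_q$ of $\us_q\setminus\us_p$ that survives intersection with $\vs_q$; the countable exceptional sequences then contribute nothing to Hausdorff dimension, giving the desired equality.
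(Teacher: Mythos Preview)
Your argument for the first equality is circular. From the countability of $\vs_q\setminus\us_q$ you correctly get $\dim_H\vs_q\le\dim_H\us_q$. For the reverse you claim $h(\vs_q)=h(\us_q)$, but your block-counting bound $\#B_n(\vs_q)\le\#B_n(\us_q)+\text{poly}(n)$ only yields $h(\vs_q)\le h(\us_q)$, which combined with $\dim_H\vs_q=h(\vs_q)$ and Lemma~\ref{l28} just reproduces the inequality you already had. You never bound $\#B_n(\us_q)$ in terms of $\#B_n(\vs_q)$; indeed $\us_q\setminus\vs_q$ is typically uncountable (as you yourself observe), and words such as $M^n$ or $0^n$ lie in $B_n(\us_q)\setminus B_n(\vs_q)$ whenever $\al_1(q)<M$, so the missing direction is not automatic.

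For the second equality the gap is more serious. Your proposed fix of ``repeating the subword-complexity argument for the difference sets'' cannot work as stated, because $\vs_q\setminus\vs_p$ and $\us_q\setminus\us_p$ are not shift-invariant, so neither topological entropy nor the entropy--dimension identity applies to them. Lemma~\ref{l27} is about the projection $\pi_q$ to $\R$ and says nothing about the relationship between $\us_q$ and $\vs_q$.

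The idea you are missing, and which the paper uses, is a structural decomposition: up to a countable set, $\us_q$ is a \emph{disjoint} countable union $\bigcup_i g_i(\vs_q)$, where each $g_i$ prepends a fixed word of the form $a$, $M^m b$, or $0^m c$ (recording the initial segment of a sequence in $\us_q$ before its orbit enters and remains in the core $\vs_q$). The key point is that the family $\{g_i\}$ is independent of $q$, so the same decomposition holds for $\us_p$; disjointness then gives $\us_q\setminus\us_p\sim\bigcup_i g_i(\vs_q\setminus\vs_p)$. Since each $g_i$ is a similarity of the symbolic metric, countable stability of Hausdorff dimension yields both equalities simultaneously. Your approach never identifies this prefix structure, which is precisely what compensates for the uncountability of $\us_q\setminus\vs_q$.
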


\begin{proof}
By Lemma \ref{l23} it follows that for each $q\in(1, 2]$ the set $\us_q$ is a countable union of affine copies of $\vs_q$ up to a countable set (see also \cite[Lemma 3.2]{Kalle-Kong-Li-Lv-2016}), i.e., there exists a sequence of affine maps $\set{g_i}_{i=1}^\f$ on $\Omega$ of the form
\[
x_1x_2\ldots\mapsto a x_1x_2\ldots,\qquad x_1x_2\ldots \mapsto M^m b x_1x_2\ldots \qquad\mbox{or}\qquad x_1x_2\ldots \mapsto 0^m c x_1x_2\ldots,
\]
where  $a\in\set{1,2,\ldots, M-1}$, $b\in\set{0,1,\ldots, M-1}$, $c\in\set{1,2,\ldots, M}$  and $m=1,2,\ldots$, such that
\begin{equation}
\label{eq:26}
\us_q\sim\bigcup_{i=1}^\f g_i(\vs_q),
\end{equation}
where we write $A\sim B$ to mean that the symmetric difference $A \bigtriangleup B$ is at most countable.
 Since the Hausdorff dimension is stable under affine maps (cf.~\cite{Falconer_1990}), this implies  $\dim_H\vs_q=\dim_H\us_q$.

Furthermore, for any $1<p<q\le M+1$ we have $\us_p\subseteq\us_q$ and $\vs_p\subseteq\vs_q$, so $g_i(\vs_p)\subseteq g_i(\vs_q)$ for all $i\ge 1$. 
Note that for $i\ne j$ the intersection  $g_i(\vs_q)\cap g_j(\vs_q)=\emptyset$.
Then by (\ref{eq:26}) it follows that
\begin{align*}
\us_q\setminus\us_p &\sim \bigcup_{i=1}^\f g_i(\vs_q)\setminus\bigcup_{i=1}^\f g_i(\vs_p)\\
&=\bigcup_{i=1}^\f \big(g_i(\vs_q)\setminus g_i(\vs_p)\big)=\bigcup_{i=1}^\f g_i(\vs_q\setminus\vs_p).
\end{align*}
We conclude that $\dim_H(\us_q\setminus\us_p)=\dim_H(\vs_q\setminus\vs_p)$.
\end{proof}

\section{Characterizations of  $\bb^L$ and $\bb^R$}\label{s3}

Recall from \eqref{e16} that $\bb^L$ and $\bb^R$ are the left and right bifurcation sets of $H$. In this section we will characterize the sets $\bb^L$ and $\bb^R$, and prove Theorems \ref{main:1}, \ref{main-prime} and \ref{main:2}. Since the theorems are very similar, we will prove only Theorem \ref{main:1} in full detail, and comment briefly on the proofs of Theorems \ref{main-prime} and \ref{main:2}.
 
Recall the definition of $q_\star(M)$ from \eqref{e17}. Its significance derives from the fact that 
{\[
\al(q_\star(M))=\left\{
\begin{array}{lll}
(k+2)k^\f&\qquad\textrm{if}\quad& M=2k+1,\\
(k+2)(k-1)^\f&\qquad\textrm{if}\quad & M=2k.
\end{array}
\right.
\]
}
By (\ref{eq:21}) and Lemma \ref{l21} it follows in particular that $q_\star(M)>q_{KL}$.

 Recall  that a closed interval $[p_L, p_R]\subseteq(q_{KL}, M+1]$ is an entropy plateau if it is a maximal interval on which $H$ is constant. 
 The following lemma was implicitly proven in \cite{AlcarazBarrera-Baker-Kong-2016}.
 
\begin{lemma}\label{l31}
Let $[p_L, p_R]\subset(q_{KL}, M+1]$ be an entropy plateau.
\begin{enumerate}[{\rm(i)}]
\item Then there exists a word $a_1\ldots a_m$ satisfying  $\overline{a_1}<a_1$ and
\[
\overline{a_1\ldots a_{m-i}}\lle a_{i+1}\ldots a_m\prec a_1\ldots a_{m-i}\qquad\textrm{for all}\quad 1\le i<m,
\] such that
\begin{equation*}
\label{e32}
\al(p_L)=(a_1\ldots a_m)^\f\qquad\textrm{and}\qquad \al(p_R)=a_1\ldots a_m^+(\overline{a_1\ldots a_m})^\f.
\end{equation*}
\item Let $m\ge 1$ be defined as in (i). Then 
\[
h(\us_{p_L})\ge \frac{\log 2}{m},
\]
where equality holds if and only if $M=2k+1\ge 3$ and $[p_L, p_R]=[k+2, q_\star(M)]$.
\end{enumerate}
\end{lemma}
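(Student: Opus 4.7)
Part (i) is essentially a repackaging of results on entropy plateaus that are implicit in \cite{AlcarazBarrera-Baker-Kong-2016}. My plan would be to invoke that analysis directly: the quasi-greedy expansion at the left endpoint of any entropy plateau must be purely periodic with period block $a_1\ldots a_m$ satisfying the stated admissibility inequalities, and the quasi-greedy expansion at the right endpoint is produced by the standard Thue--Morse-type perturbation $\al(p_R)=a_1\ldots a_m^+(\overline{a_1\ldots a_m})^\f$. The admissibility conditions from Lemma~\ref{l21} force $a_{i+1}\ldots a_m\prec a_1\ldots a_{m-i}$, while the fact that $[p_L,p_R]$ is \emph{maximal} where $H$ is constant (together with Lemma~\ref{l23} or \eqref{eq:25}) forces the symmetric condition $\overline{a_1\ldots a_{m-i}}\preceq a_{i+1}\ldots a_m$, expressing that the block admits its own reflection as an immediate successor within the lexicographic bounds.

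For the lower bound in part (ii), I would exhibit a full binary $m$-block subshift inside $\vs_{p_L}$. Set $\a:=a_1\ldots a_m$ and $\overline{\a}:=\overline{a_1\ldots a_m}$, and let $\Sigma_{\a}$ be the shift-invariant closure of all concatenations $\mathbf b^{(1)}\mathbf b^{(2)}\ldots$ with $\mathbf b^{(j)}\in\set{\a,\overline{\a}}$. To verify $\Sigma_{\a}\subseteq\vs_{p_L}$, pick $(x_i)\in\Sigma_{\a}$, write $n=jm+i$ with $0\le i<m$, and check the two inequalities in \eqref{eq:25} by splitting into the four cases determined by $(\mathbf b^{(j+1)},\mathbf b^{(j+2)})\in\set{\a,\overline{\a}}^2$. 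Both conditions from (i) are used: $\overline{a_1}<a_1$ handles the full-block comparison, and the paired inequality $\overline{a_1\ldots a_{m-i}}\preceq a_{i+1}\ldots a_m\prec a_1\ldots a_{m-i}$ handles shifts by $i\in\{1,\dots,m-1\}$. Since reflection reverses the lexicographic order, the first of these is equivalent to $\overline{a_{i+1}\ldots a_m}\preceq a_1\ldots a_{m-i}$, which dispatches the cases where the leading block is $\overline{\a}$. Counting $\#B_{Nm}(\Sigma_{\a})=2^N$, one gets $h(\vs_{p_L})\ge \log 2/m$, and then Lemma~\ref{l29} (combined with the trivial equality $h(\vs_q)=h(\us_q)$ coming from \eqref{eq:26}, since the maps $g_i$ are bi-Lipschitz affine) upgrades this to $h(\us_{p_L})\ge \log 2/m$.

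For the equality characterization, I would split on the period $m$. When $m=1$, $\al(p_L)=a_1^\f$, and \eqref{eq:25} simplifies to $\vs_{p_L}=\set{\overline{a_1},\overline{a_1}+1,\ldots,a_1}^{\N}$, so $h(\vs_{p_L})=\log(2a_1-M+1)$. Since $\overline{a_1}<a_1$, the lower bound $\log 2$ is attained precisely when $2a_1-M+1=2$, i.e.\ $a_1=(M+1)/2$. This forces $M=2k+1\ge 3$ with $a_1=k+1$ (the value $M=1$ is ruled out because it gives $p_L=M+1$, which is not in the interior of any plateau); solving $(k+1)/(p_L-1)=1$ then yields $p_L=k+2$, and the formula in (i) gives $\al(p_R)=(k+2)k^\f$, matching $p_R=q_\star(M)$ from \eqref{e17}. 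When $m\ge 2$, I would rule out equality by producing an admissible word in $B_{m+1}(\vs_{p_L})\setminus B_{m+1}(\Sigma_{\a})$: since the inequality $a_{i+1}\ldots a_m\prec a_1\ldots a_{m-i}$ is strict, there is lexicographic room to append, say, a single digit $b\notin\set{a_{m+1},\overline{a_{m+1}}}$ (with $a_{m+1}$ understood cyclically) to a block $\a$ while still respecting both constraints in \eqref{eq:25}. Shift-invariance then promotes this extra word to a factor of at least $(1+\ep)^N$ on top of the $2^N$ pure block concatenations of length $Nm$, giving $h(\vs_{p_L})>\log 2/m$. The main obstacle here is this last step: verifying that the perturbation can actually be carried out for \emph{every} plateau with $m\ge 2$, rather than just typical ones, requires a careful case analysis of the admissibility conditions from (i) and is cleanest when organised via the transition matrix of a subshift of finite type approximating $\vs_{p_L}$.
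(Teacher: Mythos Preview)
Your handling of (i) and the $m=1$ case of (ii) matches the paper, which likewise cites \cite{AlcarazBarrera-Baker-Kong-2016} for both parts and then computes $h(\us_{p_L})=\log(2a_1-M+1)$ when $\al(p_L)=a_1^\f$.

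The genuine gap is your lower-bound argument: the inclusion $\Sigma_{\a}\subseteq\vs_{p_L}$ is \emph{false} in general. The left inequality in (i), $\overline{a_1\ldots a_{m-i}}\lle a_{i+1}\ldots a_m$, is not strict, and whenever equality holds for some $i$ the transition $\a\to\overline{\a}$ can violate the lower constraint in \eqref{eq:25}. Concretely, take $M=1$ and $\a=110$ (the entropy plateau with $\al(p_L)=(110)^\f$): here $a_3=0=\overline{a_1}$, and
\[
\si^2(\a\,\overline{\a}^\f)=0001001\ldots\prec(001)^\f=\overline{\al(p_L)},
\]
so $\a\,\overline{\a}^\f\notin\vs_{p_L}$. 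By reflection $\overline{\a}\,\a^\f\notin\vs_{p_L}$ either; in fact for this plateau the only concatenations from $\{\a,\overline{\a}\}$ that survive in $\vs_{p_L}$ are $\a^\f$ and $\overline{\a}^\f$, so $\Sigma_\a\cap\vs_{p_L}$ has zero entropy and your argument collapses. The paper does not give a self-contained proof here but defers to \cite[Lemmas~5.1 and~5.5]{AlcarazBarrera-Baker-Kong-2016}; a route consistent with the present paper is to pass to the right endpoint: the sofic shift $X_{\mathcal G}$ of Figure~\ref{fig:1} (built from the four blocks $\a,\a^+,\overline{\a},\overline{\a^+}$ with the restricted transitions $\a\to\{\a,\a^+\}$, $\a^+\to\{\overline{\a},\overline{\a^+}\}$ and their reflections) has a full-entropy follower set inside $\vs_{p_R}$ by the argument behind Proposition~\ref{p37} and \eqref{eq:39}, whence $h(\us_{p_L})=h(\us_{p_R})\ge h(X_{\mathcal G})=\log 2/m$ since $[p_L,p_R]$ is a plateau. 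Your sketch for the strict inequality when $m\ge 2$ inherits the same weakness: verifying that an appended digit respects \eqref{eq:25} across the block boundary cannot be done using only the non-strict side of the inequalities in (i).
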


\begin{proof}
Part (i) was established in \cite[Theorem 2 and Lemma 4.1]{AlcarazBarrera-Baker-Kong-2016}. Part (ii) was implicitly given in the proofs of \cite[Lemmas 5.1 and 5.5]{AlcarazBarrera-Baker-Kong-2016}. It is shown there that $h(\us_{p_L})>\log 2/m$ when $m\geq 2$. If $m=1$, then $\al(p_L)=a_1^\f$ for some $a_1\geq (M+1)/2$, and
\[
h(\us_{p_L})=\log(2a_1-M+1).
\]
(See \cite[Example 5.13]{AlcarazBarrera-Baker-Kong-2016}.) It follows that $h(\us_{p_L})=\log 2/m$ if and only if $m=1$, $M=2k+1\geq 3$ and $a_1=k+1$, in which case
\[
\al(p_L)=(k+1)^\f\qquad\textrm{and}\qquad \al(p_R)=(k+2)k^\f,
\]
or equivalently, 
\[[p_L, p_R]=\left[k+2, \frac{k+3+\sqrt{k^2+6k+1}}{2}\right]=[k+2, q_\star(M)]\]
for $M=2k+1\geq 3$.
\end{proof}

{
\begin{remark}
\label{rem:32}
We point out that  the condition in Lemma \ref{l31} (i) is not a sufficient condition for $[p_L, p_R]\subset(q_{KL}, M+1]$ being an entropy plateau. For a complete characterization of entropy plateaus we refer to  \cite[Theorem 2]{AlcarazBarrera-Baker-Kong-2016}. However, if $[p_L,p_R]$ is an interval satisfying the conditions of Lemma \ref{l31}, then $[p_L,p_R]$ is either an entropy plateau or else it is contained in some entropy plateau {(see Example \ref{ex1} below)}. 
We refer to \cite{AlcarazBarrera-Baker-Kong-2016} for more details.
\end{remark}
}

{\begin{example}\label{ex1}
Take $M=1$ and let $a_1\ldots a_m=1^{m-1}0$ with $m\ge 3$. Then the word $a_1\ldots a_m$ satisfies the inequalities in Lemma \ref{l31} (i), and the interval $[p_L, p_R]$ is indeed an entropy plateau, where $\al(p_L)=(1^{m-1}0)^\f$ and $\al(p_R)=1^m(0^{m-1}1)^\f$. 

On the other hand, take the word $b_1\ldots b_{2m}=1^m0^m$. One can also check that $b_1\ldots b_{2m}$ satisfies the inequalities in Lemma \ref{l31} (i). However, the corresponding interval $[q_L, q_R]$ is a proper subset of $[p_L, p_R]$ and hence not an entropy plateau, where $\al(q_L)=(b_1\ldots b_{2m})^\f$ and $\al(q_R)=b_1\ldots b_{2m}^+(\overline{b_1\ldots b_{2m}})^\f$.
\end{example}
}

\begin{definition}\label{def:33}
If $[p_L,p_R]$ is an entropy plateau with $\alpha(p_L)=(a_1\dots a_m)^\infty$ {and $\al(p_R)=a_1\ldots a_m^+(\overline{a_1\ldots a_m})^\f$}, we shall call $[p_L,p_R]$ an {\em entropy plateau of period $m$}.
\end{definition} 

Recall that $\ub$ is the {set of univoque bases $q\in(1,M+1]$ such that $1$ has a unique $q$-expansion}. The following characterization of its topological closure $\overline{\ub}$ was established in  \cite{Komornik_Loreti_2007} (see also \cite{Vries-Komornik-Loreti-2016}).

\begin{lemma}
\label{l34}
 $q\in\overline{\ub}$ if and only if 
 \[\overline{\al(q)}\prec\si^n(\al(q))\lle \al(q)\qquad \textrm{for all}\quad  n\ge 1.\]
\end{lemma}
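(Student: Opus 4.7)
I plan to prove both implications by combining the lexicographic characterizations of Lemmas \ref{l21} and \ref{l23} with explicit approximation arguments. Write $\al(q) = a_1 a_2 \ldots$ throughout. Lemma \ref{l21} already gives $\sigma^n(\al(q)) \lle \al(q)$ for every $q$ and every $n \ge 1$, so all the content of the claim lies in the strict left-hand inequality $\overline{\al(q)} \prec \sigma^n(\al(q))$.

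For the necessity direction, let $q \in \overline{\ub}$ and choose $q_k \in \ub$ with $q_k \to q$. Since $1 \in \u_{q_k}$, the sequence $\al(q_k)$ is the unique $q_k$-expansion of $1$, so $\al(q_k) \in \us_{q_k}$ and Lemma \ref{l23} applied to $\al(q_k)$ yields $\sigma^n(\al(q_k)) \succ \overline{\al(q_k)}$ for every $n \ge 1$ with $\al_n(q_k) > 0$; the cases with $\al_n(q_k) = 0$ are handled by a short auxiliary argument (an initial block of zeros in $\sigma^n(\al(q_k))$ would force, via Lemma \ref{l21} at an earlier position, $\al(q_k)$ to begin with $0$, which is impossible). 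Passing to a subsequence along which $\al(q_k)$ converges coordinatewise to $\al(q)$ (using left-continuity from Lemma \ref{l21} when $q_k \nearrow q$, and an elementary comparison when $q_k \searrow q$), one obtains $\sigma^n(\al(q)) \lge \overline{\al(q)}$ for every $n \ge 1$. To promote $\lge$ to $\succ$, I would argue by contradiction: if $\sigma^n(\al(q)) = \overline{\al(q)}$ for some minimal $n$, then $\al(q)$ must be the periodic sequence of period $2n$ with reflective structure $(a_1 \ldots a_n \, \overline{a_1 \ldots a_n})^\infty$, and a combinatorial analysis of nearby quasi-greedy expansions (using Lemma \ref{l21} to describe the admissible $\al(q')$ for $q'$ close to $q$) shows that such a $q$ is isolated from $\ub$ on both sides, contradicting $q \in \overline{\ub}$.

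For the sufficiency direction, suppose the stated condition holds. The case $q = M+1$ is immediate ($M+1 \in \ub \subset \overline{\ub}$), so assume $q \in (1, M+1)$. Then $\al(q)$ does not end in $M^\infty$, so there are infinitely many indices $N$ with $a_N < M$. For each such $N$ I would consider the candidate
\[
\beta^{(N)} := a_1 \ldots a_N^+ \, (\overline{a_1 \ldots a_N})^\infty,
\]
modelled on the de Vries--Komornik structure of Lemma \ref{l31}. The hypothesis $\overline{\al(q)} \prec \sigma^n(\al(q)) \lle \al(q)$ for all $n \ge 1$ translates, position by position, into both the admissibility of $\beta^{(N)}$ in the sense of Lemma \ref{l21} and the strict Parry inequalities of Lemma \ref{l23} at $\beta^{(N)}$; consequently $\beta^{(N)} = \al(q_N)$ for some $q_N \in \ub$. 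Since $\beta^{(N)}$ agrees with $\al(q)$ in the first $N-1$ coordinates, the continuous monotone inverse $\al^{-1}$ from Remark \ref{rem:22} forces $q_N \to q$, proving $q \in \overline{\ub}$.

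The main obstacle is upgrading $\lge$ to $\succ$ in the necessity direction. Passage of strict lexicographic inequalities through coordinatewise limits is not automatic, and ruling out the equality case requires the rigidity argument outlined above, in particular a precise control of which periodic reflective sequences $(a_1 \ldots a_n \, \overline{a_1 \ldots a_n})^\infty$ can arise as $\al(q)$ for $q$ in the closure of $\ub$. The sufficiency direction is more mechanical, the only subtlety being to verify that the approximants $\beta^{(N)}$ meet \emph{both} the admissibility and the strict Parry criteria, for which the strict left inequality in the hypothesis is exactly what is needed.
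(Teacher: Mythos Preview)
The paper does not give its own proof of this lemma; it is cited from \cite{Komornik_Loreti_2007} (see also \cite{Vries-Komornik-Loreti-2016}). So there is nothing to compare against, and your proposal must stand on its own.

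Your sufficiency argument contains a genuine gap. The candidates $\beta^{(N)}=a_1\ldots a_N^+(\overline{a_1\ldots a_N})^\f$ need not satisfy the admissibility condition of Lemma~\ref{l21}, so there may be no base $q_N$ with $\al(q_N)=\beta^{(N)}$ at all. Take $M=1$ and $q=\la_*$, the base with $\al(q)=(110)^\f$; this $q$ lies in $\overline{\ub}$ (it is the left endpoint $p_L$ of the first entropy plateau, and the paper records that $p_L\in\overline{\ub}\setminus\ub$). The only indices with $a_N<M$ are the multiples of $3$. For $N=6$ one gets $\beta^{(6)}=110111(001)^\f$, and then $\si^3(\beta^{(6)})=111(001)^\f\succ\beta^{(6)}$, so $\beta^{(6)}$ is not in the range of $\al$. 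The same obstruction occurs for every $N=3k$ with $k\ge 2$, so your scheme produces only the single approximant $\beta^{(3)}=\al(\la^*)$ and you cannot conclude $q_N\ra q$. There is a structural reason for the failure: every $\beta^{(N)}$ satisfies $\beta^{(N)}\succ\al(q)$, so any resulting $q_N$ lies \emph{above} $q$; but a left endpoint such as $\la_*$ is not a right-accumulation point of $\ub$ (immediately to its right lies a gap of $\overline{\ub}$), hence it cannot be approached from above by points of $\ub$. A correct sufficiency proof in the periodic case must approximate from below, which requires a different construction.

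In the necessity direction your outline is closer to workable, but the passage to the limit when $q_k\searrow q$ is not covered by Lemma~\ref{l21} (which gives only left-continuity), and $\al$ genuinely fails to be right-continuous at points of $\overline{\ub}\setminus\ub$ such as $\la_*$. This can be repaired by noting that such points are never right-accumulation points of $\ub$, forcing the approximating sequence to come from the left---but that observation is missing from your sketch. Your reduction of the equality case $\si^n(\al(q))=\overline{\al(q)}$ to the reflective periodic form $(a_1\ldots a_n\,\overline{a_1\ldots a_n})^\f$ is correct, though the claim that such a $q$ is isolated from $\ub$ is itself a nontrivial lemma that you have not established.
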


Lemma \ref{l21} states that the map {$\alpha: q\mapsto \al(q)$} is left-continuous on $(1,M+1]$. The following lemma strengthens this result when $\alpha$ is restricted to $\overline{\mathscr{U}}$.

\begin{lemma}
\label{l35}
{Let  $I=[p, q]\subset(1,M+1)$. Then the map $\al$ is Lipschitz on $\overline{\mathscr{U}}\cap I$ with respect to the metric $\rho_q$.}
\end{lemma}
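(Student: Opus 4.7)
My plan is to obtain a quantitative lower bound on $q_2-q_1$ in terms of the first position of disagreement between $\al(q_1)$ and $\al(q_2)$. Let $q_1,q_2\in\overline{\ub}\cap I$ with $q_1<q_2$, set $n:=\min\{i:\al_i(q_1)\neq\al_i(q_2)\}$, and write $\beta_i:=\al_i(q_1)$, $\ga_i:=\al_i(q_2)$, so $\beta_i=\ga_i$ for $i<n$ and $\ga_n\ge\beta_n+1$. The goal is to produce $C=C(p,q,M)$ with $q^{-n}=\rho_q(\al(q_1),\al(q_2))\le C(q_2-q_1)$.

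From $\pi_{q_1}(\al(q_1))=\pi_{q_2}(\al(q_2))=1$ and the mean value theorem applied to $r\mapsto\pi_r(\al(q_2))$, I obtain the identity
\[
\sum_{j\ge n}(\ga_j-\beta_j)q_1^{-j}=\pi_{q_1}(\al(q_2))-1=\int_{q_1}^{q_2}\sum_{j\ge 1}j\ga_j r^{-j-1}\,dr.
\]
Since the integrand is at most $M/(p-1)^2$ on $[p,q]$, the right-hand side is $\le C_1(q_2-q_1)$ with $C_1:=M/(p-1)^2$. For the matching lower bound I factor $q_1^{-n}$ from the middle expression and write it as $q_1^{-n}\Delta$ with
\[
\Delta:=(\ga_n-\beta_n)+\pi_{q_1}(\sigma^n\al(q_2))-\pi_{q_1}(\sigma^n\al(q_1)).
\]
I would then show $\Delta\ge c_0:=M/(q-1)-1>0$ (strictly, since $q<M+1$) via three ingredients: (a) $\ga_n-\beta_n\ge 1$ by construction; (b) $\pi_{q_1}(\sigma^n\al(q_1))\le 1$, because $\sigma^n\al(q_1)$ is the quasi-greedy $q_1$-expansion of its own value (otherwise one could build a lex-larger quasi-greedy expansion of $1$, contradicting Lemma \ref{l21}), and by the monotonicity of quasi-greedy expansions in the represented value $\sigma^n\al(q_1)\lle\al(q_1)$ forces $\pi_{q_1}(\sigma^n\al(q_1))\le 1$; (c) $\pi_{q_1}(\sigma^n\al(q_2))\ge\pi_{q_2}(\sigma^n\al(q_2))\ge M/(q_2-1)-1$, where the first step uses that $r\mapsto\pi_r((x_i))$ is non-increasing for nonnegative sequences. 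Combining and invoking $q_1\le q$ yields
\[
c_0\,q^{-n}\le c_0\,q_1^{-n}\le\sum_{j\ge n}(\ga_j-\beta_j)q_1^{-j}\le C_1(q_2-q_1),
\]
which is the desired Lipschitz estimate with constant $C_1/c_0$.

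The main obstacle is the second inequality in (c), namely $\pi_{q_2}(\sigma^n\al(q_2))\ge\pi_{q_2}(\overline{\al(q_2)})=M/(q_2-1)-1$ for $q_2\in\overline{\ub}$. When $q_2\in\ub$, Lemmas \ref{l23} and \ref{l34} place both $\sigma^n\al(q_2)$ and $\overline{\al(q_2)}$ in $\us_{q_2}$; the bi-Lipschitz bijection $\pi_{q_2}\colon\us_{q_2}\to\u_{q_2}$ from Lemma \ref{l27} is forced to be order-preserving (it sends $0^\infty\mapsto 0$ and $M^\infty\mapsto M/(q_2-1)$), so the strict lex inequality $\sigma^n\al(q_2)\succ\overline{\al(q_2)}$ guaranteed by Lemma \ref{l34} yields the strict projection inequality. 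For the countable set $\overline{\ub}\setminus\ub$ the inequality then passes to the limit along any sequence in $\ub$ converging to $q_2$, using density of $\ub$ in $\overline{\ub}$, continuity of $\al$ on $\overline{\ub}$, and joint continuity of $(r,(x_i))\mapsto\pi_r((x_i))$.
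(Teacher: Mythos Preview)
Your approach is correct in outline and reaches the same conclusion, but it differs from the paper's proof and two of your justifications are not solid as written.

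\textbf{Comparison with the paper.} The paper also proves the quantitative bound $\rho_q(\al(p_1),\al(p_2))\le C|p_2-p_1|$, but extracts the lower bound on the tail differently. Since $q<M+1$, there is an $N=N(q)$ with $\al(p_2)\lle\al(q)\prec M^{N-1}0^\f$; then $p_2\in\overline{\ub}$ gives $\si^n\al(p_2)\succ\overline{\al(p_2)}\lge 0^{N-1}M^\f$, so $\si^n\al(p_2)$ has a nonzero digit among its first $N-1$ entries. This yields directly $\sum_{i>n}\al_i(p_2)p_2^{-i}>p_2^{-(n+N)}$, and the paper then bounds $1-\sum_{i\le n}\al_i(p_2)p_2^{-i}$ from above by the same telescoping sum you obtain via the integral. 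The paper's route avoids any discussion of order-preservation of $\pi_q$ or density arguments; your route gives the explicit constant $c_0=M/(q-1)-1$, while the paper's constant involves the (inexplicit) $N$.

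\textbf{Two gaps in your argument for (c).} First, the claim that the bi-Lipschitz bijection $\pi_{q_2}\colon\us_{q_2}\to\u_{q_2}$ ``is forced to be order-preserving'' because it fixes the two extreme sequences is not a valid inference: homeomorphisms of Cantor-type subsets of linearly ordered spaces need not be monotone. The correct reason is that elements of $\us_{q_2}$ are greedy expansions of their $\pi_{q_2}$-values, and the greedy map is an order isomorphism. Second, your limiting step invokes ``continuity of $\al$ on $\overline{\ub}$'', which is precisely what the lemma asserts; you should instead use the left-continuity of $\al$ (Lemma~\ref{l21}) together with the fact that any $q_2\in\overline{\ub}\setminus\ub$ is a left accumulation point of $\ub$, which requires a short argument.

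In fact the whole detour through $\ub$ and limits in (c) is unnecessary: for any $q_2\in\overline{\ub}$ and any $n\ge 1$, Lemma~\ref{l34} gives $\si^{n+m}\al(q_2)\succ\overline{\al(q_2)}$ for all $m\ge 0$; reflecting, $\si^m\big(\overline{\si^n\al(q_2)}\big)\prec\al(q_2)$ for all $m\ge 0$. By the same quasi-greedy property you used in (b), this yields $\pi_{q_2}\big(\overline{\si^n\al(q_2)}\big)\le 1$, i.e.\ $\pi_{q_2}(\si^n\al(q_2))\ge M/(q_2-1)-1$ directly.
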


\begin{proof}
Fix $1<p<q<M+1$.
We will show something slightly stronger, namely that there is a constant $C=C(p,q)$ such that for any $p\le p_1<p_2\leq q$ with $p_2\in \overline{\mathscr{U}}$, 
\[
\rho_q(\alpha(p_1),\alpha(p_2)) \leq C|p_2-p_1|.
\]

Let $p\le p_1<p_2\le q$ and $p_2\in \overline{\mathscr{U}}$. Then by Lemma \ref{l21} we have $\al(p_1)\prec \al(p_2)$. So there exists $n\ge 1$ such that $\al_1(p_1)\ldots \al_{n-1}(p_1)=\al_1(p_2)\ldots \al_{n-1}(p_2)$ and $\al_n(p_1)<\al_n(p_2)$. Since $q<M+1$,  we have $\al(q)\prec M^\infty$. Hence there exists a large integer $N\ge 1$, depending only on $q$, such that $\al(p_2)\lle \al(q)\lle M^{N-1}0^\f$. Since $p_2\in\overline{\ub}$, it follows by Lemma \ref{l34} that
 \[
 \al_{n+1}(p_2)\al_{n+2}(p_2)\ldots{\succ} \overline{\al(p_2)}\lge 0^{N-1}M^\f.
 \]
 This implies
 \[
 1=\sum_{i=1}^\f\frac{\al_i(p_2)}{p_2^i}>\sum_{i=1}^n\frac{\al_i(p_2)}{p_2^i}+\frac{1}{p_2^{n+N}}.
 \]
Therefore,
 \begin{align*} 
 \frac{1}{p_2^{n+N}}\le 1-\sum_{i=1}^n\frac{\al_i(p_2)}{p_2^i}&=\sum_{i=1}^\f\frac{\al_i(p_1)}{p_1^i}-\sum_{i=1}^n\frac{\al_i(p_2)}{p_2^i}\\
 &\le \sum_{i=1}^n\left(\frac{\al_i(p_2)}{p_1^i}-\frac{\al_i(p_2)}{p_2^i}\right)\\
 &\le \sum_{i=1}^\f\left(\frac{M}{p_1^i}-\frac{M}{p_2^i}\right)=\frac{M|p_2-p_1|}{(p_1-1)(p_2-1)}\\
 &\leq\frac{M|p_2-p_1|}{(p-1)^2}.
 \end{align*}
 Here the second inequality follows by using $\al_1(p_1)\ldots \al_{n-1}(p_1)=\al_1(p_2)\ldots \al_{n-1}(p_2)$, $\al_n(p_1)<\al_n(p_2)$ and the property of quasi-greedy expansion that $\sum_{i=1}^\f\al_{n+i}(p_1)/ p_1^i\le 1$. Therefore, we obtain
\begin{equation*}  
\rho_q(\alpha(p_1),\alpha(p_2))=q^{-n}\leq p_2^{-n}\leq \frac{ Mq^N}{(p-1)^2}|p_2-p_1|.
\end{equation*}
The proof is complete.
\end{proof}
 
The following dimension estimates will be very useful throughout the paper: 

\begin{lemma}
\label{l36}
For any interval $I=[p,q]\subseteq (1,M+1)$, 
\[
\dim_H\pi_{q}(\us_I) \le \dim_H(\overline{\mathscr{U}}\cap I) \leq \frac{h(\mathbf{U}_I)}{\log p},
\]
where $\mathbf{U}_I:=\{\alpha(\ell): \ell\in\overline{\mathscr{U}}\cap I\}$.
\end{lemma}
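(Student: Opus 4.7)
The plan is to prove the two inequalities separately, exploiting the fact that the quasi-greedy map $\alpha$ provides a well-behaved bijection between the base set $\overline{\ub}\cap I$ and its symbolic image $\us_I=\mathbf{U}_I=\{\alpha(\ell):\ell\in\overline{\ub}\cap I\}$.

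For the first inequality I would compose the two Lipschitz maps already at our disposal. By Lemma~\ref{l35}, $\alpha:(\overline{\ub}\cap I,|\cdot|)\to(\mathbf{U}_I,\rho_q)$ is Lipschitz, and by Lemma~\ref{l27}, $\pi_q:(\Omega,\rho_q)\to(I_q,|\cdot|)$ is Lipschitz (this is where $q<M+1$ is used). The composition $\pi_q\circ\alpha$ is therefore a Lipschitz map from $\overline{\ub}\cap I$ onto $\pi_q(\us_I)$, so Lemma~\ref{l26} with exponent $\xi=1$ yields $\dim_H\pi_q(\us_I)\le\dim_H(\overline{\ub}\cap I)$.

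For the second inequality the plan is to produce an explicit cover of $\overline{\ub}\cap I$ using short intervals indexed by length-$n$ prefixes of $\mathbf{U}_I$. For each $w=a_1\dots a_n\in B_n(\mathbf{U}_I)$, set $J_w:=\{\ell\in\overline{\ub}\cap I:\alpha_i(\ell)=a_i\text{ for }1\le i\le n\}$. The key claim is that $J_w$ lies in an interval of length at most $Cp^{-n}$, for a constant $C=C(p,q,M)$. To prove this I would introduce $\phi_w(x):=\sum_{i=1}^{n}a_i x^{-i}$; the identity $\pi_\ell(\alpha(\ell))=1$ together with the tail bound $\sum_{i>n}\alpha_i(\ell)\ell^{-i}\le M/(\ell^n(\ell-1))\le M/(p^n(p-1))$ forces $\phi_w(\ell)\in[1-M/(p^n(p-1)),\,1]$ for every $\ell\in J_w$. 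Since $\phi_w$ is strictly decreasing with $|\phi_w'(x)|\ge a_1 x^{-2}\ge 1/q^2$ (using $a_1=\alpha_1(\ell)\ge 1$ for $\ell>1$), the mean value theorem delivers the length bound.

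Granting this, $\overline{\ub}\cap I$ is covered by $\#B_n(\mathbf{U}_I)$ intervals of length $\le Cp^{-n}$ for every $n$. Taking all logarithms to base $M+1$ as in the paper, this yields
\[
\underline{\dim}_B(\overline{\ub}\cap I)\le\liminf_{n\to\infty}\frac{\log \#B_n(\mathbf{U}_I)}{n\log p-\log C}=\frac{h(\mathbf{U}_I)}{\log p},
\]
and $\dim_H\le\underline{\dim}_B$ finishes the argument. The main obstacle is the length estimate for $J_w$: Lemma~\ref{l35} controls $\alpha$ only in the direction ``Euclidean $\to\rho_q$'', so the reverse-direction bound that lets us convert symbolic entropy back into Hausdorff dimension of bases must be extracted directly from the defining equation $\pi_\ell(\alpha(\ell))=1$ via the elementary calculus argument above.
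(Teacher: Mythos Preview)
Your argument is correct. The first inequality is proved exactly as in the paper: compose the Lipschitz maps from Lemmas~\ref{l35} and~\ref{l27} and apply Lemma~\ref{l26}.

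For the second inequality the paper takes a slightly different route. Rather than building explicit covers, it shows directly that $\alpha^{-1}$ is H\"older continuous with exponent $\log p$ on $\{\alpha(\ell):p\le\ell\le q\}$: if $\alpha(p_1)$ and $\alpha(p_2)$ first differ at position $n$, one manipulates the identity $\ell=\sum_{i\ge 1}\alpha_i(\ell)\ell^{-(i-1)}$ together with the quasi-greedy tail bound to get $p_2-p_1\le p_2^{2-n}\le (M+1)^2 p^{-n}$, whence $\rho(\alpha(p_1),\alpha(p_2))\ge \big((p_2-p_1)/(M+1)^2\big)^{1/\log p}$. Lemma~\ref{l26} then gives $\dim_H(\overline{\ub}\cap I)\le \dim_H\mathbf{U}_I/\log p\le h(\mathbf{U}_I)/\log p$. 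Your covering/mean-value-theorem computation and the paper's algebraic bound are two packagings of the same core estimate ``matching $n$-prefixes forces $|\ell_1-\ell_2|\le Cp^{-n}$''; you then pass through lower box dimension, while the paper passes through H\"older continuity and the general inequality $\dim_H\le h$. The paper's formulation has the mild advantage that the H\"older statement for $\alpha^{-1}$ is reusable, whereas your box-dimension route is perhaps more elementary and avoids invoking $\dim_H\mathbf{U}_I\le h(\mathbf{U}_I)$ as a separate step.
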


\begin{proof}
Fix an interval $I=[p,q]\subseteq (1,M+1)$. We may view the map $\pi_q\circ\al: \overline{\ub}\cap I\to \R$ as the composition of the maps $\al:\overline{\ub}\cap I\to (\us_I,\varphi_q)$ and $\pi_q: (\us_I,\varphi_q)\to \R$. The first map is Lipschitz by Lemma \ref{l35}, and the second is Lipschitz by Lemma \ref{l27}, since $\us_I\subset\us_q$. Therefore, the composition $\pi_q\circ \al$ is Lipschitz. {Using Lemma \ref{l26}, this implies the first inequality.}

The second inequality is proved as follows. Let $p\leq p_1<p_2\leq q$. Then $\alpha(p_1)\prec \alpha(p_2)$ by Lemma \ref{l21}, so there is a number $n\in\N$ such that $\al_1(p_1)\ldots \al_{n-1}(p_1)=\al_1(p_2)\ldots \al_{n-1}(p_2)$ and $\al_n(p_1)<\al_n(p_2)$. As in the proof of Lemma 4.3 in \cite{Kalle-Kong-Li-Lv-2016}, we then have
\begin{align*}
p_2-p_1 &= \sum_{i=1}^\infty \frac{\al_i(p_2)}{p_2^{i-1}}-\sum_{i=1}^\infty \frac{\al_i(p_1)}{p_1^{i-1}}\\
&\leq \sum_{i=1}^{n-1}\left(\frac{\al_i(p_2)}{p_2^{i-1}}-\frac{\al_i(p_1)}{p_1^{i-1}}\right)+\sum_{i=n}^\infty \frac{\al_i(p_2)}{p_2^{i-1}}\\
&\leq p_2^{2-n} \leq (M+1)^2 p^{-n},
\end{align*}
where the second inequality follows by the property of the quasi-greedy expansion $\alpha(p_2)$ of $1$. We conclude that
\[\rho(\al(p_1),\al(p_2))=(M+1)^{-n}=p^{-n/\log p} \geq \left(\frac{p_2-p_1}{(M+1)^2}\right)^{1/\log p},\]
in other words, the map $\alpha^{-1}$ is H\"older continuous with exponent $\log p$ on the set $\{\al(\ell):p\leq \ell\leq q\}$. It follows using Lemma \ref{l26} that
\begin{align*}
\dim_H(\overline{\mathscr{U}}\cap I) &= \dim_H(\al^{-1}(\mathbf{U}_I)) \leq \frac{ \dim_H \mathbf{U}_I}{\log p}{\le\frac{h(\mathbf{U}_I)}{\log p}},
\end{align*}
completing the proof.
\end{proof}

{Let $[p_L, p_R]\subset(q_{KL}, M+1]$ be an entropy plateau such that $\al(p_L)=(a_1\ldots a_m)^\f$ and $\al(p_R)=a_1\ldots a_m^+(\overline{a_1\ldots a_m})^\f$.} The proofs of the following two propositions use the sofic subshift $(X_{\mathcal G}, \si)$ represented by the labeled graph
$\mathcal G=(G, \mathcal L)$ in Figure \ref{fig:1} (cf.~\cite[Chapter 3]{Lind_Marcus_1995}).

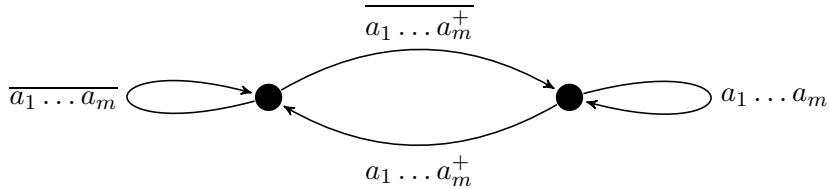
\begin{figure}[h!]
  \centering
 \begin{tikzpicture}[->,>=stealth',shorten >=1pt,auto,node distance=4cm,
                    semithick]

  \tikzstyle{every state}=[minimum size=0pt,fill=black,draw=none,text=black]

  \node[state] (A)                    { };
  \node[state]         (B) [ right of=A] { };

  \path[->,every loop/.style={min distance=0mm, looseness=60}]
   (A) edge [loop left,->]  node {$\overline{a_1\ldots a_m}$} (A)
            edge  [bend left]   node {$\overline{a_1\ldots a_m^+}$} (B)

        (B) edge [loop right] node {$a_1\ldots a_m$} (B)
            edge  [bend left]            node {$a_1\ldots a_m^+$} (A);
\end{tikzpicture}
  \caption{The picture of the labeled graph $\mathcal G=(G, \mathcal L)$.}\label{fig:1}
\end{figure}
 
{We emphasize that $(X_{\mathcal G}, \si)$ is {in fact} a subshift of finite type over the states  
\[a_1\ldots a_m,\qquad a_1\ldots a_m^+,\qquad \overline{a_1\ldots a_m}\qquad\textrm{and}\qquad \overline{a_1\ldots a_m^+}\]
with adjacency matrix
\[
A_{\mathcal G}:=\left(
\begin{array}{cccc}
1&1&0&0\\
0&0&1&1\\
0&0&1&1\\
1&1&0&0
\end{array}\right).
\]}
Then it is easy to see {(cf.~\cite[Theorem 4.3.3]{Lind_Marcus_1995})} that
\begin{equation}\label{eq:31}
h(X_{\mathcal G}){=\frac{{\log} \la(A_{\mathcal G})}{m}}=\frac{\log 2}{m},
\end{equation}
{where $\la(A_{\mathcal G})$ denotes the spectral radius of $A_{\mathcal G}$.}

\begin{proposition}
\label{p37}
Let $[p_L,p_R]\subseteq (q_{KL},M+1)$ be an entropy plateau of period $m$. Then for any $p\in[p_L,p_R)$,
\[
\dim_H(\mathscr{U}\cap [p,p_R])\geq \frac{\log 2}{m\log p_R}.
\]
\end{proposition}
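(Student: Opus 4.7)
My plan is to construct a set $Y$ of sequences inside the sofic subshift $X_{\mathcal{G}}$ that will embed injectively into $\overline{\mathscr{U}}\cap[p,p_R]$ via the inverse quasi-greedy map $\alpha^{-1}$, and then transfer the Hausdorff dimension estimate on $Y$ to $\overline{\mathscr{U}}\cap[p,p_R]$ using the Lipschitz bound on $\alpha$ from Lemma \ref{l35}. Since $\overline{\mathscr{U}}\setminus\mathscr{U}$ is at most countable (it consists of left endpoints of plateaus), the resulting lower bound for $\overline{\mathscr{U}}\cap[p,p_R]$ transfers to $\mathscr{U}\cap[p,p_R]$.

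For the cylinder construction, since $p<p_R$ Lemma \ref{l21} yields a smallest index $j\geq 1$ with $\alpha_j(p)<\alpha_j(p_R)$. I choose $K\geq 0$ with $(K+1)m\geq j$ and set
\[
w:=a_1\cdots a_m^+(\overline{a_1\cdots a_m})^K,\qquad Y:=\bigl\{(b_i)\in X_{\mathcal{G}}:b_1\cdots b_{(K+1)m}=w\bigr\}.
\]
The word $w$ traces a legal path in $\mathcal{G}$ ending at the left-hand state, and since every state has two outgoing edges, the admissible continuations after $w$ are coded bijectively by $\{0,1\}^\N$ (at each step choose one of the two outgoing blocks). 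Thus $Y$ is essentially a full $2$-shift rescaled into blocks of length $m$, and its Hausdorff dimension under $\rho$ equals $\dim_H Y=\log 2/m$.

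Next I would show that $Y\subseteq \alpha(\overline{\mathscr{U}}\cap[p,p_R])$. For $(b_i)\in Y$, the fact that $w$ is a common prefix of $(b_i)$ and $\alpha(p_R)$ of length $\geq j$ forces $(b_i)\succ\alpha(p)$. The inequality $(b_i)\preccurlyeq\alpha(p_R)$ holds because either $(b_i)=\alpha(p_R)=w(\overline{a_1\cdots a_m})^\infty$, or the first block at which $(b_i)$ deviates from $\alpha(p_R)$ must be $\overline{a_1\cdots a_m^+}$ (the only alternative at the left-hand state), and $\overline{a_1\cdots a_m^+}\prec\overline{a_1\cdots a_m}$ yields the inequality. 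The main obstacle is to verify that $(b_i)$ is the quasi-greedy expansion of $1$ in some base $q\in\overline{\mathscr{U}}$, that is, by Lemmas \ref{l21} and \ref{l34},
\[
\overline{(b_i)}\prec\sigma^n((b_i))\preccurlyeq (b_i)\qquad\text{for every }n\geq 1.
\]
I would prove this by a case analysis on $n\bmod m$: the case where $n$ is a multiple of $m$ reduces to comparing two admissible blocks produced by the graph, while the sub-block shifts are handled by the inequalities $\overline{a_1\cdots a_{m-i}}\preccurlyeq a_{i+1}\cdots a_m\prec a_1\cdots a_{m-i}$ for $1\leq i<m$ provided by Lemma \ref{l31}(i). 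Once verified, Lemmas \ref{l21} and \ref{l34} give a unique $q\in\overline{\mathscr{U}}$ with $\alpha(q)=(b_i)$, and the sandwich $\alpha(p)\prec(b_i)\preccurlyeq\alpha(p_R)$ places $q\in[p,p_R]$.

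Finally, Lemma \ref{l35} asserts that $\alpha$ is Lipschitz from $(\overline{\mathscr{U}}\cap[p,p_R],|\cdot|)$ into $(\Omega,\rho_{p_R})$, so Lemma \ref{l26} gives
\[
\dim_H\bigl(\overline{\mathscr{U}}\cap[p,p_R]\bigr)\geq \dim_H^{(p_R)}\alpha\bigl(\overline{\mathscr{U}}\cap[p,p_R]\bigr)\geq \dim_H^{(p_R)}Y.
\]
The change-of-metric identity \eqref{eq:24} then gives $\dim_H^{(p_R)}Y=(\log(M+1)/\log p_R)\dim_H Y=\log 2/(m\log p_R)$, and combining this with $\dim_H(\overline{\mathscr{U}}\cap[p,p_R])=\dim_H(\mathscr{U}\cap[p,p_R])$ yields the claim.
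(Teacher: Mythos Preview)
Your transfer mechanism via Lemma~\ref{l35} is perfectly sound and in fact slightly cleaner than the paper's route through $\pi_{p_R}$ and Lemma~\ref{l36}. The gap is in the set $Y$ itself: the full follower set of $w$ in $X_{\mathcal G}$ is \emph{not} contained in $\alpha(\overline{\mathscr U})$, so your case analysis for $\sigma^n((b_i))\preccurlyeq(b_i)$ cannot succeed as stated.

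Here is a concrete failure. Take any $K\ge 0$ and choose $(b_i)\in Y$ whose block decomposition begins
\[
a_1\cdots a_m^+\;(\overline{a_1\cdots a_m})^K\;\overline{a_1\cdots a_m^+}\;a_1\cdots a_m^+\;(\overline{a_1\cdots a_m})^{K+1}\;\cdots,
\]
which is a legal path in $\mathcal G$. With $n=(K+2)m$ we have $\sigma^n((b_i))$ beginning with $a_1\cdots a_m^+(\overline{a_1\cdots a_m})^{K+1}$, whereas $(b_i)$ begins with $a_1\cdots a_m^+(\overline{a_1\cdots a_m})^{K}\,\overline{a_1\cdots a_m^+}$. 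Since $\overline{a_1\cdots a_m}\succ\overline{a_1\cdots a_m^+}$, this gives $\sigma^n((b_i))\succ(b_i)$, so $(b_i)\notin\alpha(\overline{\mathscr U})$ by Lemma~\ref{l34}. The point is that once you allow a later occurrence of $a_1\cdots a_m^+$ to be followed by a \emph{longer} run of $\overline{a_1\cdots a_m}$ than the initial prefix $w$ permits, the shift of $(b_i)$ overtakes $(b_i)$ itself. (A symmetric obstruction breaks the lower inequality $\sigma^n((b_i))\succ\overline{(b_i)}$.)

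This is exactly why the paper does not use the whole follower set. It takes, for each $N\ge 2$, the subset $\Lambda_N\subset X_{\mathcal G}$ of sequences beginning with $a_1\cdots a_m^+(\overline{a_1\cdots a_m})^{N-1}$ in whose tail neither $a_1\cdots a_m^+(\overline{a_1\cdots a_m})^{N-1}$ nor its reflection occurs. That restriction is precisely what forces $\sigma^n((c_i))\prec(c_i)$ and $\sigma^n((c_i))\succ\overline{(c_i)}$ for all $n\ge 1$, so $\Lambda_N\subset\alpha(\overline{\mathscr U}\cap[p,p_R])$. The price is that $\dim_H\Lambda_N<\log 2/m$ for each fixed $N$; one recovers the bound only in the limit $N\to\infty$. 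Your argument can be repaired in the same way: replace $Y$ by $\Lambda_N$, keep your Lipschitz transfer via Lemma~\ref{l35}, and let $N\to\infty$.
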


(We will show in Section \ref{s4} that this holds in fact with equality.)

\begin{proof}
We will construct a sequence of subsets $\set{\Lambda_N}$ of $\us_{[p, p_R]}$ such that the Hausdorff dimension of $\pi_{p_R}(\Lambda_N)$ tends to $\frac{\log 2}{m\log p_R}$ as $N\ra\f$, where $\us_{[p, p_R]}:=\set{\al(\ell): \ell\in\overline{\ub}\cap[p, p_R]}$. This observation, when combined with Lemma \ref{l36} and the fact that the difference between $\ub$ and $\overline{\ub}$ is countable, will imply our lower bound. 

Let $a_1\dots a_m$ be the word such that $\al(p_L)=(a_1\ldots a_m)^\f$ and $\al(p_R)=a_1\ldots a_m^+(\overline{a_1\ldots a_m})^\f$.
Recall that $X_{\mathcal G}$ is a sofic subshift represented by the labeled graph $\mathcal G$ in Figure \ref{fig:1}.  For an integer $N\ge 2$ let $\Lambda_N$ be the set of sequences $(c_i)\in X_{\mathcal G}$ beginning with
  \[c_1\ldots c_{mN}=a_1\ldots a_m^+(\overline{a_1\ldots a_m})^{N-1}\]
   and the tail sequence $c_{mN+1}c_{mN+2}\ldots$ not containing  the word $a_1\ldots a_m^+(\overline{a_1\ldots a_m})^{N-1}$ or $\overline{a_1\ldots a_m^+}(a_1\ldots a_m)^{N-1}$. Note that since $\al(p){\prec}\al(p_R)$, we can choose $N$ large enough so that $\al(p){\prec}a_1\ldots a_m^+(\overline{a_1\ldots a_m})^{N-1}0^\f$.
  We claim that $\Lambda_N\subset\us_{[p, p_R]}$.

Observe that $a_1\ldots a_m^+(\overline{a_1\ldots a_m})^\f$ is the lexicographically largest sequence in $X_{\mathcal G}$, and $\overline{a_1\ldots a_m^+}(a_1\ldots a_m)^\f$ is the lexicographically smallest sequence in $X_{\mathcal G}$.  Take a sequence $(c_i)\in\Lambda_N$. Then $(c_i)$ has a prefix $a_1\ldots a_m^+(\overline{a_1\ldots a_m})^{N-1}$, and the tail $c_{mN+1}c_{mN+2}\ldots$ satisfies the inequalities
  \[\overline{(c_i)}\lle \overline{a_1\ldots a_m^+}({a_1\ldots a_m})^{N-1}M^\f \prec \si^n((c_i))\prec a_1\ldots a_m^+(\overline{a_1\ldots a_m})^{N-1}0^\f\lle (c_i)\] for all $n\ge mN$.
By Lemma \ref{l34}, to prove $(c_i)\in\us_{[p, p_R]}$ it suffices to prove $\overline{(c_i)}\prec \si^n((c_i))\prec (c_i)$ for all $1\le n<mN$. Note by Lemma \ref{l31}(i) that
 \begin{equation}\label{eq:32}
\overline{a_1\ldots a_{m-i}}\lle a_{i+1}\ldots a_m\prec a_1\ldots a_{m-i}\quad\textrm{for all}\quad 1\le i<m.
\end{equation}
This implies that
\[
a_{i+1}\ldots a_m^+\overline{a_1\ldots a_i}\lle a_1\ldots a_m\prec a_1\ldots a_m^+,
\]
and
\[
a_{i+1}\ldots a_m^+\succ a_{i+1}\ldots a_m\lge \overline{a_1\ldots a_{m-i}}
\]
for all $1\le i<m$. So $\overline{(c_i)}\prec \si^n((c_i))\prec (c_i)$ for all $1\le n<m$. Furthermore, by (\ref{eq:32}) it follows that
\begin{align*}
&\overline{a_1\ldots a_m^+}\prec \overline{a_1\ldots a_m}\lle a_{i+1}\ldots a_m a_1\ldots a_i\prec a_1\ldots a_m^+
\end{align*}
for all $0\le i<m$. Taking the reflection we obtain
\begin{equation}\label{eq:33}
\overline{a_1\ldots a_m^+}\prec \overline{a_{i+1}\ldots a_m a_1\ldots a_i}\prec a_1\ldots a_m^+
\end{equation}
for all $0\le i<m$. Since $c_{m(N-1)+1}\ldots c_{mN}=\overline{a_1\ldots a_m}$, we have  $c_{mN+1}\ldots c_{mN+m-1}=\overline{a_1\ldots a_{m-1}}$ (see Figure \ref{fig:1}). Then by (\ref{eq:33}) it follows that $\overline{(c_i)}\prec \si^n((c_i))\prec (c_i)$ for all $m\le n<mN$.
Therefore, $\overline{(c_i)}\prec \si^n((c_i))\prec (c_i)$ for all $n\ge 1$. So $(c_i)\in\us_{[p, p_R]}$, and hence $\Lambda_N\subset\us_{[p, p_R]}$.

Observe that $\pi_{p_R}(\Lambda_N)$ is the affine image of a graph-directed self-similar set whose Hausdorff dimension is arbitrarily close to the dimension of $\pi_{p_R}(X_{\mathcal G})$ as $N\ra\f$. Then
\[
\lim_{N\ra\f}\dim_H\pi_{p_R}(\Lambda_N)=\dim_H\pi_{p_R}(X_{\mathcal G})=\frac{\log 2}{m\log p_R}.
\]
Therefore, by the first inequality in Lemma \ref{l36} and the claim we conclude that
\begin{align*}
\dim_H(\overline{\ub}\cap[p, p_R])&\ge\dim_H\pi_{p_R}(\us_{[p, p_R]})\\
&\ge \lim_{N\ra\f}\dim_H\pi_{p_R}(\Lambda_N)=\frac{\log 2}{m\log p_R},
\end{align*}
completing the proof.
\end{proof}

Next, recall from (\ref{eq:25}) that $\vs_q$ is the set of sequences $(x_i)\in\Omega$ satisfying the inequalities:
 \[
 \overline{\al(q)}\lle \si^{ n}((x_i))\lle \al(q)\qquad\textrm{for all}\quad { n}\ge 0.
 \]

The next proposition shows that the set-valued map  $q\mapsto \vs_q$ does not vary too much inside an entropy plateau $[p_L, p_R]$, and gives a sharp estimate for the limit in Theorem \ref{main:1}(iv) when $q$ lies inside an entropy plateau.
 \begin{proposition}
\label{p38}
Let $[p_L, p_R]\subset(q_{KL}, M+1]$ be an entropy plateau of period $m$. Then
\begin{enumerate}[{\rm (i)}]
\item For all $p$ and $q$ with $p_L\leq p<q<p_R$,
\begin{equation}\label{eq:34}
\dim_H(\vs_q\backslash \vs_p)<\dim_H(\vs_{p_R}\backslash \vs_p)=\frac{\log 2}{ m}.
\end{equation}
 
\item For all $q\in(p_L,p_R]$,
\begin{equation}
\lim_{p\nearrow q}\dim_H(\overline{\ub}\cap(p,q))\le\frac{\log 2}{m\log q},
\label{eq:35}
\end{equation}
with equality if and only if $q=p_R$.
\end{enumerate}
\end{proposition}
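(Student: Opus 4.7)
My strategy is to control both differences via the sofic shift $X_{\mathcal G}$ of Figure \ref{fig:1}; write $\tilde X_q := X_{\mathcal G}\cap\vs_q$. The central technical claim, which is the main obstacle, is the containment
\[
\vs_q\setminus\vs_p \;\subseteq\; \bigcup_{n=0}^\infty \sigma^{-n}(\tilde X_q), \qquad p_L\le p<q\le p_R.
\]
To prove it, take $(x_i)\in\vs_q\setminus\vs_p$ and let $n_0$ be minimal with $\sigma^{n_0}((x_i))\succ\al(p)\lge(a_1\ldots a_m)^\infty$ (the reflected case is symmetric). Parsing $\sigma^{n_0}((x_i))$ into aligned length-$m$ blocks and combining the inequalities in Lemma \ref{l31}(i) with $\sigma^k((x_i))\lle\al(p_R)$ forces some shift $\sigma^n((x_i))$ to begin with the word $a_1\ldots a_m^+$. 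Iterating this block-by-block analysis and using both $\al(p_R)$ and $\overline{\al(p_R)}$ as pointwise bounds on every shift, each subsequent aligned length-$m$ block of $\sigma^n((x_i))$ is forced to lie in $\{a_1\ldots a_m,\,a_1\ldots a_m^+,\,\overline{a_1\ldots a_m},\,\overline{a_1\ldots a_m^+}\}$, connected according to the transitions of $\mathcal G$. Hence $\sigma^n((x_i))\in X_{\mathcal G}$, and shift-invariance of $\vs_q$ gives $\sigma^n((x_i))\in\tilde X_q$.

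For Part (i), a cylinder decomposition yields $\dim_H\sigma^{-n}(\tilde X_q)=\dim_H\tilde X_q$ for each $n$, and countable stability combined with the containment above gives $\dim_H(\vs_q\setminus\vs_p)\le \dim_H\tilde X_q = h(\tilde X_q)$, where the last equality holds because $\tilde X_q$ is a subshift (compare Lemma \ref{l28}). When $q=p_R$, $X_{\mathcal G}\subseteq\vs_{p_R}$ gives $\tilde X_{p_R}=X_{\mathcal G}$ and thus $h(\tilde X_{p_R})=\log 2/m$ by \eqref{eq:31}. When $q<p_R$, $\al(p_R)\in X_{\mathcal G}\setminus\vs_q$, so $\tilde X_q$ is a proper subshift of the irreducible sofic shift $X_{\mathcal G}$; excluding a cylinder of positive Parry measure strictly decreases the entropy of an irreducible SFT, giving $h(\tilde X_q)<\log 2/m$. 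The matching lower bound $\dim_H(\vs_{p_R}\setminus\vs_p)\ge\log 2/m$ follows from the sets $\Lambda_N$ of Proposition \ref{p37}: for $N$ large enough the common prefix $a_1\ldots a_m^+(\overline{a_1\ldots a_m})^{N-1}$ of elements of $\Lambda_N$ exceeds $\al(p)$, so $\Lambda_N\subset\vs_{p_R}\setminus\vs_p$, and converting $\dim_H\pi_{p_R}(\Lambda_N)\to\log 2/(m\log p_R)$ to the $\rho_{M+1}$-dimension via \eqref{eq:24} and Lemma \ref{l27} gives $\dim_H\Lambda_N\to\log 2/m$.

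For Part (ii), the argument inside the proof of Lemma \ref{l36} yields $\dim_H(\overline{\ub}\cap[p,q])\le\dim_H\us_{[p,q]}/\log p$; since $\al(\ell)\succ\al(p)$ whenever $\ell>p$, we have $\us_{[p,q]}\subseteq\vs_q\setminus\vs_p$, and Part (i) then produces $\dim_H(\overline{\ub}\cap[p,q])\le h(\tilde X_q)/\log p$. Letting $p\nearrow q$ gives the upper bound in \eqref{eq:35}. For $q=p_R$ this bound equals $\log 2/(m\log p_R)$, and the matching lower bound supplied by Proposition \ref{p37} forces equality. For $q\in(p_L,p_R)$ the strict inequality $h(\tilde X_q)<\log 2/m$ from Part (i) yields the strict inequality in \eqref{eq:35}.
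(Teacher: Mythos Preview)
Your argument is correct and follows essentially the same route as the paper. Both proofs hinge on the observation that any sequence in $\vs_q\setminus\vs_{p_L}$ has a tail lying in the sofic shift $X_{\mathcal G}$, and then compare entropies. Where the paper passes to the explicit subshift $X_{\mathcal G,N}$ (forbidding the words $a_1\ldots a_m^+(\overline{a_1\ldots a_m})^N$ and its reflection) to obtain the strict inequality for $q<p_R$, you use the cleaner abstraction $\tilde X_q=X_{\mathcal G}\cap\vs_q$; since $\tilde X_q$ is a proper closed subshift of the irreducible sofic shift $X_{\mathcal G}$ whenever $q<p_R$, the same strict entropy drop follows from \cite[Corollary 4.4.9]{Lind_Marcus_1995}. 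For the lower bound you invoke the sets $\Lambda_N$ from Proposition~\ref{p37}, while the paper uses a follower set in $X_{\mathcal G}$; both approaches are equivalent. Your treatment of Part~(ii) via $\us_{[p,q]}\subseteq\vs_q\setminus\vs_p$ is slightly more streamlined than the paper's, which argues directly that $\us_{(p,q)}\subseteq X_{\mathcal G,N}$.

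Two small points deserve a comment. First, the equality $\dim_H\tilde X_q=h(\tilde X_q)$ does not follow from Lemma~\ref{l28} (which concerns only $\us_q$); however, your argument only needs the inequality $\dim_H\tilde X_q\le h(\tilde X_q)$, which holds for every subset of $\Omega$. Second, the assertion $X_{\mathcal G}\subseteq\vs_{p_R}$ (needed to conclude $\tilde X_{p_R}=X_{\mathcal G}$) requires checking the inequalities $\overline{\al(p_R)}\lle\sigma^n((c_i))\lle\al(p_R)$ at non-aligned positions $n$; this follows from the inequalities in Lemma~\ref{l31}(i) exactly as in the verification of $\Lambda_N\subset\us_{[p,p_R]}$ in the proof of Proposition~\ref{p37}, but it is worth noting that it is not automatic.
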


\begin{proof}
First we prove (i).  By Lemma \ref{l31} there exists a word $a_1\ldots a_m$ such that
  \begin{equation} \label{eq:36}
  \al(p_L)=(a_1\ldots a_m)^\f\qquad\textrm{and}\qquad \al(p_R)=a_1\ldots a_m^+(\overline{a_1\ldots a_m})^\f.
  \end{equation}
  Take a sequence $(c_i)\in\vs_{p_R}\setminus\vs_{p_L}$.  Then there exists $j\ge 0$ such that
  \[c_{j+1}\ldots c_{j+m}=a_1\ldots a_m^+\qquad \textrm{or}\qquad  c_{j+1}\ldots c_{j+m}=\overline{a_1\ldots a_m^+}.\]
We claim that the tail sequence $c_{j+1}c_{j+2}\ldots\in X_{\mathcal G}$, where $X_{\mathcal G}$ is the sofic subshift determined by the labeled graph in Figure \ref{fig:1}.

By symmetry  we may  assume $c_{j+1}\ldots c_{j+m}=a_1\ldots a_m^+$. Since  $(c_i)\in\vs_{p_R}$, by (\ref{eq:36}) the sequence $(c_i)$ satisfies
\begin{equation}\label{eq:37}
\overline{a_1\ldots a_m^+}(a_1\ldots a_m)^\f\lle \si^{n}((c_i))\lle a_1\ldots a_m^+(\overline{a_1\ldots a_m})^\f
\end{equation}
for all ${n}\ge 0$. Taking  ${n}=j$ in  (\ref{eq:37})
it follows that $c_{j+m+1}\ldots c_{j+2m}\lle \overline{a_1\ldots a_m}$. Again, by (\ref{eq:37}) with ${n}=j+m$ we obtain that
$
  c_{j+m+1}\ldots c_{j+2m}\lge \overline{a_1\ldots a_m^+}.
$
So,  if $c_{j+1}\ldots c_{j+m}=a_1\ldots a_m^+$, then the next word $c_{j+m+1}\ldots c_{j+2m}$  has only two choices: it either equals $\overline{a_1\ldots a_m^+}$ or it equals $\overline{a_1\ldots a_m}$.
\begin{itemize}
\item If $c_{j+m+1}\ldots c_{j+2m}=\overline{a_1\ldots a_m^+}$, then by symmetry and using (\ref{eq:37}) it follows that the next word $c_{j+2m+1}\ldots c_{j+3m}$ equals either $a_1\ldots a_m$ or $a_1\ldots a_m^+$.

\item If $c_{j+m+1}\ldots c_{j+2m}=\overline{a_1\ldots a_m}$, then $c_{j+1}\ldots c_{j+2m}=a_1\ldots a_m^+\overline{a_1\ldots a_m}$. By using (\ref{eq:37}) with $k=j$ we have $c_{j+2m+1}\ldots c_{j+3m}\lle \overline{a_1\ldots a_m}$. Again, by (\ref{eq:37}) with $k=j+2m$ it follows that
the next word $c_{j+2m+1}\ldots c_{j+3m}$ equals either $\overline{a_1\ldots a_m^+}$ or  $\overline{a_1\ldots a_m}$.
\end{itemize}
By iteration of the above arguments we conclude that $c_{j+1}c_{j+2}\ldots \in X_{\mathcal G}$. This proves the claim: any sequence in $\vs_{p_R}\setminus\vs_{p_L}$ eventually ends with an element of $X_{\mathcal G}$.

Using the claim and  (\ref{eq:31}) it follows that
\begin{equation}\label{eq:38}
\dim_H(\vs_{p_R}\backslash \vs_p) \le \dim_H(\vs_{p_R}\setminus\vs_{p_L}) \le \dim_H X_{\mathcal G} \le \ { h(X_{\mathcal G})} =\frac{\log 2}{m}.
\end{equation}
On the other hand, since $p<p_R$ we have $\alpha(p){\prec}\alpha(p_R)=a_1\dots a_m^+(\overline{a_1\dots a_m})^\infty$, so there exists ${K}\in\N$ such that $\alpha(p){\prec}a_1\dots a_m^+(\overline{a_1\dots a_m})^{K} 0^\infty$. Hence, the follower set 
\[
F_{X_{\mathcal G}}\big(a_1\ldots a_m^+(\overline{a_1\dots a_m})^{ K}\big):=\set{(d_i)\in X_{\mathcal G}: d_1\ldots d_{m({ K}+1)}=a_1\ldots a_m^+(\overline{a_1\dots a_m})^{ K}} 
\]
is a subset of $\vs_{p_R}\setminus\vs_{p}$.
By (\ref{eq:31}) this implies that 
\begin{equation}\label{eq:39}
\dim_H(\vs_{p_R}\setminus\vs_{p})\ge \dim_H F_{X_{\mathcal G}}\big(a_1\ldots a_m^+(\overline{a_1\dots a_m})^{ K}\big)= { h(X_{\mathcal G})} =\frac{\log 2}{ m},
\end{equation}
where the first equality follows since, in view of the homogeneous structure of $X_{\mathcal G}$, there is no more efficient covering of this set than by cylinder sets of equal depth. Combining (\ref{eq:38}) and (\ref{eq:39}) gives 
\begin{equation}\label{eq:310}
\dim_H(\vs_{p_R}\setminus\vs_{p})=\frac{\log 2}{ m }.
\end{equation}

Next, observe that for $q\in(p_L, p_R)$ there exists $N\in\N$ such that
{\[
\al(q)\prec a_1\ldots a_m^+(\overline{a_1\ldots a_m})^N0^\f.
\] }
Then the words $a_1\ldots a_m^+(\overline{a_1\ldots a_m})^N$ and {$\overline{a_1\ldots a_m^+} (a_1\ldots a_m)^N$} are forbidden in $\vs_{q}$. By the above argument it follows that any sequence in $\vs_{q}\setminus\vs_{p}$ eventually ends with an element of 
{\begin{align}
\label{eq:311}
\begin{split}
X_{\mathcal{G}, N}&:=\big\{(d_i)\in X_{\mathcal G}: a_1\ldots a_m^+(\overline{a_1\ldots a_m})^N\textrm{ and }\\
& \qquad\qquad \overline{a_1\ldots a_m^+} (a_1\ldots a_m)^N\textrm{ do not occur in }(d_i)\big\}.
\end{split}
\end{align}}
{By (\ref{eq:31})} this implies that 
\begin{equation*}
\dim_H(\vs_q\setminus\vs_{p})\le\dim_H X_{\mathcal{G}, N}\le { h(X_{\mathcal G, N})} 
< { h(X_{\mathcal G})} =\frac{\log 2}{ m },
\end{equation*}
where the strict inequality holds by \cite[Corollary 4.4.9]{Lind_Marcus_1995}, since $X_{\mathcal G}$ is a transitive sofic subshift and $X_{\mathcal G, N}\subsetneq X_{\mathcal G}$. {Later in Lemma \ref{lem:XGN-entropy} we will give an explicit formula for $h(X_{\mathcal G, N})$.}
This completes the proof of (i).

To prove (ii), suppose first that $q\in(p_L, p_R)$. Let $a_1\dots a_m$ be the word such that \eqref{eq:36} holds. Take $p\in(p_L, q)\cap\overline{\ub}$. {By Lemma \ref{l21}}  it follows that for any $\ell\in(p, q)$ the quasi-greedy expansion $\al(\ell)$ begins with $a_1\ldots a_m^+$. As in the proof of (i), since $q<p_R$ it follows that there exists $N\in\N$ depending only on $q$ such that 
\[
\mathbf{U}_{(p,q)}:=\{\al(\ell): \ell\in\overline{\mathscr{U}}\cap(p,q)\}\subseteq X_{\mathcal{G},N},
\]
where $X_{\mathcal{G},N}$ was defined in \eqref{eq:311}. Therefore, by Lemma \ref{l36},
\begin{align*}
\lim_{p\nearrow q}\dim_H(\overline{\ub}\cap(p,q)) &\leq \lim_{p\nearrow q}\frac{h(\mathbf{U}_{(p,q)})}{\log p} \le \lim_{p\nearrow q}\frac{h(X_{\mathcal{G},N})}{\log p}\\
&=\frac{h(X_{\mathcal{G},N})}{\log q}<\frac{h(X_{\mathcal{G}})}{\log q}=\frac{\log 2}{m\log q}.
\end{align*}

For $q=p_R$ we have $h(\mathbf{U}_{(p,q)})\leq h(X_{\mathcal G})$, so as in the above calculation we obtain
\[
\lim_{p\nearrow p_R}\dim_H(\overline{\ub}\cap(p,p_R))\le\frac{\log 2}{m\log p_R}.
\]
The reverse inequality holds by Proposition \ref{p37}, and hence we have equality in \eqref{eq:35} for $q=p_R$.
\end{proof}

\begin{corollary}
\label{cor:39}
For any entropy plateau $[p_L, p_R]\subset(q_{KL}, M+1]$ and any $q\in(p_L,p_R]$,
\[
\dim_H(\vs_q\setminus\vs_{p_L})\le \dim_H\vs_{p_L},
\]
with equality if and only if $M=2k+1\geq 3$ and $q=p_R=q_\star(M)$.
\end{corollary}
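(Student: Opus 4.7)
The plan is to combine the sharp dimension formula for $\vs_{p_R}\setminus\vs_p$ from Proposition \ref{p38}(i) with the entropy lower bound from Lemma \ref{l31}(ii), after rewriting $\dim_H\vs_{p_L}$ as a topological entropy via Lemmas \ref{l28} and \ref{l29}.

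First I would observe that by Lemma \ref{l29} we have $\dim_H\vs_{p_L}=\dim_H\us_{p_L}$, and by Lemma \ref{l28} the latter equals $h(\us_{p_L})$. Hence by Lemma \ref{l31}(ii),
\[
\dim_H\vs_{p_L}=h(\us_{p_L})\ge\frac{\log 2}{m},
\]
with equality if and only if $M=2k+1\ge 3$ and $[p_L,p_R]=[k+2,q_\star(M)]$.

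Next I would split into two cases according to whether $q<p_R$ or $q=p_R$. If $q\in(p_L,p_R)$, then Proposition \ref{p38}(i), applied with $p=p_L$, yields
\[
\dim_H(\vs_q\setminus\vs_{p_L})<\dim_H(\vs_{p_R}\setminus\vs_{p_L})=\frac{\log 2}{m}\le\dim_H\vs_{p_L},
\]
so the inequality is \emph{strict} and equality cannot occur. If $q=p_R$, then the same proposition gives
\[
\dim_H(\vs_{p_R}\setminus\vs_{p_L})=\frac{\log 2}{m}\le\dim_H\vs_{p_L},
\]
and the equality $\dim_H(\vs_{p_R}\setminus\vs_{p_L})=\dim_H\vs_{p_L}$ is equivalent to $h(\us_{p_L})=\log 2/m$, which by Lemma \ref{l31}(ii) happens precisely when $M=2k+1\ge 3$ and $[p_L,p_R]=[k+2,q_\star(M)]$, i.e., $p_R=q_\star(M)$.

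No genuine obstacle is expected here, since everything follows by assembling previously established results; the only care needed is to verify that (\ref{eq:310}) in the proof of Proposition \ref{p38}(i) is stated for the range $p\in[p_L,p_R)$, so that it applies with $p=p_L$, and to keep straight the two different sources of the quantity $\log 2/m$—once as the dimension of the symmetric difference on the boundary of the plateau, once as the lower bound for the entropy at the left endpoint—whose coincidence is exactly what characterises the exceptional plateau $[k+2,q_\star(M)]$.
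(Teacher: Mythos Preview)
Your proof is correct and follows essentially the same approach as the paper, which simply states that the corollary is immediate from Lemma \ref{l31}(ii), Lemmas \ref{l28} and \ref{l29}, and Proposition \ref{p38}(i). You have spelled out precisely the chain of inequalities the paper leaves implicit, including the correct identification of the two places where equality can occur.
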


\begin{proof}
Immediate from Lemma \ref{l31}(ii), Lemmas \ref{l28} and \ref{l29}, and Proposition \ref{p38}(i).
\end{proof}

As a final preparation for the proofs of Theorems \ref{main:1}, \ref{main-prime} and \ref{main:2}, we need the following results about the local dimension of the bifurcation sets $\overline{\bb}$ and $\overline{\ub}$. We first recall from \cite[Theorem 2]{Kalle-Kong-Li-Lv-2016} the local dimension of $\bb$.

  \begin{lemma}
  \label{l310}
  For any $q\in\overline{\bb}$ we have
  \[
  \lim_{\de\ra 0}\dim_H(\overline{\bb}\cap(q-\de, q+\de))=\dim_H\u_q.
  \]
  \end{lemma}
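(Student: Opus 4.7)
The proof naturally splits into upper and lower bounds. For the upper bound, the plan is to use the inclusion $\overline{\bb}\subseteq\overline{\ub}$ together with Lemma \ref{l36} applied to the interval $I_\de=[q-\de,q+\de]$. Since $\mathbf{U}_{I_\de}\subseteq\us_{q+\de}$, one obtains
\[
\dim_H(\overline{\bb}\cap(q-\de,q+\de)) \le \dim_H(\overline{\ub}\cap I_\de) \le \frac{h(\us_{q+\de})}{\log(q-\de)},
\]
and letting $\de\searrow 0$ while invoking continuity of $H$ from Lemma \ref{l25}(ii) yields the upper bound $h(\us_q)/\log q=\dim_H\u_q$. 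The case $q=q_{KL}$ is trivial, since then $\dim_H\u_q=0$.

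The lower bound is substantially harder. Assume $q>q_{KL}$, and first suppose $q\in\bb$. Since $\bb$ has no isolated points, $q$ is a two-sided accumulation point of $\bb$. The strategy is to construct, in any neighborhood $(q-\de,q+\de)$, a Cantor-like family of quasi-greedy expansions approximating $\al(q)$: fix a long prefix $a_1\ldots a_N$ of $\al(q)$ and permit a positive-density set of subsequent positions to branch, subject to the Parry-type inequalities of Lemma \ref{l34} so that each sequence produced is $\al(p)$ for some $p\in\overline{\ub}$. By the bi-Lipschitz transfer of Lemma \ref{l27} combined with (\ref{eq:24}), the resulting parameter set has Hausdorff dimension arbitrarily close to $h(\us_q)/\log q=\dim_H\u_q$. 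An extra pruning step is required compared with Proposition \ref{p37}: the constructed parameters must further avoid the countably many open entropy plateaus so they lie in $\bb$ itself, not merely in $\overline{\ub}$. Only at most two parameters per plateau need to be discarded, so this pruning leaves the dimension unaffected. For $q\in\overline{\bb}\setminus\bb$, $q$ is an endpoint of some entropy plateau $[p_L,p_R]\subset(q_{KL},M+1]$; using that the endpoints are accumulation points of $\overline{\bb}$ (cf.\ the discussion following Theorem \ref{main:2}), one applies the previous construction at nearby $q_n\in\bb$ tending to $q$ and concludes via continuity of $H$ and of $q\mapsto\dim_H\u_q$.

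The main obstacle is the combinatorial construction in the lower bound, specifically verifying that the pruning to avoid entropy plateaus does not reduce the dimension. This is where the structural characterization of plateaus from Lemma \ref{l31} is essential: plateaus of period $m$ correspond to very specific eventually periodic patterns in $\al(p_L)$ with a prescribed reflection symmetry, and generic perturbations of $\al(q)$ simply do not produce such patterns. A secondary technical point is to ensure that the prefix length $N$ can be chosen so that all perturbations remain within $(q-\de,q+\de)$; this is controlled by the Lipschitz estimate on $\al^{-1}$ implicit in the proof of Lemma \ref{l35}.
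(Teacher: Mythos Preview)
The paper does not prove this lemma at all: it is simply \emph{recalled} from \cite[Theorem~2]{Kalle-Kong-Li-Lv-2016}, so there is no argument here to compare against. Your upper bound is correct and is essentially the computation the paper carries out later for Lemma~\ref{l311}.

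Your lower bound, however, is only a strategy, and the pruning step contains a real gap. You assert that once a Cantor set $C\subset\overline{\ub}\cap(q-\de,q+\de)$ of dimension close to $\dim_H\u_q$ has been built, one passes to $C\cap\overline{\bb}$ by discarding ``at most two parameters per plateau''. This is not justified and is in general false: $\overline{\ub}$ meets each entropy plateau $[p_L,p_R]$ in an uncountable set of \emph{positive} Hausdorff dimension --- this is precisely the content of Theorem~\ref{thm:41} --- so a set $C\subset\overline{\ub}$ constructed without further care may lose a positive-dimensional piece inside a single plateau. The appeal to Lemma~\ref{l31} does not help here: the periodic/reflection structure described there characterizes the quasi-greedy expansion of the \emph{left endpoint} $p_L$, not which sequences $\al(p)$ with $p\in\overline{\ub}$ happen to lie in $(p_L,p_R)$. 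To make the argument work you must either build $C$ inside $\overline{\bb}$ from the outset (which requires combinatorial control ensuring the constructed $\al$-sequences avoid all plateau patterns simultaneously --- this is the substance of the cited result), or prove a uniform bound showing $\dim_H\big(C\cap\bigcup[p_L,p_R]\big)$ is strictly smaller than the target, which itself needs something on the order of Theorem~\ref{thm:41} together with control over all nearby plateaus. As written, the proposal does neither.

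A minor side remark: the transfer from the symbolic Cantor set to the parameter set should go through the first inequality of Lemma~\ref{l36} (H\"older continuity of $\al^{-1}$), not Lemma~\ref{l27}, which concerns $\pi_q$ on $\us_q$ rather than the map $\al$.
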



For the local dimension of $\ub$, we can prove the following:
 
\begin{lemma}\label{l311}
For any $q\in(1,M+1]$ we have 
\[\lim_{\de\ra 0}\dim_H(\overline{\ub}\cap(q-\de, q+\de))\le \dim_H\u_q.\]
\end{lemma}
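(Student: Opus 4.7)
The strategy is to combine the upper bound from Lemma \ref{l36} with the inclusion $\mathbf{U}_{I_\de}\subseteq\vs_{q+\de}$, and then to use the identity $h(\vs_q)=H(q)$ together with the continuity of $H$. The case $q=M+1$ is trivial since $\dim_H\u_{M+1}=1$, so assume $q\in(1,M+1)$ and fix $\de>0$ small enough that $I_\de:=[q-\de,q+\de]\subset(1,M+1)$.

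By Lemma \ref{l36}, writing $\mathbf{U}_{I_\de}:=\{\al(\ell):\ell\in\overline{\ub}\cap I_\de\}$,
\[
\dim_H(\overline{\ub}\cap I_\de)\le\frac{h(\mathbf{U}_{I_\de})}{\log(q-\de)}.
\]
The first key step is the inclusion $\mathbf{U}_{I_\de}\subseteq\vs_{q+\de}$. For any $\ell\in\overline{\ub}$, Lemma \ref{l34} yields $\overline{\al(\ell)}\prec\si^n(\al(\ell))\lle\al(\ell)$ for all $n\ge 1$, and combining the quasi-greedy inequality $\si(\al(\ell))\lle\al(\ell)$ from Lemma \ref{l21} with the $n=1$ case of Lemma \ref{l34} gives $\al(\ell)\succ\overline{\al(\ell)}$. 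Hence $\al(\ell)\in\vs_\ell$ by \eqref{eq:25}. Since $q\mapsto\vs_q$ is increasing (immediate from Lemma \ref{l21}) and $\ell\le q+\de$, we conclude $\al(\ell)\in\vs_{q+\de}$.

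The second key step is the identity $h(\vs_q)=H(q)$, proved via the sandwich $\us_q\subseteq\vs_q\subseteq\us_{q+\de}$, valid for every $\de>0$. The first inclusion is immediate. For the second, given $(x_i)\in\vs_q$, Lemma \ref{l21} gives $\al(q)\prec\al(q+\de)$, so $\si^n((x_i))\lle\al(q)\prec\al(q+\de)$ and symmetrically $\si^n((x_i))\lge\overline{\al(q)}\succ\overline{\al(q+\de)}$ for all $n\ge 0$, whence Lemma \ref{l23} gives $(x_i)\in\us_{q+\de}$. Taking entropies produces $H(q)\le h(\vs_q)\le H(q+\de)$, and letting $\de\to 0^+$ with the continuity of $H$ (Lemma \ref{l25}) yields $h(\vs_q)=H(q)$. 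Plugging back,
\[
\dim_H(\overline{\ub}\cap I_\de)\le\frac{H(q+\de)}{\log(q-\de)}\longrightarrow\frac{H(q)}{\log q}=\dim_H\u_q
\]
as $\de\to 0^+$, where the last equality is Lemma \ref{l25}(i) and $\log q>0$ since $q>1$.

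The step I expect to be the main obstacle is the identity $h(\vs_q)=H(q)$. Lemma \ref{l29} guarantees that $\us_q$ and $\vs_q$ have the same Hausdorff dimension, but their topological entropies need not coincide a priori; the sandwich above together with the continuity of $H$ is exactly what closes the gap. Once this identity is in place, the proof is a routine limit argument combining Lemma \ref{l36} and the inclusion $\mathbf{U}_{I_\de}\subseteq\vs_{q+\de}$.
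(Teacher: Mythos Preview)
Your proof is correct and follows essentially the same approach as the paper: bound $\dim_H(\overline{\ub}\cap I_\de)$ via Lemma~\ref{l36}, show that $\al(\ell)$ for $\ell\in\overline{\ub}\cap I_\de$ lands in a symbolic set of entropy at most $H(q+\de)$, and pass to the limit using the continuity of $H$ and Lemma~\ref{l25}(i). The only difference is that you route through $\vs_{q+\de}$ and establish the auxiliary identity $h(\vs_q)=H(q)$, whereas the paper observes directly that $\al(\ell)\in\us_{q+\de}$ (the strict inequality $\al(\ell)\prec\al(q+\de)$ for $\ell<q+\de$ feeds straight into Lemma~\ref{l23}), making the detour through $\vs$ unnecessary.
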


\begin{proof}
Take $q\in(1,M+1]$. 
 By Lemmas \ref{l21}, \ref{l23} and \ref{l34} it follows that for each $\ell\in\overline{\ub}\cap(q-\de, q+\de)$ the quasi-greedy expansion $\al(\ell)$ belongs to $\us_{q+\de}$, where we set $\us_{q+\de}=\Omega$ if $q+\de>M+1$. In other words,  using the notation of Lemma \ref{l36}, 
 \[\us_{(q-\de,q+\de)}\subseteq\us_{q+\de}.\]
  We now obtain by Lemma \ref{l36} and Lemma \ref{l25},
\begin{align*}
\dim_H\big(\overline{\mathscr{U}}\cap(q-\de,q+\de)\big) &\leq \frac{h(\mathbf{U}_{(q-\de,q+\de)})}{\log(q-\de)} \leq \frac{h(\mathbf{U}_{q+\de})}{\log(q-\de)}\\
&{\le}\ \frac{\log(q+\de)}{\log(q-\de)}\dim_H \pazocal{U}_{q+\delta} \to \dim_H \pazocal{U}_q
\end{align*}
as $\de \to 0$. This completes the proof.
\end{proof}

We are now ready to prove Theorems \ref{main:1}, \ref{main-prime} and \ref{main:2}.		
		
\begin{proof}[Proof of Theorem \ref{main:1}] 
Suppose $M=1$ or $M$ is even. We prove (i) $\Leftrightarrow$ (ii) and (i) $\Rightarrow$ (iii) $\Rightarrow$ (iv) $\Rightarrow$ (i).

First we prove (i) $\Rightarrow$ (ii). Let $q\in\bb^L$, and take $p\in(1, q)$.  
 Then $H(p)< H(q)$ by the definition of $\bb^L$, so Lemma \ref{l28} implies
  \[
  \dim_H\us_p= { H(p)} <  { H(q)} =\dim_H\us_q.
  \]
Therefore,
  \[
  \dim_H(\us_q\setminus\us_p)=\dim_H\us_q>\dim_H\us_p\ge 0.
  \]

Next, we prove (ii) $\Rightarrow$ (i). Let $q\in(1,M+1]\setminus\bb^L$.  By (\ref{e16}) we have $q\in(1, q_{KL}]$ or $q\in(p_L, p_R]$ for some entropy plateau $[p_L, p_R]\subset(q_{KL}, M+1]$.  If $q\in(1,q_{KL}]$, then by Lemma \ref{l25} we have
  \[
  \dim_H(\us_q\setminus\us_p)=\dim_H\us_q=0
  \]
  for any $p\in(1, q)$.
  Suppose $q\in(p_L, p_R]\subset(q_{KL}, M+1]$, and take $p\in(p_L, q)$. By Corollary \ref{cor:39} and Lemma \ref{l29} it follows that
  \begin{align*}
  \dim_H(\us_q\setminus\us_p) \le \dim_H(\us_{q}\setminus\us_{p_L})
  &=\dim_H(\vs_{q}\setminus\vs_{p_L})\\
  &<\dim_H\vs_{p_L}=\dim_H\us_{p_L}\le \dim_H\us_q.
  \end{align*}
Thus, (ii) $\Rightarrow$ (i).

We next prove (i) $\Rightarrow$ (iii). Take $q\in\bb^L$. Then $q>q_{KL}$ by (\ref{e16}), so Lemma \ref{l25} yields $\dim_H\u_q>0$. Thus, it remains to prove that $\lim_{p\nearrow q}\dim_H(\bb\cap(p, q))=\dim_H\u_q$. Since $\bb\subset\ub$, by Lemma \ref{l311} it suffices to prove
  \begin{equation}\label{eq:312}
  \lim_{p\nearrow q}\dim_H(\bb\cap(p, q))\ge \dim_H\u_q.
  \end{equation}

  Fix $\ep>0$. By Lemma \ref{l25} the function $q\mapsto \dim_H\u_q$ is continuous, so there exists $p_0:=p_0(\ep)\in(1, q)$ such that
    \begin{equation}
  \label{eq:313}
 \dim_H\u_{p}\ge \dim_H\u_q-\ep\qquad\textrm{for all}\quad p \in(p_0, q).
  \end{equation}
  Since $q\in\bb^L$, by the topological structure of the bifurcation set $\bb^L$ there exists a sequence of entropy plateaus $\set{[p_L(n), p_R(n)]}$ such that $p_L(n)\nearrow q$ as $n\ra\f$. Fix $p\in(p_0,q)$. Then there exists a large integer $N$ such that $p_L(N)\in(p, q)$. Observe that $p_L(N)\in\bb^L\subset\overline{\bb}$ and the difference $\overline{\bb}\setminus\bb$ is countable. By Lemma \ref{l310} there exists $\de>0$ such that
  \begin{equation}
	\label{eq:314}
 (p_L(N)-\de, p_L(N)+\de) \subseteq (p, q),
	\end{equation}
	and
	\begin{equation}
	\label{eq:315}
	\dim_H\big(\bb\cap(p_L(N)-\de, p_L(N)+\de)\big) \ge \dim_H\u_{p_L(N)}-\ep.
	\end{equation}
 By (\ref{eq:313}), \eqref{eq:314} and (\ref{eq:315}) it follows that
  \begin{align*}
  \dim_H(\bb\cap(p, q))&\ge\dim_H\big(\bb\cap(p_L(N)-\de, p_L(N)+\de)\big)\\
  &\ge\dim_H\u_{p_L(N)}-\ep\ge \dim_H\u_q-2\ep.
  \end{align*}
Since this holds for all $p\in(p_0(\ep),q)$, we obtain (\ref{eq:312}).
This proves (i) $\Rightarrow$ (iii).

Note that (iii) $\Rightarrow$ (iv) follows directly from Lemma \ref{l311} {since $\bb\subset\ub$}. 

It remains to prove (iv) $\Rightarrow$ (i). Let $q\in(1,M+1]\setminus\bb^L$.  By (\ref{e16}) it follows that $q\in(1, q_{KL}]$ or $q\in(p_L, p_R]$ for some entropy plateau $[p_L, p_R]\subset(q_{KL}, M+1]$.  If $q\in(1, q_{KL}]$, then $\dim_H\u_q=0$. Now we consider  $q\in(p_L, p_R]\subset(q_{KL}, M+1]$. If $q\notin\overline{\ub}$, then $\lim_{p\nearrow q}\dim_H(\ub\cap(p, q))=0$.
So let $q\in\overline{\ub}\cap(p_L, p_R]$. If $q<p_R$, then Proposition \ref{p38}(ii), Lemma \ref{l31}(ii) and Lemma \ref{l25} give
\begin{equation}
\label{eq:316}
\lim_{p\nearrow q}\dim_H(\overline{\ub}\cap(p, q))<\frac{\log 2}{m\log q}\leq \frac{h(\us_{p_L})}{\log q}=\frac{h(\us_q)}{\log q}=\dim_H \u_q.
\end{equation}
Similarly, if $q=p_R$, then Lemma \ref{l31}(ii) holds with strict inequality, and we obtain the same end result as in \eqref{eq:316}, but with the first inequality replaced by ``$\leq$" and the second inequality replaced by ``$<$".	This proves (iv) $\Rightarrow$ (i), and completes the proof of Theorem \ref{main:1}.
\end{proof}	
	
\begin{proof}[Proof of Theorem \ref{main-prime}]
The proof of Theorem \ref{main-prime} is, for the most part, the same as the proof of Theorem \ref{main:1}. Asssume $M=2k+1\geq 3$. We need only check the following two facts for the entropy plateau $[p_L,p_R]=[k+2,q_\star]$, where $q_\star=q_\star(M)$:
\begin{equation}
\label{eq:317}
\dim_H(\us_{q_\star}\backslash \us_p)=\dim_H(\us_{q_\star}) \qquad\mbox{for any $p\in(1,q_\star)$},
\end{equation}
and
\begin{equation}
\label{eq:318}
\lim_{p\nearrow q_\star}\dim_H(\overline{\ub}\cap(p, q_\star))=\dim_H \u_{q_\star}.
\end{equation}
Here \eqref{eq:317} is clear for $p\in(1, {k+2})$, since $\dim_H \us_p<\dim_H \us_{q_\star}$. For $p\in[{k+2}, q_\star)$, \eqref{eq:317} follows from Proposition \ref{p38}(i) and the equality statement in Lemma \ref{l31}(ii), noting that {$[k+2, q_\star]$ is an entropy plateau of} period $m=1$.

Similarly, \eqref{eq:318} follows from the equality statements in Proposition \ref{p38}(ii) and Lemma \ref{l31}(ii).
\end{proof}

\begin{proof}[Proof of Theorem \ref{main:2}]
The proofs of (i) $\Rightarrow$ (ii) and (iii) $\Rightarrow$ (iv) are completely analogous to the proofs of the corresponding implications in Theorem \ref{main:1}. 

Consider the implication (ii) $\Rightarrow$ (i). Suppose $q\in(1,M+1]\setminus\bb^R$. By (\ref{e16}) we have $q\in(1, q_{KL})$ or $q\in[p_L, p_R)$ for some entropy plateau $[p_L, p_R]\subset(q_{KL}, M+1]$. A similar argument as in the proof of Theorem \ref{main:1} shows that either $\dim_H\us_q=0$ for $q\in(1,q_{KL})$, or $\dim_H(\us_r\setminus\us_q)<\dim_H\us_r$ for any $r\in(q, p_R)$. This proves (ii) $\Rightarrow$ (i).

Next, consider the implication (i) $\Rightarrow$ (iii). Take $q\in\bb^R$. Then $q\ge q_{KL}$. If $q\ne q_{KL}$, then by Lemma \ref{l25} we have $\dim_H\u_q>0$.  Since $q\in\bb^R$, there exists a sequence of entropy plateaus $\set{[\tilde p_L(n), \tilde p_R(n)]}$ such that $\tilde p_L(n)\searrow q$ as $n\ra\f$. Using the continuity of the function $q\mapsto \dim_H\u_q$ and Lemma \ref{l310},  we can show as in the proof of Theorem \ref{main:1} that
$\lim_{r\searrow q}\dim_H(\bb\cap(q, r))=\dim_H\u_q.$
This proves (i) $\Rightarrow$ (iii).

Finally, consider the implication (iv) $\Rightarrow$ (i).
For $q\in(1,M+1]\setminus\bb^R$ we have $q\in(1,q_{KL})$ or $q\in[p_L, p_R)$ for some entropy plateau $[p_L, p_R]\subset(q_{KL},M+1]$. By the same argument as in the proof of Theorem \ref{main:1} we can prove that either $\dim_H\u_q=0$ for $q< q_{KL}$, or $\lim_{r\searrow q}\dim_H(\overline{\ub}\cap(q, r))<\dim_H\u_q$ for $q\in[p_L, p_R)$. This establishes (iv) $\Rightarrow$ (i).
\end{proof}

\section{Hausdorff dimension of $\ub\setminus\bb$} \label{s4}

In this section we will calculate the Hausdorff dimension of the difference set $\ub\setminus\bb$ and prove Theorem \ref{main:4}. First, we prove the following result for the local dimension of $\ub$ inside any entropy plateau $[p_L, p_R]$.

\begin{theorem}
\label{thm:41}
Let $[p_L, p_R]\subset(q_{KL}, M+1)$ be an entropy plateau of period $m$. Then
\[\dim_H(\ub\cap[p_L, p_R])=\frac{\log 2}{m\log p_R}.\]
\end{theorem}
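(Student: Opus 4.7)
The result splits into matching lower and upper bounds.

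\textbf{Lower bound.} Apply Proposition \ref{p37} with $p=p_L$: since $[p_L,p_R]$ is an entropy plateau of period $m$, this immediately gives $\dim_H(\ub\cap[p_L,p_R])\ge \log 2/(m\log p_R)$.

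\textbf{Upper bound.} The matching upper bound is the delicate part. Since $\ub$ and $\overline{\ub}$ differ by at most countably many points, it suffices to estimate $\dim_H(\overline{\ub}\cap[p_L,p_R])$. Moreover, since $p_L$ is not a right accumulation point of $\ub$ (as noted after Theorem \ref{main:2}), there exists $\delta_0>0$ with $\overline{\ub}\cap(p_L,p_L+\delta_0)=\emptyset$, so the problem reduces to bounding $\dim_H(\overline{\ub}\cap[p_L+\delta_0,p_R])$. Here I exploit two structural facts:
\begin{enumerate}[{\rm(i)}]
\item For $\ell\in\overline{\ub}\cap(p_L,p_R]$, Lemma \ref{l31} and the arguments in Proposition \ref{p38}(i) yield $\alpha(\ell)\in X_{\mathcal G}$ (in fact with prefix $a_1\ldots a_m^+$).
\item By Lemma \ref{l27}, $\pi_{p_R}$ is bi-Lipschitz on $\us_{p_R}$ with respect to $\rho_{p_R}$, and a cylinder-counting argument at scale $p_R^{-mN}$ for $X_{\mathcal G}$ gives $\dim_H\pi_{p_R}(X_{\mathcal G})=h(X_{\mathcal G})/\log p_R=\log 2/(m\log p_R)$.
\end{enumerate}
I would then write
\[
\overline{\ub}\cap(p_L,p_R)=\bigcup_{n\ge 1}\big(\overline{\ub}\cap(p_L,q_n]\big),\qquad q_n\nearrow p_R,
\]
and aim to prove that $\dim_H(\overline{\ub}\cap(p_L,q_n])\le \log 2/(m\log q_n)$ for each $n$. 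Since $\log 2/(m\log q_n)\searrow \log 2/(m\log p_R)$, passing to the supremum (equivalently, the limit) over $n$ would then give the desired bound $\dim_H(\overline{\ub}\cap[p_L,p_R])\le \log 2/(m\log p_R)$.

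\textbf{Main obstacle.} The central difficulty is that $\pi_{p_R}\circ\alpha$ is Lipschitz but \emph{not} bi-Lipschitz on $\overline{\ub}\cap[p_L,p_R]$: the inverse bound derived from the estimate $|\ell_2-\ell_1|\le \ell_2^{2-n}$ in Lemma \ref{l36} only yields a Hölder inverse of exponent $\log p_L/\log p_R<1$. Applying Lemma \ref{l36} directly to $[p_L+\delta_0,p_R]$ gives only the weaker bound $h(X_{\mathcal G})/\log(p_L+\delta_0)=\log 2/(m\log(p_L+\delta_0))$, which exceeds the target. Similarly, applying Lemma \ref{l36} to $[p_L+\delta_0,q_n]$ produces $h(X_{\mathcal{G},N(q_n)})/\log(p_L+\delta_0)$, and since $h(X_{\mathcal{G},N(q_n)})\to \log 2/m$ as $n\to\infty$, this bound fails to approach $\log 2/(m\log p_R)$. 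Closing this gap — the heart of the proof — should be accomplished by a Moran-type adaptive cover of $\overline{\ub}\cap[p_L+\delta_0,p_R]$ using cylinders of $X_{\mathcal G}$ at variable depths: near $p_R$ one uses cylinders of length $mN_R\approx \log(1/\delta)/(m\log p_R)$, while pieces far from $p_R$ are handled via a renormalization that realizes the portion of $\overline{\ub}$ in $[p_L+\delta_0,q_n]$ as (a union of) scaled copies of $\overline{\ub}\cap[p_L,p_R]$ with contraction ratio tied to $p_R$ (not $p_L$), enabling an inductive/self-similar bound of $\log 2/(m\log p_R)$ throughout.
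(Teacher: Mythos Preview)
Your lower bound is exactly right and matches the paper. The upper bound, however, has a genuine gap: you correctly diagnose the obstacle (Lemma~\ref{l36} only yields the H\"older exponent $\log p$ at the \emph{left} endpoint of the interval, giving $\log 2/(m\log p_L)$ rather than $\log 2/(m\log p_R)$), but your proposed remedy---an ``adaptive Moran cover'' together with a renormalization realizing $\overline{\ub}\cap[p_L+\delta_0,q_n]$ as scaled copies of $\overline{\ub}\cap[p_L,p_R]$ with contraction tied to $p_R$---is not substantiated. There is no evident self-similar structure of $\overline{\ub}$ inside an entropy plateau with that particular contraction ratio, and nothing in the paper's toolkit produces one; as stated, this step is a hope rather than an argument.

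The paper closes the gap by a different, quantitative idea. One decomposes $(p_L,p_R)$ into the \emph{disjoint} pieces $[q_n,q_{n+1}]$ (with $q_n$ as in \eqref{e43}), and observes that for $\ell\in\overline{\ub}\cap[q_n,q_{n+1}]$ the sequence $\alpha(\ell)$ lies not merely in $X_{\mathcal G}$ but in the proper subshift $X_{\mathcal G,n+1}$ of \eqref{eq:311}, whose entropy is $(\log\varphi_{n+1})/m$ with $\varphi_{n+1}<2$ (Lemma~\ref{lem:XGN-entropy}). Lemma~\ref{l36} then gives $\dim_H(\overline{\ub}\cap[q_n,q_{n+1}])\le (\log\varphi_{n+1})/(m\log q_n)$. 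The crux is the numerical inequality
\[
\frac{\log\varphi_{n+1}}{\log 2}<\frac{\log q_n}{\log p_R}\qquad(n\ge 1),
\]
which says precisely that the entropy deficit of $X_{\mathcal G,n+1}$ compensates for the H\"older loss in passing from $p_R$ to $q_n$. This is established by a transversality-type estimate: writing down the polynomials whose roots in $[1/(M+1),1]$ are $1/p_R$ and $1/q_n$, bounding $P'$ from below on $[1/p_R,1/p_L]$, and thereby controlling $1/q_n-1/p_R$ well enough to verify the inequality for every $n$ (Lemmas~\ref{lem:polynomials}--\ref{lem:46}). Countable stability then gives the upper bound. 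This explicit trade-off between $h(X_{\mathcal G,N})$ and the location of $q_n$ is the missing ingredient in your plan.
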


Observe that the lower bound in Theorem \ref{thm:41}, that is, the inequality 
$$\dim_H(\ub\cap[p_L, p_R])\ge\frac{\log 2}{m\log p_R},$$
follows from Proposition \ref{p37} by setting $p=p_L$. The proof of the reverse inequality is more tedious, and we will give it in several steps.

Observe that $\inf\ub=q_{KL}$, and any entropy plateau $[p_L, p_R]\subset(q_{KL}, M+1]$ satisfies $\al(q_{KL})\prec \al(p_L)\prec \al(M+1)$. In the following we fix an arbitrary entropy plateau $[p_L, p_R]\subset(q_{KL}, M+1]$ of period $m$ such that $\al(p_L)=(a_1\ldots a_m)^\f$  and $\al(p_R)=a_1\ldots a_m^+(\overline{a_1\ldots a_m})^\f$. 
Recall the definition of the generalized Thue-Morse sequence $(\lambda_i)=(\lambda_i(M))$ from \eqref{eq:22}, which has the property that $\al(q_{KL})=(\lambda_i)$. If $M=1$, then
  \[
  1101\ldots=\lambda_1\lambda_2\ldots\prec (a_1\ldots a_m)^\f \prec 1^\f,
  \]
so $m\ge 3$. Similarly, if $M=2$, we have
\[
210 201 \ldots=\lambda_1\lambda_2\ldots \prec (a_1\ldots a_m)^\f \prec 2^\f,
\]
so $m\ge 2$. But when $M\geq 3$, it is possible to have $m=1$. In short, we have the inequality
\begin{equation}
\label{eq:M-plus-m}
M+m\geq 4.
\end{equation}

We divide the interval $(p_L,p_R)$ into a sequence of smaller subintervals by defining a sequence of bases $\set{q_n}_{n=1}^\f$ in $(p_L,p_R)$. Let $\hat{q}=\min(\overline{\ub}\cap(p_L,p_R))$, and for $n\ge 1$ let $q_n\in(p_L,p_R)$ be defined by
\begin{equation}\label{e43}
\al(q_n)=\left(a_1\dots a_m^+(\overline{a_1\dots a_m})^{n-1}\overline{a_1\dots a_m^+}\right)^\f.
\end{equation}
Note that $\hat{q}$ is a de Vries-Komornik number which has a Thue-Morse type quasi-greedy expansion
\begin{equation}\label{e44}
\al(\hat{q})=a_1\dots a_m^+\ \ \overline{a_1\dots a_m}\ \ \overline{a_1\dots a_m^+}\,a_1\dots a_m^+\cdots 
\end{equation}
That is, $\al(\hat{q})$ is the sequence $\al_1\al_2\dots$ given by $\al_1\dots \al_m=a_1\dots a_m^+$, and recursively, for $i\geq 0$, $\al_{2^i m+1}\dots \al_{2^{i+1}m}=\overline{\al_1\dots\al_{2^i m}\,}^+$.
Then  $\al(q_1)\prec \al(\hat{q})\prec \al(q_2)\prec\cdots\prec \al(p_R)$, and $\al(q_n)\nearrow \al(p_R)$ as $n\ra\f$. By Lemma \ref{l21} it follows that
\[
q_1<\hat{q}<q_2<q_3<\cdots<p_R, \quad \textrm{and}\quad q_n\nearrow p_R \quad\textrm{as}\ n\ra\f.
\]

{We will bound the dimension of $\overline{\ub}\cap[q_n, q_{n+1}]$ for each $n\in\N$. In preparation for this, we first determine the entropy of the subshift $X_{\mathcal{G},N}$ defined in \eqref{eq:311}.

\begin{lemma} \label{lem:XGN-entropy}
The topological entropy of $X_{\mathcal{G},N}$ is given by
\[
h(X_{\mathcal{G},N})=\frac{\log \varphi_N}{m},
\]
where $\varphi_N$ is the unique root in $(1,2)$ of $1+x+\dots+x^{N-1}=x^N$.
\end{lemma}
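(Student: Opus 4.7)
The plan is to realize $X_{\mathcal{G},N}$ as a vertex shift on an auxiliary finite graph $H_N$ and read off its topological entropy from the Perron eigenvalue of the corresponding adjacency matrix. First, identify each element of $X_{\mathcal{G}}$ with its sequence of $m$-blocks chosen from $\{a_1\ldots a_m,\,a_1\ldots a_m^+,\,\overline{a_1\ldots a_m},\,\overline{a_1\ldots a_m^+}\}$, relabeled as $\{1,2,3,4\}$. Then $X_{\mathcal{G}}$ is the vertex shift on the graph with adjacency matrix $A_{\mathcal{G}}$, and $X_{\mathcal{G},N}$ is cut out by forbidding the two state-words $2\,3^N$ and $4\,1^N$.

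Construct $H_N$ on $2N$ vertices $\{1_j : 1\le j\le N-1\}\cup\{2\}\cup\{3_j : 1\le j\le N-1\}\cup\{4\}$, where the subscript $j$ records the length of the current run of $1$'s (respectively $3$'s). The edges of $H_N$ are $1_j\to 1_{j+1}$ for $1\le j\le N-2$, $1_j\to 2$ for $1\le j\le N-1$, $2\to 3_1$, $2\to 4$, together with their symmetric counterparts for $3_j$ and $4$. Then admissible block-sequences in $X_{\mathcal{G},N}$ correspond to walks in $H_N$, modulo a handful of exceptional sequences such as $1^\infty$ and $3^\infty$ which contribute only $O(1)$ subwords of each length and thus do not affect the exponential growth rate.

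The graph $H_N$ is strongly connected, so its adjacency matrix admits a Perron eigenvalue $\lambda>0$ governing the growth. Exploiting the symmetry $(1_j\leftrightarrow 3_j,\,2\leftrightarrow 4)$, the Perron eigenvector has the form $(u_1,\ldots,u_{N-1},v,u_1,\ldots,u_{N-1},v)$, and the eigenvalue relations reduce to
\[
v=\lambda u_1,\qquad u_{j-1}=\lambda u_j\quad(2\le j\le N-1),\qquad u_1+u_2+\cdots+u_{N-1}+v=\lambda v.
\]
Substituting $u_j=v\lambda^{-j}$ and then multiplying the last equation by $\lambda^{N-1}/v$ yields $1+\lambda+\cdots+\lambda^{N-1}=\lambda^N$. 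By Descartes' rule of signs the polynomial $p(x):=x^N-(1+x+\cdots+x^{N-1})$ has at most one positive real root, and the evaluations $p(1)=1-N<0$ and $p(2)=2^N-(2^N-1)=1>0$ place this root in $(1,2)$. Thus $\lambda=\varphi_N$.

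Finally, since each vertex of $H_N$ represents one $m$-block in $\Omega$, a walk of length $\ell$ in $H_N$ corresponds to an $\Omega$-word of length $\ell m$, and standard Perron--Frobenius theory gives that $\#B_{\ell m}(X_{\mathcal{G},N})$ grows exponentially with base $\varphi_N$. Consequently
\[
h(X_{\mathcal{G},N})=\lim_{\ell\to\infty}\frac{\log\#B_{\ell m}(X_{\mathcal{G},N})}{\ell m}=\frac{\log\varphi_N}{m}
\]
in base $M+1$ logarithms. The main obstacle will be the symbolic bookkeeping in the spectral computation needed to extract the clean polynomial equation $\lambda^N=1+\lambda+\cdots+\lambda^{N-1}$ from the $(2N)\times (2N)$ transfer matrix; the degenerate case $N=1$ (where $H_1$ collapses to the $2\to 4\to 2$ cycle and the formula reads $0=(\log 1)/m$) and the bookkeeping of the finitely many exceptional subwords outside walks in $H_N$ are both straightforward.
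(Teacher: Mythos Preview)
Your spectral computation is correct and the polynomial $\lambda^N=1+\lambda+\cdots+\lambda^{N-1}$ does characterise the Perron eigenvalue of $H_N$. There is, however, a real gap in the correspondence step: the sequences in $X_{\mathcal{G},N}$ that fail to lift to walks in $H_N$ are not just ``a handful'' like $1^\infty$ and $3^\infty$, but every sequence whose initial run of $1$'s (or $3$'s) has length $\ge N$, and these do \emph{not} contribute only $O(1)$ extra subwords per length. A length-$\ell$ block-word of the form $1^j w'$ with $j\ge N$ and $w'$ any valid length-$(\ell-j)$ label of a walk from vertex $2$ is a genuine subword of $X_{\mathcal{G},N}$ not arising from any walk in $H_N$, and there are on the order of $\sum_{j=N}^{\ell}\varphi_N^{\ell-j}\asymp\varphi_N^{\ell}$ of them. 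Since this matches the main growth rate the entropy is unaffected and your conclusion survives, but the bookkeeping as written is incorrect and must be replaced by this sharper estimate (or, more cleanly, by adjoining transient source-vertices $1_0,3_0$ with self-loops to $H_N$ and noting that the Perron eigenvalue of the enlarged, reducible graph is still $\varphi_N$).

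The paper's route is shorter and sidesteps this issue entirely. It observes that the $m$-block map $\Phi$ sending $a_1\ldots a_m,\overline{a_1\ldots a_m}\mapsto 0$ and $a_1\ldots a_m^+,\overline{a_1\ldots a_m^+}\mapsto 1$ induces an exactly two-to-one factor map from $X_{\mathcal{G},N}$ (at the block level) onto the subshift $Y\subset\{0,1\}^\N$ forbidding the single word $10^N$; the textbook fact $h(Y)=\log\varphi_N$ then gives the result immediately. In effect the paper builds the symmetry $(1\leftrightarrow 3,\,2\leftrightarrow 4)$ into the coding from the outset, whereas you rediscover it only at the eigenvector stage; your approach is more hands-on and explicit, but pays for it with the careful treatment of initial runs that the factor-map argument avoids altogether.
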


\begin{proof}
The $m$-block map $\Phi$ defined by 
{\[\Phi(a_1\dots a_m^+)=\Phi\big(\overline{a_1\dots a_m^+}\big)=1,\quad \Phi(a_1\dots a_m)=\Phi(\overline{a_1\dots a_m})=0\]}
 induces a two-to-one map $\phi$ from $X_{\mathcal{G},N}$ into $\{0,1\}^\N$. {Recall that $X_{\mathcal G, N}$ is the subset of $X_{\mathcal G}$ with forbidden blocks $a_1\ldots a_m^+(\overline{a_1\ldots a_m})^N$ and $\overline{a_1\ldots a_m^+}(a_1\ldots a_m)^N$.} Then $Y:=\phi(X_{\mathcal{G},N})$ is the  subshift of finite type in $\{0,1\}^\N$ of sequences avoiding the word {$10^N$}. It is well known that $h(Y)=\log\varphi_N$ (cf.~\cite[Exercise 4.3.7]{Lind_Marcus_1995}); hence, $h(X_{\mathcal{G},N})=(\log \varphi_N)/m$.
\end{proof}

\begin{lemma}\label{lem:44}
For any $n\ge 1$, we have
\[
\dim_H(\overline{\ub}\cap[q_n, q_{n+1}])\le \frac{\log \varphi_{n+1}}{m\log q_n}.
\]
\end{lemma}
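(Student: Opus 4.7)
The plan is to show that for each $q\in\overline{\ub}\cap[q_n,q_{n+1}]$, the quasi-greedy expansion $\alpha(q)$ lies in the subshift $X_{\mathcal{G},n+1}$ defined in \eqref{eq:311}. Once this inclusion is established, Lemma \ref{l36} combined with Lemma \ref{lem:XGN-entropy} yields
\[
\dim_H(\overline{\ub}\cap[q_n,q_{n+1}])\leq\frac{h(\mathbf{U}_{[q_n,q_{n+1}]})}{\log q_n}\leq\frac{h(X_{\mathcal{G},n+1})}{\log q_n}=\frac{\log\varphi_{n+1}}{m\log q_n},
\]
which is exactly the required bound.

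First, I would check that $\alpha(q)\in X_{\mathcal{G}}$. Since $q_1\leq q\leq q_{n+1}<p_R$ and both $\alpha(q_1)$ and $\alpha(q_{n+1})$ begin with the word $a_1\ldots a_m^+$, the monotonicity of $\alpha$ from Lemma \ref{l21} forces $\alpha_1(q)\ldots\alpha_m(q)=a_1\ldots a_m^+$. Starting from this initial label of $\mathcal G$, the inductive case-analysis from the proof of Proposition \ref{p38}(i) shows that consecutive length-$m$ blocks of $\alpha(q)$ follow the admissible transitions of $\mathcal G$, so $\alpha(q)\in X_{\mathcal G}$.

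The main step is then to exclude the two forbidden words from \eqref{eq:311}. Suppose $a_1\ldots a_m^+(\overline{a_1\ldots a_m})^{n+1}$ appears in $\alpha(q)$ starting at some position $k+1\geq 1$. Comparing $\sigma^k(\alpha(q))$ with $\alpha(q_{n+1})=a_1\ldots a_m^+(\overline{a_1\ldots a_m})^n\overline{a_1\ldots a_m^+}\cdots$ at position $m(n+1)+1$, the block $\overline{a_1\ldots a_m}$ would be pitted against $\overline{a_1\ldots a_m^+}$; since $\overline{a_1\ldots a_m}\succ\overline{a_1\ldots a_m^+}$, this gives $\sigma^k(\alpha(q))\succ\alpha(q_{n+1})\lge\alpha(q)$, contradicting the quasi-greedy property. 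For the reflected forbidden word $\overline{a_1\ldots a_m^+}(a_1\ldots a_m)^{n+1}$, an entirely analogous comparison with $\overline{\alpha(q_{n+1})}$, combined with Lemma \ref{l34} applied to $q\in\overline{\ub}$ (which gives $\sigma^k(\alpha(q))\succ\overline{\alpha(q)}\lge\overline{\alpha(q_{n+1})}$), produces the same kind of contradiction.

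The chief obstacle is the lexicographic bookkeeping at the junction position $m(n+1)+1$ in both comparisons, since it is precisely this position that forces the strict inequality. A minor nuisance is that the reflected-word case with $k=0$ has to be excluded separately, but this is immediate from the fact that $a_1>\overline{a_1}$ (as $a_1\ge\lambda_1>M/2$), so $\alpha(q)$ cannot begin with $\overline{a_1\ldots a_m^+}$. Once $\mathbf{U}_{[q_n,q_{n+1}]}\subseteq X_{\mathcal{G},n+1}$ is secured, the block-counting inequality $h(\mathbf{U}_{[q_n,q_{n+1}]})\leq h(X_{\mathcal{G},n+1})$ is immediate and the proof concludes.
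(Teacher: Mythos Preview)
Your proposal is correct and follows essentially the same route as the paper: show $\mathbf U_{[q_n,q_{n+1}]}\subseteq X_{\mathcal G,n+1}$ and then invoke Lemma~\ref{l36} together with Lemma~\ref{lem:XGN-entropy}. The paper compresses the containment into a single sentence (``by a similar argument as in the proof of Proposition~\ref{p38}'') using $\alpha(p)\in\vs_p\subseteq\vs_{q_{n+1}}$, while you unpack the forbidden-word exclusion explicitly via the quasi-greedy inequality and Lemma~\ref{l34}; the two arguments are equivalent.
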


\begin{proof}
Fix $n\geq 1$. Note by (\ref{e43}) and (\ref{e44}) that for any $p\in\overline{\ub}\cap[q_n,q_{n+1}]$, $\al(p)$ begins with $a_1\dots a_m^+$, and $\al(p)\in\vs_p \subseteq\vs_{q_{n+1}}$. By a similar argument as in the proof of Proposition \ref{p38} it follows that $\al(p)\in X_{\mathcal G}$, and $\al(p)$ does not contain the subwords {$a_1\ldots a_m^+(\overline{a_1\dots a_m})^{n+1}$ and $\overline{a_1\ldots a_m^+}(a_1\dots a_m)^{n+1}$}, where $X_{\mathcal G}$ is the sofic subshift represented by the labeled graph $\mathcal G=(G, \mathcal L)$ in Figure \ref{fig:1}. In other words, $\al(p)\in X_{\mathcal{G},n+1}$. By Lemma \ref{lem:XGN-entropy} this implies
\begin{equation}
\label{eq:entropy-rough-estimate}
h(\us_{[q_n, q_{n+1}]})\le h(X_{\mathcal{G},n+1})=\frac{\log \varphi_{n+1}}{m}.
\end{equation}
Applying Lemma \ref{l36} with $I=[q_n,q_{n+1}]$ completes the proof.
\end{proof}

}

The next step is to prove that the upper bound in Lemma \ref{lem:44} is smaller than {$\log 2/(m\log p_R)$}. This requires us to show that $q_n$ is sufficiently close to $p_R$, which we accomplish by applying a \emph{transversality} technique (see \cite{Pol-Sim-95,Sol-95}) to certain polynomials associated with $q_n$ and $p_R$. For this we need the estimation of the Komornik-Loreti constants $q_{KL}(M)$. Recall from Example \ref{ex:24} that
\[
q_{KL}(1)\approx 1.78723,\qquad q_{KL}(2)\approx 2.53595\qquad\textrm{and}\qquad q_{KL}(3)\approx 2.91002.
\]
We emphasize that $q_{KL}(M)\ge (M+2)/2$ for each $M\ge 1$, and  the map  $M\mapsto q_{KL}(M)$ is strictly increasing.

\begin{lemma}
\label{lem:polynomials}
Let {$[p_L, p_R]\subset(q_{KL}, M+1]$ be an entropy plateau such that $\al(p_L)=(a_1\ldots a_m)^\f$ and $\al(p_R)=a_1\ldots a_m^+(\overline{a_1\ldots a_m})^\f$.} Define the polynomials
\begin{align}
\begin{split}
P(x)&:=a_1 x+\dots +a_{m-1}x^{m-1}+(1+a_m^+)x^m\\
& \qquad\quad +(\overline{a_1}-a_1)x^{m+1}+\dots+(\overline{a_{m-1}}-a_{m-1})x^{2m-1}+(\overline{a_m}-a_m^+)x^{2m}-1
\end{split}
\label{eq:P}
\end{align}
and
\begin{equation}
\label{eq:Q}
Q_n(x):=P(x)-x^{m(n+1)}(\overline{a_1} x+\dots+\overline{a_m} x^m), \qquad n\in\N.
\end{equation} 
\begin{enumerate}[{\rm (i)}]
\item The number $1/p_R$ is the unique zero of $P$ in $[1/(M+1),1]$.
\item The number $1/q_n$ is the unique zero of $Q_n$ in $[1/(M+1),1]$, for all $n\in\N$.
\item $P'(x)\geq a_1$ for all $x\in [1/p_R,1/p_L]$.
\end{enumerate}
\end{lemma}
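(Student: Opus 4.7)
The plan for (i) is straightforward. I would substitute $\al(p_R)=a_1\ldots a_m^+(\overline{a_1\ldots a_m})^\infty$ into $1=\pi_{p_R}(\al(p_R))$, sum the geometric tail, and clear denominators; the resulting identity is exactly $P(1/p_R)=0$, and produces, as a by-product, the key factorization
\[
P(x)=(1-x^m)f(x),\qquad f(x):=\sum_{i=1}^\infty\al_i(p_R)x^i-1,
\]
which I will use heavily in (iii). A parallel geometric-series calculation for $\al(q_n)=(a_1\ldots a_m^+(\overline{a_1\ldots a_m})^{n-1}\overline{a_1\ldots a_m^+})^\infty$ then yields $P(1/q_n)=(1/q_n)^{m(n+1)}\sum_{i=1}^m\overline{a_i}(1/q_n)^i$, i.e., $Q_n(1/q_n)=0$, which gives (ii). In both cases, uniqueness of the root in $[1/(M+1),1]$ is immediate from strict monotonicity of $q\mapsto\pi_q((\om_i))$ on $(1,\infty)$ for any fixed non-zero sequence $(\om_i)\in\Omega$, since this already pins down $q$.

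For (iii), I would start by differentiating the factorization, obtaining
\[
P'(x)=(1-x^m)f'(x)-mx^{m-1}f(x).
\]
Because $f$ has nonnegative Taylor coefficients $\al_i(p_R)$, it is non-decreasing on $[0,1]$, and $f(1/p_R)=0$ forces $f\ge 0$ on $[1/p_R,1/p_L]$. The decisive elementary estimate is
\[
f'(x)=\sum_{i\ge 1}i\,\al_i(p_R)\,x^{i-1}\;\ge\;\frac{1}{x}\sum_{i\ge 1}\al_i(p_R)\,x^i=\frac{f(x)+1}{x}\qquad(x>0),
\]
which exploits only $i\ge 1$. Substituting and multiplying by $x$ yields
\[
x\,P'(x)\;\ge\;(1-x^m)+f(x)\bigl[1-(m+1)x^m\bigr].
\]
So proving $P'(x)\ge a_1$ on $[1/p_R,1/p_L]$ reduces to two clean inequalities: (a) $1-(m+1)x^m\ge 0$, making the second term nonnegative, and (b) $(1-x^m)/x\ge a_1$, i.e., $a_1x+x^m\le 1$.

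I expect (a) to be the main obstacle. Inequality (b) is easy: its left-hand side is increasing in $x$, and at $x=1/p_L$ the identity $\al(p_L)=(a_1\ldots a_m)^\infty$ forces $\sum_{i=1}^m a_i/p_L^i=1-1/p_L^m$, hence $a_1/p_L+1/p_L^m\le 1$. Inequality (a) reduces, via $x\le 1/p_L$, to the uniform lower bound $p_L^m\ge m+1$, which I would verify by a short case analysis using $p_L\ge q_{KL}(M)$, the constraint $M+m\ge 4$ from \eqref{eq:M-plus-m}, and the explicit values in Example \ref{ex:24}: the tightest case is $M=1,\ m=3$, handled by $q_{KL}(1)^3\approx 5.71\ge 4$, while in every other case $q_{KL}(M)\ge 2$ (for $M\ge 2$) makes $p_L^m\ge 2^m\ge m+1$ immediate.
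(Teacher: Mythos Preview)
Your arguments for (i) and (ii) are essentially the paper's: expand the relevant expansion of $1$, sum the geometric tail, and clear the $(1-x^m)$ denominator. The factorization $P(x)=(1-x^m)f(x)$ you record is exactly the identity behind the paper's ``expanding and rearranging''.

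For (iii) your route is genuinely different and cleaner. The paper differentiates $P$ term by term and then, separately for $m=1$ and $m\ge 2$, performs a somewhat delicate regrouping of the $2m$ terms using the identity $\sum_{i<m} a_i p_L^{-i}+a_m^+ p_L^{-m}=1$ and several sign checks on the coefficients $\overline{a_k}-a_k=M-2a_k$; the $m\ge 2$ case in particular involves a fairly intricate chain of inequalities. Your argument bypasses all of this: the factorization $P=(1-x^m)f$ together with the one-line estimate $xf'(x)\ge f(x)+1$ (valid because every Taylor coefficient of $f+1$ is multiplied by an integer $\ge 1$ when forming $xf'$) reduces $P'(x)\ge a_1$ to two inequalities that are monotone in $x$ and need only be checked at $x=1/p_L$, namely $p_L^m\ge m+1$ and $a_1/p_L+p_L^{-m}\le 1$. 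The second is immediate from $\sum_i a_i p_L^{-i}=1-p_L^{-m}$, and the first follows from $p_L\ge q_{KL}(M)$ and $M+m\ge 4$ exactly as you outline. A bonus is that your proof handles $m=1$ and $m\ge 2$ uniformly. One small point worth making explicit: for $M=1$ you should note that $q_{KL}(1)^m/(m+1)$ is increasing for $m\ge 3$ (since $\ln q_{KL}(1)>1/4\ge 1/(m+1)$), so that $m=3$ really is the tightest case.

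One caveat, unrelated to your method: the uniqueness claim in (ii) as stated is slightly off, since $Q_n(1)=P(1)-\sum_i\overline{a_i}=0$, so $x=1$ is always an additional root of $Q_n$. The intended statement is uniqueness in $[1/(M+1),1)$, and your monotonicity argument (like the paper's) proves exactly that. This does not affect the downstream use of the lemma, where only the root $1/q_n$ near $1/p_R$ is relevant.
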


\begin{proof}
(i) Since $\al(p_R)=a_1\dots a_m^+(\overline{a_1\dots a_m})^\infty$, it follows that $1/p_R$ is the unique solution in $[1/(M+1),1]$ of
\begin{align*}
1 &= a_1 x+a_2 x^2+\dots+a_{m-1}x^{m-1}+a_m^+x^m\\
& \qquad\qquad +x^m(\overline{a_1} x+\dots+\overline{a_m} x^m)+x^{2m}(\overline{a_1} x+\dots+\overline{a_m} x^m)+\dots\\
&= a_1 x+a_2 x^2+\dots+a_{m-1}x^{m-1}+a_m^+x^m+\frac{x^m(\overline{a_1} x+\dots+\overline{a_m} x^m)}{1-x^m}.
\end{align*}
Expanding and rearranging terms we see that $1/p_R$ is the unique zero in $[1/(M+1),1]$ of $P$.

(ii) By \eqref{e43}, it follows that the greedy expansion of $1$ in base $q_n$ is
\[
\beta(q_n)=a_1\dots a_m^+(\overline{a_1\dots a_m})^n 0^\infty,
\]
so $1/q_n$ is the unique root in $[1/(M+1),1]$ of the equation
\[
1=a_1 x+\dots+a_{m-1}x^{m-1}+a_m^+x^m+\frac{x^m(\overline{a_1} x+\dots+\overline{a_m} x^m)(1-x^{mn})}{1-x^m}.
\]
Expanding and rearranging gives that $1/q_n$ is the unique zero in $[1/(M+1),1]$ of $Q_n$.

(iii) Consider first the case $m=1$. In this case, the polynomial $P$ should be interpreted as 
\[
P(x)=(1+a_1^+)x+(\overline{a_1}-a_1^+)x^2-1.
\]
Now observe that, since $\al(p_L)=a_1^\f$, it follows that $p_L=a_1+1$. So for $x\in[1/p_R,1/p_L]$, we have in particular that $x\leq 1/(a_1+1)$. Therefore, since $a_1\geq (M+1)/2$,
\begin{align*}
P'(x)&=1+a_1^++2(\overline{a_1}-a_1^+)x=2+a_1+2(M-2a_1-1)x\\
&\geq 2+a_1+\frac{2(M-2a_1-1)}{a_1+1}=a_1+\frac{2(M+1)}{a_1+1}-2\\
&\geq a_1,
\end{align*}
where the last inequality follows since $a_1\leq M$.

Assume next that $m\geq 2$. Here we use that the greedy expansion of $1$ in base $p_L$ is $\beta(p_L)=a_1\dots a_m^+0^\f$, so 
\begin{equation}
\label{eq:pL-equation}
a_1 p_L^{-1}+\dots+a_{m-1}p_L^{-(m-1)}+a_m^+p_L^{-m}=1. 
\end{equation}
Hence,
\begin{equation}
\label{eq:poly-inequality}
a_1 x+\dots+a_{m-1}x^{m-1}+a_m^+x^m\leq 1 \qquad\mbox{for}\ 0\leq x\leq 1/p_L.
\end{equation}
Now for $0\leq x\leq 1/p_L$, writing $\overline{a_k}-a_k$ as $M-2a_k$, we have
\begin{align*}
P'(x)&=a_1+\sum_{k=2}^{m-1}ka_k x^{k-1}+m(1+a_m^+)x^{m-1}\\
&\qquad\qquad +\sum_{k=1}^{m-1}(m+k)(M-2a_k)x^{m+k-1}+2m(M-2a_m^++1)x^{2m-1}\\
&\geq a_1+\sum_{k=2}^{m-1}\left\{ka_k x^{k-1}+\big(M(m+k)-2(k-1)a_k\big)x^{m+k-1}\right\}\\
&\qquad\qquad +\left\{m(1+a_m^+)-2(m+1)\right\}x^{m-1}+Mx^m\{m+1+2mx^{m-1}\}\\
&\qquad\qquad +2\{m-(m-1)a_m^+\}x^{2m-1},
\end{align*}
where the inequality follows by multiplying both sides of \eqref{eq:poly-inequality} by $m+1$ and some algebraic manipulation. Here, the terms in the summation over $k=2,\dots,m-1$ are positive, since $a_k\leq M$ and so $M(m+k)-2(k-1)a_k\geq M(m-k+2)>0$. The sum of the remaining terms is increasing in $a_m^+$, since the coefficient of $a_m^+$ is 
$$mx^{m-1}-2(m-1)x^{2m-1}\geq mx^{m-1}(1-2x^m)\geq 0,$$ 
using that $m\geq 2$ and $x\leq 1/p_L\leq {1/q_{KL}(1)}\leq 0.6$, which holds for all $M\geq 1$. Since $a_m^+\geq 1$, it follows that
\begin{align*}
P'(x)&\geq a_1-2x^{m-1}+Mx^m\{m+1+2mx^{m-1}\}+2x^{2m-1}\\
&\geq a_1{-2x^{m-1}+M(m+1)x^m}=a_1+x^{m-1}\{M(m+1)x-2\}.
\end{align*}
At this point, we need that $x\geq 1/p_R\geq 1/(M+1)$. When $M\geq 2$, this implies
\[
M(m+1)x-2\geq 3Mx-2\geq \frac{3M}{M+1}-2=\frac{M-2}{M+1}\geq 0,
\]
recalling our assumption that $m\geq 2$. When $M=1$, we have $m\geq 3$ by \eqref{eq:M-plus-m}, and so $M(m+1)x-2\geq 4x-2\geq 0$, since $x\geq 1/2$. In both cases, it follows that $P'(x)\geq a_1$.
\end{proof}

The following elementary lemma (an easy consequence of the mean value theorem) is the key to the proof of the next inequality, in Lemma \ref{lem:46} below.

\begin{lemma} \label{lem:45}
Let $f:\R\to\R$ be a continuously differentiable function which has a zero  $x_0$, and let $\gamma>0$, $\delta>0$. Suppose $|f'(x)|\geq \gamma$ for all $x\in(x_0-\delta,x_0+\delta)$. If $g$ is a continuous function such that
\[
|g(x)-f(x)|\leq \gamma\delta \quad \mbox{for all} \quad x\in(x_0-\delta,x_0+\delta),
\]
then $g$ has at least one zero in $[x_0-\delta,x_0+\delta]$.
\end{lemma}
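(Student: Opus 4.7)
The plan is a routine combination of the mean value theorem (applied to $f$) and the intermediate value theorem (applied to $g$). The statement is essentially saying that if a smooth function $f$ crosses zero at $x_0$ with derivative bounded away from $0$ on a neighborhood, then any continuous perturbation of size at most $\gamma\delta$ must also have a zero nearby, because at the endpoints $x_0\pm\delta$ the function $f$ is pushed far enough from zero (by amount at least $\gamma\delta$) that the perturbed function still assumes values of opposite sign.

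First I would observe that since $f'$ is continuous on the open interval $(x_0-\delta,x_0+\delta)$ and $|f'|\geq\gamma>0$ there, $f'$ must have constant sign on this interval; in particular $f$ is strictly monotone on $[x_0-\delta,x_0+\delta]$, using continuity of $f$ to extend the monotonicity to the closed interval. Then, applying the mean value theorem on $[x_0,x_0+\delta]$ and on $[x_0-\delta,x_0]$, we get points $\xi_{\pm}$ with $f(x_0+\delta)-f(x_0)=\delta f'(\xi_+)$ and $f(x_0)-f(x_0-\delta)=\delta f'(\xi_-)$. Since $f(x_0)=0$ and $|f'(\xi_{\pm})|\geq\gamma$, this yields $|f(x_0+\delta)|\geq\gamma\delta$ and $|f(x_0-\delta)|\geq\gamma\delta$, with $f(x_0+\delta)$ and $f(x_0-\delta)$ of opposite sign by monotonicity.

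Next I would transfer this to $g$. By continuity, the bound $|g(x)-f(x)|\leq\gamma\delta$ extends to the closed interval $[x_0-\delta,x_0+\delta]$. Then $g(x_0+\delta)$ lies in $[f(x_0+\delta)-\gamma\delta,\,f(x_0+\delta)+\gamma\delta]$, and since $|f(x_0+\delta)|\geq\gamma\delta$, this interval lies entirely on the same side of $0$ as $f(x_0+\delta)$ (with $0$ possibly at its boundary). The same reasoning at $x_0-\delta$ gives that $g(x_0-\delta)$ lies weakly on the same side of $0$ as $f(x_0-\delta)$. Since $f(x_0-\delta)$ and $f(x_0+\delta)$ have opposite signs, so do $g(x_0-\delta)$ and $g(x_0+\delta)$ (in the weak sense allowing either to be $0$). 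By the intermediate value theorem applied to the continuous function $g$, there exists a zero of $g$ in $[x_0-\delta,x_0+\delta]$.

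There is no real obstacle here: the argument is at the level of a calculus exercise, and the only mild subtlety is checking that the hypothesis (stated on the open interval) extends to the endpoints by continuity so that we can actually apply IVT on a closed interval.
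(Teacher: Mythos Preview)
Your proof is correct and matches the paper's approach: the paper does not actually spell out a proof of this lemma, stating only that it is ``an easy consequence of the mean value theorem,'' which is precisely the argument you give (mean value theorem to bound $|f(x_0\pm\delta)|$ from below, then the intermediate value theorem for $g$).
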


\begin{lemma} \label{lem:46}
{For each $n\geq 1$,
\[
\frac{\log \varphi_{n+1}}{\log 2}<\frac{\log q_n}{\log p_R}.
\]
}
\end{lemma}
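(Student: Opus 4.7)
The plan is to exploit the transversality estimate $P'(x)\ge a_1>0$ on $[1/p_R,1/p_L]$ from Lemma \ref{lem:polynomials}(iii), together with the perturbative identity $Q_n(x) = P(x) - x^{m(n+1)}R(x)$, where $R(x):=\overline{a_1}x+\cdots+\overline{a_m}x^m$. Heuristically, this will show that $q_n$ approaches $p_R$ at the fast geometric rate $q_n^{-m(n+1)}$, whereas $\varphi_{n+1}$ approaches $2$ only at the slower rate $2^{-(n+1)}$ (derivable directly from the defining polynomial of $\varphi_{n+1}$). Since $p_L^m>2$ in every case---a fact following from the lower bounds on $q_{KL}(M)$ in Example \ref{ex:24} combined with the inequality $M+m\ge 4$ in \eqref{eq:M-plus-m}---the former rate dominates and will give the strict inequality.

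Concretely, since $P(1/p_R)=0$ and $P'\ge a_1$ on $[1/p_R,1/p_L]$, the mean value theorem together with Lemma \ref{lem:polynomials}(ii) (which yields $P(1/q_n)=q_n^{-m(n+1)}R(1/q_n)$) gives
\[
\frac{1}{q_n}-\frac{1}{p_R} \le C_1 \, q_n^{-m(n+1)}, \qquad C_1:=\frac{M}{a_1(p_L-1)},
\]
using the crude bound $R(1/q_n)\le M/(p_L-1)$. Rearranging and applying $\log(1+y)\le y$ then translates this into $\log(p_R/q_n)\le C_2\, q_n^{-m(n+1)}$ for an explicit constant $C_2$. For $\varphi_{n+1}$, summing the geometric series in its defining equation gives $\varphi_{n+1}^{n+2}=2\varphi_{n+1}^{n+1}-1$, hence $2/\varphi_{n+1}=1+\varphi_{n+1}^{-(n+2)}$. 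Since $\varphi_{n+1}^{-(n+2)}$ is bounded for $n\ge 1$, I obtain $\log(2/\varphi_{n+1}) \ge c_3\varphi_{n+1}^{-(n+2)}$ for some explicit $c_3>0$.

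The target inequality $\log\varphi_{n+1}/\log 2 < \log q_n/\log p_R$ is equivalent to $(\log p_R)\log(2/\varphi_{n+1}) > (\log 2)\log(p_R/q_n)$, so combining the two estimates reduces everything to the single inequality
\[
\frac{q_n^{m(n+1)}}{\varphi_{n+1}^{n+2}} > K,\qquad K:=\frac{C_2\log 2}{c_3\log p_R}.
\]
Since $q_n\ge p_L$ and one checks $p_L^m>2$ in each of the cases $M=1$ (where $m\ge 3$), $M=2$ (where $m\ge 2$), and $M\ge 3$ (where $m\ge 1$), the ratio $q_n^m/\varphi_{n+1}$ is bounded below by some $\rho>1$, so the left-hand side grows at least like $\rho^{n+1}/\varphi_{n+1}$.

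The hard part will be to verify the base case $n=1$ uniformly across all entropy plateaus, since $K$ depends on the plateau through $C_2$ and $p_R$ in a somewhat delicate way. I anticipate this requires tracking the constants $C_1$, $C_2$, and $c_3$ explicitly and splitting into the three cases above based on the minimum value of $m$. Once the base case is secured, the geometric growth established above handles all $n\ge 2$ automatically.
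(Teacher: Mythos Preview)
Your proposal is correct and follows essentially the same route as the paper: use the transversality bound $P'\ge a_1$ together with the mean value theorem to control $1/q_n-1/p_R$ by a constant times a power of $1/q_n$ (the paper uses $q_{KL}^{-m(n+1)}$), compare this geometric rate with the rate at which $\varphi_{n+1}\to 2$ obtained from its defining equation, and finish with a case split on $(M,m)$ using \eqref{eq:M-plus-m}. The only notable difference is that the paper first proves the sharper estimate $R(x)<1$ on $[0,1/p_L]$ (your $M/(p_L-1)$ is at most $2$, so roughly twice as large), which makes the final numerical verification a bit cleaner; the paper then carries out that verification uniformly in $n$ rather than isolating $n=1$ as a base case.
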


\begin{proof}
Set $\mu_n:=1/q_n$ for $n\geq 1$, and set $\mu^*:=1/p_R$. {Then $\mu_n>\mu^*$ for all $n\ge 1$.} We will use Lemma \ref{lem:45} to show that $\mu_n$ is sufficiently close to $\mu^*$. 

By Lemma \ref{lem:polynomials}, $\mu^*$ is the unique zero in $[1/(M+1),1]$ of the polynomial $P(x)$ from \eqref{eq:P}, and $\mu_n$ is the unique zero in $[1/(M+1),1]$ of the polynomial $Q_n(x)$ from \eqref{eq:Q}. Moreover,
\begin{equation}
\label{eq:slope-lower-bound}
P'(x)\geq a_1\geq \frac{M+1}{2} \qquad \textrm{for all}\quad \mu^*\leq x\leq 1/p_L.
\end{equation}
In order to estimate the difference $P(x)-Q_n(x)$, we show first that
\begin{equation}
\label{eq:poly-estimate}
\overline{a_1}x+\dots+\overline{a_m}x^m<1 \qquad \textrm{for all}\quad 0\leq x\leq 1/p_L.
\end{equation}
Observe that
\[
\overline{a_1}x+\dots+\overline{a_m}x^m=\frac{Mx(1-x^m)}{1-x}-(a_1 x+\dots+a_m x^m).
\]
Hence, recalling \eqref{eq:pL-equation}, we have for $0\leq x\leq 1/p_L$,
\begin{align*}
\overline{a_1}x+\dots+\overline{a_m}x^m \leq \overline{a_1}p_L^{-1}+\dots+\overline{a_m}p_L^{-m} &=\frac{M(1-p_L^{-m})}{p_L-1}-(1-p_L^{-m})\\
&=(1-p_L^{-m})\left(\frac{M}{p_L-1}-1\right)\\
&\leq 1-p_L^{-m}<1,
\end{align*}
where the next-to-last inequality follows since $p_L\geq q_{KL}{(M)}\geq (M+2)/2$.  
This proves \eqref{eq:poly-estimate}.

Recall our convention that logarithms are taken with respect to base $M+1$. Below, we write $\ln x$ for the natural logarithm of $x$.
Suppose we can show, for some number $\delta_n>0$, that
\begin{equation}
\label{eq:delta-bound}
\mu_n-\mu^* \leq \delta_n.
\end{equation}
Using the inequality $\ln(1+x)\le x$ for any $x>-1$, it then follows that
\[
\ln\mu_n-\ln\mu^*=\ln\left(1+\frac{\mu_n-\mu^*}{\mu^*}\right)\leq\frac{\mu_n-\mu^*}{\mu^*}\leq\frac{\delta_n}{\mu^*}=\delta_n p_R,
\]
and so
\begin{equation}
\label{e48}
\frac{\ln q_n}{\ln p_R}=1+\frac{\ln q_n-\ln p_R}{\ln p_R}=1-\frac{\ln\mu_n-\ln\mu^*}{\ln p_R}\geq 1-\frac{\delta_n p_R}{\ln p_R}.
\end{equation}

{Next, observe that {$\varphi_{n+1}^{n+1}(1-\varphi_{n+1})=1-\varphi_{n+1}^{n+1}$, whence $\varphi_{n+1}^{n+1}(2-\varphi_{n+1})=1$}. It follows that
\[
2-\varphi_{n+1}=\varphi_{n+1}^{-(n+1)}>2^{-(n+1)},
\]
and hence,
\[
\ln \varphi_{n+1}-\ln 2=\ln\left(1+\frac{\varphi_{n+1}-2}{2}\right)\leq \frac{\varphi_{n+1}-2}{2}<-\frac{1}{2^{n+2}}.
\]
This gives
\begin{equation}
\label{e49}
\frac{\ln\varphi_{n+1}}{\ln 2}<1-\frac{1}{2^{n+2}\ln 2}.
\end{equation}

In view of \eqref{e48} and \eqref{e49} and the change-of-base formula $\ln x=\ln(M+1)\cdot \log x$, it then remains to show that
\begin{equation}
\label{e410}
\frac{\delta_n p_R}{\log p_R}<\frac{1}{2^{n+2}\log 2}\qquad{\textrm{for each}\quad n\ge 1}.
\end{equation}

By \eqref{eq:poly-estimate} and (\ref{eq:Q}) we have
\begin{equation*}
0\leq P(x)-Q_n(x)\leq p_L^{-m(n+1)}\leq q_{KL}^{-m(n+1)}, \qquad x\in[0,1/p_L]. 
\end{equation*}
Since we know that $\mu_n\in[\mu^*,1/p_L]$ and moreover, $\mu_n$ is the unique root of $Q_n$ in $[1/(M+1),1]$, it follows from \eqref{eq:slope-lower-bound} and Lemma \ref{lem:45} (with $\gamma=(M+1)/2$) that \eqref{eq:delta-bound} holds with 
\[
\delta_n=\frac{2}{M+1}q_{KL}^{-m(n+1)}.
\]

(i) Assume first that $m\geq 2$. Then we can estimate
\begin{align}
\begin{split}
\left(2^{n+2}\log 2\right)\frac{\delta_n p_R}{\log p_R} &\leq 2\log 2 \cdot \frac{2}{M+1}\cdot \frac{M+1}{\log q_{KL}}\left(\frac{2}{q_{KL}^m}\right)^{n+1}\\
&=\frac{4\log 2}{\log q_{KL}}\left(\frac{2}{q_{KL}^m}\right)^{n+1},
\end{split}
\label{eq:fraction-estimate}
\end{align}
where the inequality follows since $p_R\leq M+1$ and $\log p_R\geq\log q_{KL}$. Now observe that $\log 2/\log q_{KL}\leq {\log 2/\log q_{KL}(1)\le }\log 2/\log 1.787<1.2$. Furthermore, if $M\geq 2$ then $2/q_{KL}^m{\le 2/(q_{KL}(2))^2\le 2/(2.5)^2}<0.33$; and if $M=1$, then $m\geq 3$ by \eqref{eq:M-plus-m} and so $2/q_{KL}^m\leq 2/(1.787)^3<0.36$. In both cases, it follows that
\[
\left(2^{n+2}\log 2\right)\frac{\delta_n p_R}{\log p_R}\leq (4.8)(0.36)^{n+1}\leq (4.8)(0.36)^2<1,
\]
for all $n\geq 1$. Thus, we have proved \eqref{e410} in the case $m\geq 2$.

(ii) Assume next that $m=1$, so $M\geq 3$ by \eqref{eq:M-plus-m}. In this case, the bound in \eqref{eq:fraction-estimate} is just too large for $n=1$. But we can use the easily verified fact that the function $x\mapsto x/\log x$ is increasing on $[e,\infty)$ and $p_R\geq q_{KL}(3)\ge 2.9>e$, to replace the factor $\log q_{KL}$ in \eqref{eq:fraction-estimate} with the sharper $\log(M+1)$. Since $\log(M+1)\geq\log 4=2\log 2$, this gives the estimate
\begin{align*}
\left(2^{n+2}\log 2\right)\frac{\delta_n p_R}{\log p_R} &\leq 2\log 2 \cdot \frac{2}{M+1}\cdot \frac{M+1}{\log(M+1)}\left(\frac{2}{q_{KL}}\right)^{n+1}\\
&\leq 2\left(\frac{2}{q_{KL}}\right)^2\leq 2\left(\frac{2}{2.9}\right)^2\approx .9512<1.
\end{align*}
}

In both cases above, we have found a $\delta_n$ such that \eqref{eq:delta-bound} holds, and proved \eqref{e410}. Therefore, the proof of the Lemma is complete.
\end{proof}

\begin{proof}[Proof of the upper bound in Theorem \ref{thm:41}]
By Lemmas \ref{lem:44} and \ref{lem:46}, we have
\[
\dim_H(\overline{\ub}\cap[q_n, q_{n+1}]) {<} \frac{\log 2}{m\log p_R} \qquad\mbox{for each $n\geq 1$}.
\]
Since $\overline{\ub}\cap(p_L,p_R) \subseteq \bigcup_{n=1}^\infty (\overline{\ub}\cap[q_n, q_{n+1}])$, it follows from the countable stability of Hausdorff dimension that
\[
\dim_H(\overline{\ub}\cap[p_L,p_R]) \le \sup_{n\ge 1} \dim_H(\overline{\ub}\cap[q_n, q_{n+1}]) \le \frac{\log 2}{m\log p_R},
\]
establishing the upper bound.
\end{proof}

\begin{remark}\label{rem:47}
{
The above method of proof shows that in fact, for any $\ep>0$ we have $\dim_H(\overline{\ub}\cap[p_L,p_R-\ep])< \dim_H(\overline{\ub}\cap[p_L,p_R])$ and therefore, 
\[\dim_H(\overline{\ub}\cap[p_R-\ep,p_R])=\dim_H(\overline{\ub}\cap[p_L,p_R])=\frac{\log 2}{m\log p_R}\] for any $\ep>0$. Thus, one could say that within an entropy interval $[p_L,p_R]$, $\overline{\ub}$ is ``thickest" near the right endpoint $p_R$. 
}
\end{remark}

\begin{proof}[Proof of Theorem \ref{main:4}]
Since $\ub\setminus\bb\subset [q_{KL}(M),{M+1}]$, by (\ref{e15}) we have
  $\ub\setminus\bb=\set{q_{KL}}\cup\bigcup(\ub\cap[p_L, p_R])$, where the union is pairwise disjoint and countable. Then
  \begin{equation}\label{e411}
  \dim_H(\ub\setminus\bb)=\dim_H \bigcup_{[p_L, p_R]}(\ub\cap[p_L, p_R])=\sup_{[p_L, p_R]}\dim_H(\ub\cap[p_L, p_R]).
  \end{equation}
  Here the supremum is taken over all entropy plateaus $[p_L, p_R]{\subset(q_{KL}(M), M+1]}$. 
	
	Assume first that $M=1$. Recall that for any entropy plateau $[p_L, p_R]\subseteq(q_{KL}{(1)}, 2]$ with $\al(p_L)=(a_1\ldots a_m)^\f$, it holds that $m\geq 3$. Furthermore, $m=3$ if and only if $[p_L, p_R]=[\la_*, \la^*]\approx [1.83928, 1.87135]$, where
  $\al(\la_*)=(110)^\f$ {and} $\al(\la^*)=111(001)^\f$.  {Observe that $q_{KL}(1)\approx 1.78723$}. By a direct calculation one can verify that for any $m\ge 4$ we have
  \begin{equation}\label{e412}
  \frac{\log 2}{m\log p_R}<\frac{\log 2}{4\log q_{KL}}<\frac{\log 2}{3\log \lambda^*}.
  \end{equation}
 Therefore, by (\ref{e411}), (\ref{e412}) and Theorem \ref{thm:41} it follows that
  \begin{equation*}
  \dim_H(\ub\setminus\bb)
  =\dim_H(\ub\cap[\la_*, \la^*])
	=\frac{\log 2}{3\log \la^*}\approx 0.368699.
  \end{equation*}
{Finally, since $\al(\la^*)=111(001)^\f$,  $\la^*$ is the unique root in $(1,2]$ of the equation 
	\[
	1=\frac{1}{x}+\frac{1}{x^2}+\frac{1}{x^3}+\frac{1}{x^3(x^3-1)},
	\]
	or equivalently, $x^5-x^4-x^3-2x^2+x+1=0$.}
	
Consider next the case $M=2$. Then $m\geq 2$, with equality if and only if $[p_L,p_R]=[\ga_*,\ga^*]\approx [2.73205,2.77462]$, where $\al(\ga_*)=(21)^\f$ and $\al(\ga^*)=22(01)^\f$. For any entropy plateau $[p_L,p_R]$ with period $m\geq 3$, we have $m\log p_R\geq 3\log q_{KL}{(2)}\geq 3\log {2.5}>2\log 3>2\log \ga^*$, so
\[
\frac{\log 2}{m\log p_R}<\frac{\log 2}{2\log\ga^*}.
\]
Hence, by \eqref{e411} and Theorem \ref{thm:41}, 
\[\dim_H(\ub\setminus\bb)=\dim_H(\ub\cap[\ga_*, \ga^*])=\frac{\log 2}{2\log\ga^*}\approx 0.339607.
\]
Furthermore, {since $\al(\ga^*)=22(01)^\f$, $\ga^*$ is the unique root in $(2,3)$ of the equation
\[
1=\frac{2}{x}+\frac{2}{x^2}+\frac{1}{x^2(x^2-1)},
\]
or equivalently,} $\ga^*$ is the unique root in $(2,3)$ of $x^4-2x^3-3x^2+2x+1=0$.
	
Finally, let $M\geq 3$. The leftmost entropy plateau with period $m=1$ is $[p_L,p_R]$, where 
\begin{alignat*}{3}
M &={2k+1} \quad &\Rightarrow &\quad \al(p_L)=(k+1)^\f \quad\mbox{and}\quad \al(p_R)=(k+2)k^\f,\\
M &={2k} \quad &\Rightarrow &\quad \al(p_L)=(k+1)^\f \quad\mbox{and}\quad \al(p_R)=(k+2)(k-1)^\f.
\end{alignat*}
Note that for this entropy plateau, $p_R=q_\star(M)$, where $q_\star(M)$ was defined in \eqref{e17}. Now consider an arbitrary entropy plateau $[p_L,p_R]$ with period $m$. If $m=1$, then $p_R\geq q_\star(M)$, so ${m\log p_R}\geq \log q_\star(M)$. And if $m\geq 2$, we have
\begin{align*}
m\log p_R&\geq 2\log q_{KL}{(M)}\geq 2\log\left(\frac{M+2}{2}\right)=\log(M^2+4M+4)-\log 4\\
&\geq \log(4M+4)-\log 4=\log(M+1)>\log q_\star(M).
\end{align*}
In both cases, we obtain
\[
\frac{\log 2}{m\log p_R}\leq \frac{\log 2}{\log q_\star(M)}.
\]
Hence, by \eqref{e411} and Theorem \ref{thm:41}, $\dim_H(\ub\setminus\bb)=\log 2/\log q_\star(M)$. This completes the proof.
\end{proof}

\section*{Acknowledgments}
{The authors thank the anonymous referee for many useful comments.} {The second author} was supported by the EPSRC grant EP/M001903/1. {The third author} was supported by NSFC No.~11401516.


\begin{thebibliography}{10}

\bibitem{AlcarazBarrera-Baker-Kong-2016}
R.~Alcaraz~Barrera, S.~Baker, and D.~Kong.
\newblock Entropy, topological transitivity, and dimensional properties of
  unique q-expansions.
\newblock {\em arXiv:1609.02122}, 2016. To appear in Trans. Amer. Math. Soc.

\bibitem{Allaart-2017}
P.~C. Allaart.
\newblock On univoque and strongly univoque sets.
\newblock {\em Adv. Math.}, 308:575--598, 2017.

\bibitem{Allouche_Shallit_1999}
J.-P. Allouche and J.~Shallit.
\newblock The ubiquitous {P}rouhet-{T}hue-{M}orse sequence.
\newblock In {\em Sequences and their applications ({S}ingapore, 1998)},
  Springer Ser. Discrete Math. Theor. Comput. Sci., pages 1--16. Springer,
  London, 1999.

\bibitem{Baiocchi_Komornik_2007}
C.~Baiocchi and V.~Komornik.
\newblock Greedy and quasi-greedy expansions in non-integer bases.
\newblock {\em arXiv:0710.3001v1}, 2007.

\bibitem{Baker-2014}
S. Baker. 
\newblock Generalized golden ratios over integer alphabets. 
\newblock {\em Integers}, 14, Paper No. A15, 28 pp., 2014.

\bibitem{Bon-Car-Ste-Giu-2013}
C.~Bonanno, C.~Carminati, S.~Isola, and G.~Tiozzo.
\newblock Dynamics of continued fractions and kneading sequences of unimodal
  maps.
\newblock {\em Discrete Contin. Dyn. Syst.}, 33(4):1313--1332, 2013.

\bibitem{Daroczy_Katai_1993}
Z.~Dar{\'o}czy and I.~K{\'a}tai.
\newblock Univoque sequences.
\newblock {\em Publ. Math. Debrecen}, 42(3-4):397--407, 1993.

\bibitem{DeVries_Komornik_2008}
M.~de~Vries and V.~Komornik.
\newblock Unique expansions of real numbers.
\newblock {\em Adv. Math.}, 221(2):390--427, 2009.

\bibitem{Vries-Komornik-Loreti-2016}
M.~de~Vries, V.~Komornik, and P.~Loreti.
\newblock Topology of the set of univoque bases.
\newblock {\em Topology Appl.}, 205:117--137, 2016.

\bibitem{Erdos_Joo_Komornik_1990}
P.~Erd\H{o}s, I.~Jo\'{o}, and V.~Komornik.
\newblock Characterization of the unique expansions $1=\sum_{i=1}^\infty
  q^{-n_i}$ and related problems.
\newblock {\em Bull. Soc. Math. France}, 118:377--390, 1990.

\bibitem{Erdos_Horvath_Joo_1991}
P.~Erd{\H{o}}s, M.~Horv{\'a}th, and I.~Jo{\'o}.
\newblock On the uniqueness of the expansions {$1=\sum q^{-n_i}$}.
\newblock {\em Acta Math. Hungar.}, 58(3-4):333--342, 1991.

\bibitem{Falconer_1990}
K.~Falconer.
\newblock {\em Fractal geometry: Mathematical foundations and applications.}
\newblock John Wiley \& Sons Ltd., Chichester, 1990.

\bibitem{Gle-Sid-01}
P.~Glendinning and  N.~Sidorov.
\newblock {\em Unique representations of real numbers in non-integer bases.}
\newblock {\em Math. Res. Lett.}, 8(4):535--543, 2001.


\bibitem{Jordan-Shmerkin-Solomyak-2011}
T.~Jordan, P.~Shmerkin, and B.~Solomyak.
\newblock Multifractal structure of {B}ernoulli convolutions.
\newblock {\em Math. Proc. Cambridge Philos. Soc.}, 151(3):521--539, 2011.

\bibitem{Kalle-Kong-Li-Lv-2016}
C.~Kalle, D.~Kong, W.~Li, and F.~L\"{u}.
\newblock On the bifurcation set of unique expansions.
\newblock {\em arXiv:1612.07982}, 2016.

\bibitem{Komornik_Kong_Li_2015_1}
V.~Komornik, D.~Kong, and W.~Li.
\newblock Hausdorff dimension of univoque sets and devil's staircase.
\newblock {\em Adv. Math.}, 305:165--196, 2017.

\bibitem{Komornik_Loreti_2002}
V.~Komornik and P.~Loreti.
\newblock Subexpansions, superexpansions and uniqueness properties in
  non-integer bases.
\newblock {\em Period. Math. Hungar.}, 44(2):197--218, 2002.

\bibitem{Komornik_Loreti_2007}
V.~Komornik and P.~Loreti.
\newblock On the topological structure of univoque sets.
\newblock {\em J. Number Theory}, 122(1):157--183, 2007.

\bibitem{Kong_Li_2015}
D.~Kong and W.~Li.
\newblock Hausdorff dimension of unique beta expansions.
\newblock {\em Nonlinearity}, 28(1):187--209, 2015.

\bibitem{Kong_Li_Dekking_2010}
D.~Kong, W.~Li, and F.~M. Dekking.
\newblock Intersections of homogeneous {C}antor sets and beta-expansions.
\newblock {\em Nonlinearity}, 23(11):2815--2834, 2010.

{\bibitem{Kong_Li_Lv_Vries2016}
D.~Kong, W.~Li, F.~L\"{u}, and M.~de~Vries.
\newblock Univoque bases and Hausdorff dimension.
\newblock {\em Monatsh. Math.}, 184(3):443--458, 2017. }

\bibitem{Lind_Marcus_1995}
D.~Lind and B.~Marcus.
\newblock {\em An introduction to symbolic dynamics and coding}.
\newblock Cambridge University Press, Cambridge, 1995.

\bibitem{Mauldin_Williams_1988}
R.~D. Mauldin and S.~C. Williams.
\newblock Hausdorff dimension in graph directed constructions.
\newblock {\em Trans. Amer. Math. Soc.}, 309(2):811--829, 1988.

\bibitem{Parry_1960}
W.~Parry.
\newblock On the $\beta$-expansions of real numbers.
\newblock {\em Acta Math. Acad. Sci. Hungar.}, 11:401--416, 1960.

\bibitem{Pol-Sim-95}
{M.~Pollicott and K.~Simon.
\newblock The Hausdorff dimension of $\lambda$-expansions with deleted digits. 
\newblock{\em Trans. Amer. Math. Soc.},  347 (3):967--983, 1995.
}
\bibitem{Sidorov_2007}
N.~Sidorov.
\newblock Combinatorics of linear iterated function systems with overlaps.
\newblock {\em Nonlinearity}, 20(5):1299--1312, 2007.

\bibitem{Sol-95}
{B.~Solomyak.
\newblock On the random series $\sum \pm\lambda^i$ (an Erd\H os problem).
\newblock {\em Ann. of Math.},  142: 611--625, 1995.
}
\end{thebibliography}

\end{document}